\newtheorem{theorem}{Theorem}[section]
\newtheorem{lemma}[theorem]{Lemma}
\newtheorem{proposition}[theorem]{Proposition}
\newtheorem{corollary}[theorem]{Corollary}
\newtheorem{claim}[theorem]{Claim}
\theoremstyle{definition}
\newtheorem{definition}[theorem]{Definition}
\newtheorem{example}[theorem]{Example}
\theoremstyle{remark}
\newtheorem{remark}[theorem]{Remark}
\numberwithin{equation}{section}
\newcommand{\absval}[1]{\mbox{$|#1|$}}
\def\R{{\mathbb R}}
\newcommand{\cQ }{\mathcal Q}
\newcommand{\cF }{\mathcal F}
\newcommand{\cR }{\mathcal R}
\newcommand{\ccR }{\mathfrak{R}}
\newcommand{\x }{{\bf x}}
\newcommand{\norm}[1]{\mbox{$\left\| #1 \right\|$}}
\newcommand{\BMO}[0]{\operatorname{BMO}}
\def\Xint#1{\mathchoice
  {\XXint\displaystyle\textstyle{#1}}%
  {\XXint\textstyle\scriptstyle{#1}}%
  {\XXint\scriptstyle\scriptscriptstyle{#1}}%
  {\XXint\scriptscriptstyle\scriptscriptstyle{#1}}%
  \!\int}
\def\XXint#1#2#3{{\setbox0=\hbox{$#1{#2#3}{\int}$}
    \vcenter{\hbox{$#2#3$}}\kern-.5\wd0}}
\def\avgint{\Xint-}
\newcommand{\vertiii}[1]{{\left\vert\kern-0.25ex\left\vert\kern-0.25ex\left\vert #1 
    \right\vert\kern-0.25ex\right\vert\kern-0.25ex\right\vert}}
\numberwithin{equation}{section}
\definecolor{caro}{rgb}{0.0, 0.5, 0.0}
\begin{document}

 \title[ Self-improving on rectangles]{Self-improving Poincar\'e-Sobolev type functionals in product spaces}

\author[M.E. Cejas]{Mar\'ia Eugenia Cejas}
\address[Mar\'ia Eugenia Cejas]{Departamento de Matem\'atica, Facultad de Ciencias Exactas, Universidad Nacional de La Plata, Calle 50 y 115, (1900) La Plata, Prov. de Buenos Aires, Argentina.}
 \email{ecejas@mate.unlp.edu.ar}

\author[C. Mosquera]{Carolina Mosquera}
\address[Carolina Mosquera]{Department of Mathematics,
Facultad de Ciencias Exactas y Naturales,
University of Buenos Aires, Ciudad Universitaria
Pabell\'on I, Buenos Aires 1428 Capital Federal Argentina} \email{mosquera@dm.uba.ar}

\author[C. P\'erez]{Carlos P\'erez}
\address[Carlos P\'erez]{ Department of Mathematics, University of the Basque Country, IKERBASQUE 
(Basque Foundation for Science) and
BCAM \textendash  Basque Center for Applied Mathematics, Bilbao, Spain}
\email{cperez@bcamath.org}

\author[E. Rela]{Ezequiel Rela}
\address[Ezequiel Rela]{Department of Mathematics,
Facultad de Ciencias Exactas y Naturales,
University of Buenos Aires, Ciudad Universitaria
Pabell\'on I, Buenos Aires 1428 Capital Federal Argentina} \email{erela@dm.uba.ar}

\thanks{M.E.C. is partially supported by grant PICT-2018-03017 (ANPCYT)}
\thanks{C.M. is partially supported by grants UBACyT 20020170100430BA, PICT 2018--03399 and PICT 2018--04027.}
\thanks{C. P. is supported by grant  MTM2017-82160-C2-1-P of the Ministerio de Econom\'ia y Competitividad (Spain), grant IT-641-13 of the Basque Government, and IKERBASQUE}
\thanks{E.R. is partially supported by grants UBACyT  20020170200057BA, PIP (CONICET) 11220110101018, by the Basque Government through the BERC 2014-2017 program, and by the Spanish Ministry of Economy and Competitiveness MINECO: BCAM Severo Ochoa accreditation SEV-2013-0323.
This project has received funding from the European
Union's Horizon 2020 research and innovation programme under the Marie Sklodowska-Curie grant agreement No 777822. }
\subjclass{Primary: 42B25. Secondary: 42B20.}

\keywords{Poincar\'e - Sobolev inequalities. Muckenhoupt weights.}

\begin{abstract}
In this paper we give a geometric condition which ensures that  $(q,p)$-Poincaré-Sobolev  inequalities are implied from  generalized $(1,1)$-Poincaré inequalities related to $L^1$ norms in the context of product spaces. The concept of eccentricity plays a central role in the paper. We provide several $(1,1)$-Poincar\'e type inequalities adapted to different geometries and then show that our selfimproving method can be applied to obtain special interesting Poincar\'e-Sobolev estimates. Among other results, we prove that  for each rectangle $R$
of the form $R=I_1\times I_2 \subset \mathbb{R}^{n}$ where $I_1\subset \mathbb{R}^{n_1}$ and $I_2\subset \mathbb{R}^{n_2}$ are cubes with sides parallel to the coordinate axes, we have that
\begin{equation*}
\left(  \frac{1}{w(R)}\int_{ R }   |f -f_{R}|^{p_{\delta,w}^*}     \,wdx\right)^{\frac{1}{p_{\delta,w}^*}} \leq 
c\,(1-\delta)^{\frac1p}\,[w]_{A_{1,\ccR}}^{\frac1p}\, \Big(a_1(R)+a_2(R)\Big),
\end{equation*}
where $\delta \in (0,1)$,  $w \in A_{1,\ccR}$,  $\frac{1}{p} -\frac{1}{ p_{\delta,w}^* }=   \frac{\delta}{n} \, \frac{1}{1+\log [w]_{A_{1,\ccR}}}$
and  $a_i(R)$ are bilinear analog of the fractional  Sobolev seminorms $[u]_{W^{\delta,p}(Q)}$ (See Theorem \ref{thm:BBMbiparametrico}). 
This is a biparameter weighted version of the celebrated fractional Poincaré-Sobolev estimates with the gain $(1-\delta)^{\frac1p}$ due to Bourgain-Brezis-Minorescu.
\end{abstract}
\maketitle

\normalem


\section{Introduction and Background }

Poincaré and Poincaré-Sobolev inequalities have been studied extensively in a wide variety of scenarios, including the general theory for convex sets. The particular case of cubes is specially interesting, once we have a good result on cubes there are mechanisms  (sometimes chaining methods) to pass to more general domains, see for instance Chapter 9 from the very recent manuscript \cite{KLV} (also \cite{HKST}). See also 
the last part  of Section 5 in \cite{FPW98} to see the connection with weights.
The classical technique for getting this kind of inequalities is through the use of a
representation formula in terms of a fractional integral. An approach which avoids the use of any representation formula to obtain Poincaré-Sobolev inequalities on cubes (or balls) was introduced in \cite{FPW98} and then sharpened in 
\cite{MacManus-Perez-98}. This approach merges with the John-Nirenberg theory of the $\BMO$ spaces and others under the phenomenon of the so called self-imroving property.  
However this ``abstract" general theory has not been developed in the context of rectangles or, more generally, product spaces as was considered for first time in \cite{ShiTor} and later in \cite{LuWheeden}, which will be the main purpose of this article. In this different scenario it is reasonable to expect some sensitivity with respect to very thin rectangles which is reflected in the dependence on the notion of {\it eccentricity}: roughly, a quantity that indicates how far the given object is from being a cube. This notion will be essential to our results and we will properly define it later.

We will study generalized  weighted Poincar\'e and Poincaré-Sobolev type inequalities related to two product space settings.  The most general possible situation  is the case of the family  of rectangles in $\mathbb{R}^n$ seen as $n$-fold product of intervals on $\mathbb{R}$, denoted by $\cR$.
We will study a variant of this case by considering rectangles $R$ of the form $R=I_1\times I_2 \subset \mathbb{R}^{n}$ where $I_1\subset \mathbb{R}^{n_1}$ and $I_2\subset \mathbb{R}^{n_2}$ are cubes with sides parallel to the coordinate axes, denoted by $\ccR$. We will deal with these two situations separately  to simplify the presentation. Our approach consists in starting with some generalized unweighted Poincaré inequality with respect to an $L^1$ mean oscillation and from there we will improve it in order to gain higher integrability with the minimum loss in the involved constants.

The motivation to study this situation comes from the following very interesting $(1,1)$-Poincar\'e inequality valid for any 
bounded convex domain $\Omega\subset \mathbb{R}^n$, 
\begin{equation}\label{eq:Poincare-(1,1)}
\avgint_\Omega |f-f_\Omega|dx \leq \frac12\, d(\Omega) \avgint_\Omega |\nabla f|dx,
\end{equation}
where  we used $f_\Omega$ and $\avgint_\Omega f \ dx$ to denote the average of the function $f$ over the set $\Omega$ and $d(\Omega)$ stands for the diameter of the set $\Omega$. This result is due to G. Acosta and R. Dur\'an in \cite{AD04} where the constant $\frac12$ in front is best possible. Of course, it is a classical well-known fact that this estimate holds for any convex set $\Omega$ in $\mathbb{R}^n$ with a larger constant in front. 
In particular,  \eqref{eq:Poincare-(1,1)}  holds in the case of $\Omega$ being a rectangle of $\mathbb{R}^n$ with sides parallel to the axes.
The key idea here is that inequality \eqref{eq:Poincare-(1,1)} enjoys a sort of self-improving property, meaning that we can reach higher integrability and therefore obtain $(p,p)$-Poincar\'e and $(p^*,p)$-Poincar\'e-Sobolev type inequalities, with $p^*>p$, both in the unweighted and the weighted settings. This self-improving phenomena is not new and can be traced back to \cite{SC,HaK} in a more classical setting, but in a more general abstract way in \cite{FPW98}  improved in \cite{MacManus-Perez-98}. A strengthened version of the results from these papers 
can be found in \cite{PR-Poincare}  where much more precise $L^q-L^p$ weighted Poincar\'e-Sobolev inequalities over cubes in $\mathbb{R}^n$ were obtained. To be more precise 
it is remarkable that  the self-improving property is not so much related to the presence of a gradient on the right hand side of \eqref{eq:Poincare-(1,1)}, but to a discrete summation condition associated to the functional that it defines.  
In fact, we will consider as a starting point estimates of the form

\begin{equation}\label{eq:general-starting-a(R)}
\avgint_R |f-f_R|dx\le a(R)\qquad R\in \cF
\end{equation}
where $\cF$ is a given family of sets and $a:\cF \to (0,\infty)$ is some general functional with no restriction. Typical examples of $\cF$ are the family of cubes $\cQ$, the families of rectangles from $\cR$ or $\ccR$, or any  other collection of sets. The key difference when moving from the basis $\cR$ to the basis $\ccR$ is not on the selfimproving phenomena, as it will be clear from the proofs. The issue will be to find the appropriate analogue for \eqref{eq:general-starting-a(R)}, where the functional $a(R)$ will be replaced by a \emph{sum} of two functionals $a_1(I)+a_2(J)$, with $R=I\times J$. Part of the effort will be devoted to prove these starting points where the oscillation is controlled by a sum of functionals.

This kind of generalized Poincaré inequality with a general abstract functional acting on balls (or cubes) 
over a metric (or quasi-metric) space with a doubling measure (spaces of homogeneous type) was considered for first time in \cite{FPW98}.  
A special geometric condition, denoted by $D_p$, was introduced with the idea of deriving self-improving properties  of the given mean oscillation of the function on the left hand side.  This condition has been recently refined in \cite{PR-Poincare}, denoted $SD_p$, which produces much more accurate results.
We introduce both conditions $D_p$ and $SD_p$ for the families $\cR$ and $\ccR$  in Definition \ref{def:Dp(w)} and Definition \ref{def:smallness} below
 in the context of rectangles.  We will make special emphasis on the $SD_p$ type condition since it is highly efficient allowing to improve the estimates obtained in \cite{FPW98} in a much more precise way.  
   
To present here a preview of the main contributions of this paper, consider the particular case of $a(R)$ being the average of the $L^1$ norm of the gradient of $f$, namely $a(R)=d(R) \avgint_R |\nabla f|$. The main goal here would be, starting from an inequality of the form \eqref{eq:general-starting-a(R)} and using certain discrete summability conditions satisfied by the functional $a$, to obtain an inequality involving $L^q-L^p$ norms of the form
\begin{equation*}
\left (\frac{1}{w(R)}\int_R|f-f_R|^{q}w\right )^\frac{1}{q}\le C_w\, d(R)\left (\frac{1}{w(R)}\int_R |\nabla f|^p w\right )^\frac{1}{p}
\end{equation*}
where $w$ is a certain weight. We will try to reach the best possible $q$ but keeping controlled the constant $C_w$ 
extending and improving the results in \cite{FPW98} and  \cite{PR-Poincare}.

The precise definition that we need is the following. As usual, we will say that a \emph{weight} is a non negative locally integrable function defined on $\mathbb{R}^n$. Also, the Lebesgue measure for a measurable set $\Omega$ in $\mathbb{R}^n$ is denoted by $|\Omega|$.

\begin{definition} \label{def:smallness}
For a given weight $w$, and $s>0$, we say that a functional $a\in SD_{p,\cR}^{s}(w)$ for $0< p<+\infty$ if there is a constant $c$ such that for any family 
of disjoint \emph{dyadic} subrectangles $\left\lbrace R_i\right\rbrace_i $ of $R$ the following inequality holds
\begin{equation}\label{eq:SDp}
\left( \sum_i a(R_i)^p \frac{w(R_i)}{w(R)}  \right)^{\frac1p} \leq c 
\left(\frac{|\bigcup_iR_i|}{|R|} \right )^{ \frac1s }
a(R).
\end{equation}
\end{definition}
The best possible constant $c$ (the infimum of the constants in last inequality) is denoted by 
$\|a\|_{SD_{p,\cR}^s}$. Usually, if there is no chance of confusion, we will drop the subscript for the sake of clarity in the exposition. 

Sometimes we will refer to this condition as the smallness preservation condition. The idea comes from the main application, namely if $L>1$ and if $R$ is a rectangle,  and a family of dyadic rectangles is $L$-small, namely that 
\begin{equation}\label{eq:smallness}
\frac{|\bigcup_iR_i|}{|R|} \leq  \frac1L.
\end{equation}
then the functional $a$ preserves the smallness by applying \eqref{eq:SDp}.

\begin{remark}
As it can be easily verified, the exact same definition applies to the family $\ccR$ using the appropriate notation $ SD_{p,\ccR}^s(w)$ and $\|a\|_{SD_{p,\ccR}^s}.$
\end{remark}

We also recall the following geometric type condition which is somehow a ``rough" version of \eqref{eq:SDp}. Again, the same definition applies to $\ccR$ with the obvious notational modifications.

\begin{definition} \label{def:Dp(w)}
Let $w$ be any weight. We say that the functional $a$ satisfies the weighted $D_{p,\cR}(w)$ condition for $0<p<\infty$ if there is a constant $c$ such that for any $R \in \cR$ and any family of disjoint \emph{dyadic} subrectangles $\left\lbrace R_i\right\rbrace_i$ of $R$ the following inequality holds:
\begin{equation}\label{eq:Dp}
\left( \sum_i a(R_i)^p \frac{w(R_i)}{w(R)}  \right)^{\frac1p} \leq c\,a(R).
\end{equation}
The best possible constant $c$ above is denoted by $\|a\|_{D_{p,\cR}(w)}$. We will write in this case that $a\in D_{p,\cR}(w)$. Observe that $\|a\|_{D_{p,\cR}(w)}\geq 1$.
\end{definition}

Condition \eqref{eq:Dp} in the context of cubes or balls in $\mathbb{R}^n$,  or more generally in the context of Spaces of Homogeneous type  was introduced in \cite{FPW98}. It was also considered in \cite{OP-nondoubling}  in the ``non-homogeneous" context where an abstract self-improving property was obtained.

We conclude this introduction by remarking how condition \eqref{eq:Dp} can be used, in the much easier context of cubes $\cQ$ with no weight,  to derive very sharp proofs in the theory of fractional Sobolev spaces. Indeed,  in this paper, Corollary \ref{cor:fractional}
and many others, are extensions of the fractional Poincaré-Sobolev inequality 
\begin{equation}\label{eq:FractionalPS}
\left(\avgint_Q |u-u_Q|^{p^*_{\delta}}dx\right)^{\frac{1}{p^*_{\delta}} }\leq c_{n,p^*_{\delta}}\,
 [u]_{W^{\delta,p}}(Q),  \qquad Q\in \cQ.
\end{equation}
%

We are using here the notation $p_{\delta}^*$ for the fractional Sobolev exponent defined by $\frac{1}{p} -\frac{1}{ p_{\delta}^* }=\frac{\delta}{n}$ and also the notation
\begin{equation*}
[u]_{W^{\delta,p}(Q)}:=\ell(Q)^{\delta} \left(\avgint_Q\int_Q \frac{|u(x)-u(y)|^p}{|x-y|^{n+\delta p}}\, dy\, dx\right)^{1/p}.
\end{equation*}
To illustrate the self-improving philosophy in the case of cubes, we will discuss the main steps of the proof of \eqref{eq:FractionalPS} and a further improvement. This approach avoids completely the use of classical potential theory (see \cite{KLV} in the classical case and \cite{NPV} in the fractional case) and this is the reason why we can go beyond and consider other geometries and degeneracies.  Moreover, due to the presence of a more complicated geometry and to the presence of degeneracies produced by weights, the proofs of the results in the present paper require substantial work and involve serious technical difficulties.

We begin with the following easy to get 
inequality 
\begin{equation}\label{FPI-rough}
\avgint_Q \lvert u(x)-u_Q\rvert\,dx \leq  c_n\, [u]_{W^{\delta,p}(Q)} \qquad Q\in \cQ, \quad\delta>0
\end{equation}
which we claim encodes a lot of information, at least in the case $\delta \in (0,1)$. Indeed, it can be shown using the ideas in \cite{FPW98}, with the recently obtained  improvements from \cite{CantoPerez}, the following weak type version of \eqref{eq:FractionalPS}
\begin{equation}\label{weaktypeSobolev}
\|u-u_Q\|_{L^{p^*_{\delta},\infty}\big( Q, \frac{dx}{|Q|}\big)} \leq c_n \, p^*_{\delta}\, \, [u]_{W^{\delta,p}(Q)}, 
\end{equation}
which holds whenever $p\in [1,\infty)$ and for $\frac{1}{p} -\frac{1}{ p_{\delta}^* }=\frac{\delta}{n}$. Inequality \eqref{weaktypeSobolev}  really follows from 
\begin{equation*}
\left( \sum_i a(Q_i)^{p^*_{\delta}} \frac{|Q_i|}{|Q|}  \right)^{\frac1{p^*_{\delta}}} \leq a(Q), 
\end{equation*}
where $a(Q)=[u]_{W^{\delta,p}(Q)}$ for any $\delta \in(0,1)$ and any $p\in [1,\infty)$ (observe that the constant in front is equal to 1). Now, since the truncation argument works for the functional $[u]_{W^{\delta,p}(Q)}$ as shown in \cite{DIV},  we can replace the weak norm \eqref{weaktypeSobolev} by the strong norm 
$$
\|u-u_Q\|_{L^{p^*_{\delta}}\big( Q, \frac{dx}{|Q|}\big)} \leq c_n \, p^*_{\delta}\, \, [u]_{W^{\delta,p}(Q)}
$$
(actually we can put the Lorentz  norm $\|\|_{L^{p^*_{\delta},p}}$ as well).  On the other hand, it turns out that this result is far from being optimal. Indeed, motivated by the works \cite{BBM, MS}, M. Milman obtained in \cite{M}, using ideas from interpolation theory, a very general way of proving the following self-improvement of \eqref{FPI-rough}, 
\begin{equation*}
\avgint_Q |u-u_Q|\,dx\leq c_{n}\,(1-\delta)^{\frac1p}\, \,[u]_{W^{\delta,p}(Q)},
\end{equation*}
where the highly  interesting extra gain $(1-\delta)^{\frac1p}$ appears in front. 
Then, exactly as above, the results in \cite{CantoPerez} combined with the truncation from \cite{DIV} yield the following highly interesting  improvement of \eqref{eq:FractionalPS}.
\begin{theorem} \label{thm:FracSobGain} Let $0<\delta<1$ and $1\leq p<\infty$. Then, 
\begin{equation*}
\left(\avgint_Q |u(x)-u_Q|^{p^*_{\delta}}dx\right)^{\frac{1}{p^*_{\delta}} }\leq c_n \, p^*_{\delta}\, (1-\delta)^{\frac1p}\, [u]_{W^{\delta,p}(Q)}   \qquad Q\in \cQ
\end{equation*}
\end{theorem}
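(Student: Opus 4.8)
The plan is to assemble Theorem \ref{thm:FracSobGain} from three ingredients that are all already available in the literature and cited in the excerpt, so that the proof is essentially a ``combine the machinery'' argument. First, I would take as the starting point the improved rough Poincar\'e inequality of Bourgain--Brezis--Mironescu type, in the form obtained by Milman in \cite{M} (following \cite{BBM, MS}), namely
\begin{equation*}
\avgint_Q |u-u_Q|\,dx\leq c_{n}\,(1-\delta)^{\frac1p}\, [u]_{W^{\delta,p}(Q)},\qquad Q\in\cQ,
\end{equation*}
which already carries the extra gain $(1-\delta)^{\frac1p}$ in front. This plays the role of the generalized $(1,1)$-Poincar\'e inequality \eqref{eq:general-starting-a(R)} with the functional $a(Q)=(1-\delta)^{\frac1p}[u]_{W^{\delta,p}(Q)}$ (or, if one prefers to track the gain separately, with $a(Q)=[u]_{W^{\delta,p}(Q)}$ and the constant carried along).

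Second, I would verify the discrete summation / $D_p$-type condition for the fractional seminorm functional. The key observation, already isolated in the excerpt, is that for $a(Q)=[u]_{W^{\delta,p}(Q)}$ one has
\begin{equation*}
\left( \sum_i a(Q_i)^{p^*_{\delta}} \frac{|Q_i|}{|Q|}  \right)^{\frac1{p^*_{\delta}}} \leq a(Q)
\end{equation*}
for any family of disjoint dyadic subcubes $\{Q_i\}$ of $Q$, with constant exactly $1$. This is a direct computation: expanding $a(Q_i)^{p^*_\delta}$, using $\ell(Q_i)^{\delta p^*_\delta}|Q_i|^{-1}\le \ell(Q)^{\delta p^*_\delta}|Q|^{-1}$ together with $\frac1p-\frac1{p^*_\delta}=\frac\delta n$, bounding $p^*_\delta/p\le 1$ is not quite what is needed --- rather one uses $\ell p^*_\delta$-scaling and the superadditivity / disjointness of the integrals $\int_{Q_i}\int_{Q_i}$, which are dominated by $\int_Q\int_Q$. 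In the language of the paper this says $a\in D_{p^*_\delta,\cQ}(dx)$ (unweighted), in fact $a\in SD^{n/\delta}_{p^*_\delta}$ with the sharp exponent. Feeding this into the self-improving machinery of \cite{FPW98} with the refinements of \cite{CantoPerez} upgrades the $(1,1)$ estimate to the weak-type bound
\begin{equation*}
\|u-u_Q\|_{L^{p^*_{\delta},\infty}( Q, \frac{dx}{|Q|})} \leq c_n \, p^*_{\delta}\,(1-\delta)^{\frac1p}\, [u]_{W^{\delta,p}(Q)},
\end{equation*}
the extra $(1-\delta)^{\frac1p}$ simply riding along since it is a multiplicative constant attached to $a$.

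Third, I would remove the weak norm in favor of the strong (or even Lorentz $L^{p^*_\delta,p}$) norm by invoking the truncation method. Here the point is that the functional $a(Q)=[u]_{W^{\delta,p}(Q)}$ is stable under truncations of $u$ --- i.e. $[\,\tau_{t_1,t_2}u\,]_{W^{\delta,p}(Q)}\le [u]_{W^{\delta,p}(Q)}$ for the standard truncation operators, a fact established in \cite{DIV}. Applying the weak-type inequality to each truncation $u_k = (u-u_Q)$ restricted to dyadic level sets $\{2^k<|u-u_Q|\le 2^{k+1}\}$ and summing the resulting geometric-type series (the Maz'ya--type truncation argument) converts the Marcinkiewicz bound into the genuine $L^{p^*_\delta}$ bound, with the constant changing only by a universal factor that can be absorbed into $c_n$. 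Assembling the three steps yields exactly
\begin{equation*}
\left(\avgint_Q |u(x)-u_Q|^{p^*_{\delta}}dx\right)^{\frac{1}{p^*_{\delta}} }\leq c_n \, p^*_{\delta}\, (1-\delta)^{\frac1p}\, [u]_{W^{\delta,p}(Q)}.
\end{equation*}

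The step I expect to be the main obstacle --- or at least the one requiring the most care --- is the second one: checking the $D_{p^*_\delta}$ (or $SD$) condition with constant exactly $1$ and then running the self-improving iteration so that \emph{no spurious powers of $(1-\delta)^{-1}$} creep back in. The whole value of the statement is the sharp gain $(1-\delta)^{1/p}$, so one must be scrupulous that the abstract machinery of \cite{FPW98, CantoPerez} is applied with a functional whose $D_p$-constant is $\delta$-independent (indeed $=1$), and that the final constant $c_n\,p^*_\delta$ depends only on dimension and the exponent and not on $\delta$ through any hidden channel; the truncation argument of \cite{DIV} likewise needs the truncation-stability constant to be $1$. Once these sharpness bookkeeping points are confirmed, the proof is just the concatenation of quoted results. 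Note that everything here is for the single-parameter family $\cQ$ with Lebesgue measure; the genuinely new content of the paper --- the product-space geometry, the eccentricity dependence, and the weighted estimates --- enters only in the later theorems, so for this particular statement the ``substantial work'' alluded to in the introduction is not yet needed.
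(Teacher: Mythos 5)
Your proposal follows exactly the paper's own argument: start from Milman's $(1,1)$ inequality with the $(1-\delta)^{1/p}$ gain, verify that $a(Q)=[u]_{W^{\delta,p}(Q)}$ satisfies the $D_{p^*_\delta}$ condition with constant $1$ (via the scaling identity $\ell(Q_i)^{\delta p^*_\delta}|Q_i|^{1-p^*_\delta/p}=1$ and superadditivity of the double integrals), apply the self-improving machinery of \cite{FPW98,CantoPerez} to get the weak-type bound, and upgrade to the strong norm by the truncation argument of \cite{DIV}. This is precisely how the paper derives Theorem \ref{thm:FracSobGain}, and your emphasis on the $\delta$-independence of the $D_{p^*_\delta}$ constant is the right sharpness bookkeeping.
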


\begin{remark}\label{globalresult}
Standard arguments can be used to obtain the strong global estimate in $\mathbb{R}^{n}$ with the correction factor in front $(1-\delta)^{\frac1p}$.
\end{remark}

Some extensions of \eqref{eq:FractionalPS} to the case of domains can be found in \cite{HV}. 
We also refer to \cite{CDM} for extensions to the doubling  metric case.

We plan to extend this result into the context of the product space $\ccR$  from which we will derive the corresponding version
of \eqref{eq:FractionalPS} and then derive its corresponding Poincaré-Sobolev extension from our general method (see Theorem \ref{thm:AutomejorastrongBBM-A1-ccR}).

\section{Main results}

For the sake of clarity on the presentation, we will present our results for the basis $\cR$ and $\ccR$ separately.

\subsection{Analysis in \texorpdfstring{$\cR$}{cR}} \label{sec:analysis in cR} \

In this section we will study the self-improving phenomena in the context of $\cR$ wich will be served as a model for the case $\ccR$ that will be considered in Section \ref{sec:analysis in ccR}. First we will obtain Poincar\'e-Sobolev  type inequalities  involving higher order derivatives for rectangles in $\cR$ (some references about higher order Poincaré-Sobolev  type inequalities can be found in  \cite{AH,T}). To see where is the difficulty here, note in Definition \ref{def:smallness} that the condition on the family of rectangles is related to their measure. The classical one parameter theory for cubes works fine here, since the $n$-th power of the diameter (or the sidelength) of a cube is comparable to its measure. However, in the multiparametric setting of rectangles, this is no longer true. That is why the notion  of eccentricity comes into play.

 Our first main theorem on self-improving functionals is the following. We use here an oscillation with respect to the polynomial $P_R$ defined in Section \ref{sec:polynomials}, equation \eqref{eq:PR}. We anticipate here that this object $P_R$ is a projection to a space of polynomials of certain degree, and is the natural substitute for the average $f_R$ when dealing with functionals involving higher derivatives. The weighted estimate will be valid for any weight in the Muckenhoupt  class $w\in A_\infty:=\bigcup_{p>1}A_p$ associated to the basis $\cR$ (this can be defined for any basis, see more details about this class in Section \ref{sec:weights}).

\begin{theorem}\label{thm:AutomejorastrongcR}

Let $w$ be any $A_{\infty,\cR}$ weight in $\mathbb{R}^n$.
Consider also the functional   $a$ such that for some $p\ge 1$ it satisfies the weighted condition $SD^s_{p,\cR}(w)$ from \eqref{eq:SDp} with $s>0$ and constant $\|a\|$. 
Let $f$ be a locally integrable function such that
\begin{equation}\label{eq:UnWeightedStartingPointL1}
\frac{1}{|R|}\int_{R} |f-P_{R}f| \le a(R) \qquad R \in \cR.
\end{equation}
Then, there exists a dimensional constant $c_n$ such that for any $R \in \cR$, 
\begin{equation}\label{eq:First main estimate}
\left( \frac{1}{ w(R)  } \int_{ R }   |f -P_{R}f|^p     \,wdx\right)^{\frac{1}p}   \leq  
 c_n (1+s)\max\{\|a\|^s,1\}\,a(R).
\end{equation}
\end{theorem}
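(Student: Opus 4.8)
The plan is to use a Calder\'on--Zygmund stopping-time argument adapted to the dyadic subrectangles of a fixed $R\in\cR$, exactly in the spirit of \cite{FPW98} and with the sharpening of \cite{PR-Poincare,CantoPerez}. Fix $R\in\cR$ and, after a harmless dilation, normalize so that $a(R)=1$ and $w(R)=1$. The core object is the dyadic maximal-type decomposition of $f-P_Rf$ relative to the starting estimate \eqref{eq:UnWeightedStartingPointL1}: for a large parameter $\lambda$ (to be chosen as a suitable power related to $\|a\|$ and $s$) one forms the maximal dyadic subrectangles $\{R_i\}$ on which the average of $|f-P_Rf|$ exceeds $\lambda$. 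On each such $R_i$ one compares $P_{R_i}f$ with $P_Rf$, using that $P$ is a projection onto a fixed finite-dimensional polynomial space (Section \ref{sec:polynomials}) so that $\|P_{R_i}f - P_Rf\|_{L^\infty(R_i)}$ is controlled by a dimensional constant times the preceding-level average, hence by $c_n\lambda$. This is the usual mechanism that lets one iterate the $L^1$ oscillation bound down the stopping tree.

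The second ingredient is the smallness-preservation condition $SD^s_{p,\cR}(w)$. The point of the stopping construction is that the selected family $\{R_i\}$ is \emph{small}: by the maximality and the $L^1$ control one gets $|\bigcup_i R_i| \le c_n\,\lambda^{-1}|R|$, i.e.\ the family is $L$-small with $L\sim \lambda/c_n$ as in \eqref{eq:smallness}. Feeding this into \eqref{eq:SDp} yields
\begin{equation*}
\Big(\sum_i a(R_i)^p\, w(R_i)\Big)^{1/p} \le c\,\|a\|\,\Big(\frac{|\bigcup_i R_i|}{|R|}\Big)^{1/s} a(R) \le c_n\,\|a\|\,\lambda^{-1/s}.
\end{equation*}
Iterating the stopping time generates a sequence of generations $\mathcal G_k$ with $\sum_{R_i\in\mathcal G_k} w(R_i)$ decaying geometrically once $\lambda$ is taken large enough (here is precisely where the exponent $\lambda^{-1/s}$ must beat a fixed ratio, forcing $\lambda$ comparable to a power of $\|a\|^s$, which is the origin of the $\max\{\|a\|^s,1\}$ factor in \eqref{eq:First main estimate}); simultaneously $|f-P_Rf|$ is pointwise $\le c_n\lambda\,(k+1)$, say, off the $k$-th generation. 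The $A_{\infty,\cR}$ hypothesis on $w$ enters here: it guarantees that the Lebesgue-measure smallness of $\bigcup R_i$ transfers to $w$-measure smallness at a comparable (power) rate, so the geometric decay survives after replacing $dx$ by $w\,dx$. Summing the resulting good-$\lambda$ / layer-cake estimate over $k$ produces the $L^p(w)$ bound, with the linear factor $(1+s)$ coming from optimizing the geometric series in the parameter governed by $1/s$.

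The main obstacle is the $A_\infty$ transfer step together with keeping the constants sharp. For a generic $A_{\infty,\cR}$ weight one only has $w(S)/w(R)\le C_w\,(|S|/|R|)^{\varepsilon_w}$ with $\varepsilon_w,C_w$ depending on $[w]_{A_\infty}$ in a way that is not uniform; the delicate part of the argument — and the reason the theorem states a clean dimensional constant $c_n$ rather than a $w$-dependent one — is that the stopping parameter $\lambda$ can be chosen so large (again, a power of $\|a\|^s$) that the geometric decay is driven entirely by the functional side via $\|a\|\,\lambda^{-1/s}$, absorbing the weight constant rather than competing with it. Making this absorption precise, i.e.\ choosing $\lambda$ once and for all so that the $w$-measures of successive generations contract by a fixed ratio \emph{independently} of $w$, is the technical heart; the rest is a routine summation of the layers. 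I would carry out the details first for the unweighted case to fix the combinatorics of the stopping tree, then insert the $A_\infty$ estimate, and finally sum.
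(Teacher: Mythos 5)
Your proposal assembles the right local ingredients (dyadic Calder\'on--Zygmund stopping inside $R$, the comparison $\|P_{R_i}f-P_Rf\|_{L^\infty(R_i)}\lesssim L\,a(R)$ via the finite-dimensionality of the polynomial space, the Lebesgue smallness $|\bigcup_i R_i|\le |R|/L$, and the use of $SD^s_{p,\cR}(w)$ on the stopping family), but the global architecture — iterating the stopping time over generations $\mathcal G_k$ and summing a good-$\lambda$/layer-cake estimate — is not the paper's argument and, as described, does not deliver the stated bound. Two concrete problems. First, the pointwise claim that $|f-P_Rf|\le c_n\lambda(k+1)$ off the $k$-th generation is false in general: when you restart the decomposition inside a first-generation rectangle $R_i$ you must stop at level $\lambda\,a(R_i)$, not $\lambda\,a(R)$, so the increments accumulated along the stopping tree are $\lambda\sum_j a(R^{(j)})$ over the chain of ancestors, and these are not controlled by $\lambda(k+1)a(R)$. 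Second, the $SD^s_{p,\cR}(w)$ condition controls the weighted sum $\sum_i a(R_i)^p w(R_i)$, not the plain measure $w(\bigcup_i R_i)$; to make $w(\Omega_k)$ decay geometrically you genuinely need the $A_\infty$ transfer $w(S)/w(R)\le C_w(|S|/|R|)^{\varepsilon_w}$, whose rate $\varepsilon_w$ depends on $[w]_{A_{\infty,\cR}}$ and cannot be ``absorbed'' by a single choice of $\lambda$: either $\lambda$ is fixed and the sum of the layers carries a $w$-dependent constant, or $\lambda$ is chosen depending on $w$ and the constant is again $w$-dependent. That route is essentially what produces the \emph{weak-type} Theorem \ref{thm:AutomejoraweakcR}, whose constant does carry a factor $[w]_{A_{\infty,\cR}}$.

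The paper avoids all of this with a single-level decomposition plus an absorption (self-reference) argument, which is the idea missing from your plan. One sets
\begin{equation*}
X:=\sup_{R\in\cR}\Big(\frac{1}{w(R)}\int_R\frac{|f-P_Rf|^p}{a(R)^p}\,w\,dx\Big)^{1/p},
\end{equation*}
performs the C--Z decomposition of $|f-P_Rf|/a(R)$ at level $L$ only once, bounds the good parts pointwise by $c_nL\,a(R)$, and estimates the remaining part $\sum_j(f-P_{R_j}f)\chi_{R_j}$ by $X\big(\sum_j a(R_j)^pw(R_j)/(a(R)^pw(R))\big)^{1/p}\le X\,\|a\|\,L^{-1/s}$ directly from \eqref{eq:SDp} and the smallness $|\bigcup_jR_j|<|R|/L$. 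This gives $X\le \tilde cL+X\|a\|L^{-1/s}$, and choosing $L=e\max\{\|a\|^s,1\}$ together with $(e^{1/s})'\le 1+s$ yields \eqref{eq:First main estimate} with a purely dimensional constant. The $A_{\infty,\cR}$ hypothesis enters only qualitatively, to justify a priori that $X<\infty$ (via truncation and perturbation of $a$), and is absent from the quantitative bound — the opposite of the role you assign it. Your intuitions about the origin of $\max\{\|a\|^s,1\}$ (the size of $L$) and of $(1+s)$ (the factor $1/(1-e^{-1/s})$) are correct, but they arise from the absorption step, not from summing generations.
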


This result generalizes and improves on the corresponding result from \cite{PR-Poincare}  Theorem 1.24 where the linear constant $s$ is of exponential type.

As consequences of this sort of general template for a self-improving theorem, we will obtain Poincar\'e-Sobolev inequalities for higher order derivatives with a rather precise control on the $A_{p,\cR}$-like quantities involved. 

\begin{remark}
We will often (but not always) use a compact notation for the weighted local $L^q$ average over a rectangle $R$ defined as follows:
\begin{equation*}
\left\|f\right \|_{L^q\left (\frac{w\, dx}{w(R)}\right )}:=
 \left (\frac{1}{w(R)}\int_R |f |^q w dx\right )^{\frac{1}{q}}.
\end{equation*}

Similarly, we will use the standard notation for the normalized weak $(r,\infty)$ (quasi)norm: for any $0<r<\infty$, measurable $E$ and weight $w$, we define
\begin{equation*}
\| f \|_{L^{r,\infty}\big(\frac{w\,dx}{w(E)}\big)}:= \sup_{t>0}  t \, \left( \frac{1}{w(E)}w(\{x\in E:  |f(x)|>t\}) \right )^{1/r}.
\end{equation*}

\end{remark}

The general Theorem \ref{thm:AutomejorastrongcR} will be  used to derive several corollaries. One of the most important consequences is that we will get Poincar\'e-Sobolev type inequalities involving higher derivatives instead of the gradient.

We also present here the following result, which can be seen as a weak norm version of the main theorem, since we are asking the functional to only satisfy the ``rough" $D_p$ condition \eqref{eq:Dp} but obtaining a self-improving for the weak norm. It is a  self-improving result as in \cite{FPW98} but combined with the recent improvement obtained in \cite{CantoPerez} where linear bounds in both $[w]_{A_\infty,\cR}$ and $p$ are obtained instead of exponential.

\begin{theorem}\label{thm:AutomejoraweakcR}
Let $w$ be any $A_{\infty,\cR}$ weight in $\mathbb{R}^n$ and $a\in D_{p,\cR}(w)$. Let $f$ be a locally integrable function such that,
\begin{equation*}\label{eq:initialHypcR}
\avgint_{R} |f - P_{R}f|\le a(R) \qquad  R\in \cR.
\end{equation*}
Then there exists a dimensional constant $c>0$ such that for every $R\in \cR$,
\begin{equation*}
\big \| f-P_Rf \big \|_{L^{p,\infty}\big( R, \frac{w\,dx}{w(R)}\big)} \leq  c\, p\, [w]_{A_{\infty,\cR}}\,\|a\|_{D_{p,\cR}(w)} \, a(R).  
\end{equation*}
\end{theorem}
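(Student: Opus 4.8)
\textbf{Proof proposal for Theorem \ref{thm:AutomejoraweakcR}.}

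The plan is to run a Calder\'on--Zygmund type stopping-time argument on dyadic subrectangles of a fixed $R\in\cR$, adapted to the product setting, following the self-improving scheme of \cite{FPW98} but inserting the sharp linear $A_\infty$ bound from \cite{CantoPerez} at the point where one passes from the dyadic maximal function to the weighted measure. Fix $R\in\cR$ and $t>0$. The starting inequality $\avgint_R|f-P_Rf|\le a(R)$ controls the mean oscillation, so we first estimate $w(\{x\in R:|f(x)-P_Rf|>t\})$. The key decomposition: for $\lambda$ large compared to $a(R)$, select the maximal dyadic subrectangles $\{R_i\}$ of $R$ on which $\avgint_{R_i}|f-P_Rf|>\lambda$; by maximality $\avgint_{R_i}|f-P_Rf|\le 2\lambda$ (using the product-dyadic structure, where the relevant ``parent'' has comparable average), and on the complement of $\bigcup_i R_i$ we have $|f-P_Rf|\le\lambda$ a.e.\ by the Lebesgue differentiation theorem for the product basis. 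On each $R_i$ we want to replace $P_Rf$ by $P_{R_i}f$ using a telescoping/Poincar\'e comparison, picking up the functional values $a(R_i)$ along the way, and then iterate.

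The iteration goes as follows. Write $E_\lambda(R)=\{x\in R:|f-P_Rf|>t\}$. Splitting at level $\lambda\sim t$ into the good set (where we are done pointwise) and $\bigcup_i R_i$, we are reduced to controlling $\sum_i w(\{x\in R_i:|f(x)-P_{R_i}f|>ct\})+$ terms coming from $|P_{R_i}f-P_Rf|$. The polynomial-comparison terms $|P_{R_i}f-P_Rf|$ are controlled, by the properties of the projection $P_R$ from Section \ref{sec:polynomials} together with the starting hypothesis, by a sum of $a$-values over the chain from $R_i$ up to $R$; these are absorbed using that $a\in D_{p,\cR}(w)$. Iterating the selection on each $R_i$ produces a tree of rectangles, and at generation $k$ the threshold has grown geometrically while, crucially, the $D_{p,\cR}(w)$ condition \eqref{eq:Dp} gives
\begin{equation*}
\sum_i a(R_i)^p\,\frac{w(R_i)}{w(R)}\le \|a\|_{D_{p,\cR}(w)}^p\, a(R)^p,
\end{equation*}
which lets us sum the geometric series in $k$ at the cost of a constant. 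The one place where the weight enters beyond this $\ell^p$-summation is the estimate of $w$ of the top-level selected set in terms of its Lebesgue measure: here one uses $w\in A_{\infty,\cR}$ in the quantitative Fujii--Wilson form, and the recent result of \cite{CantoPerez} gives $w(\bigcup_iR_i)/w(R)\lesssim [w]_{A_{\infty,\cR}}\,|\bigcup_iR_i|/|R|$ with the \emph{linear} dependence on $[w]_{A_{\infty,\cR}}$ (rather than the exponential bound implicit in \cite{FPW98}). Combining, we obtain
\begin{equation*}
\frac{w(E_t(R))}{w(R)}\le c\,[w]_{A_{\infty,\cR}}\Big(\frac{a(R)}{t}\Big)^p\|a\|_{D_{p,\cR}(w)}^p\sum_{k\ge0}2^{-kp}\cdot(\text{geometric factor}),
\end{equation*}
and the factor of $p$ in the final constant comes from $\big(\sum_{k\ge0}c^{-kp}\big)^{1/p}\sim p$ when $c$ is an absolute constant close to $1$ (equivalently, from optimizing the stopping-time ratio $2\lambda/\lambda$); taking $p$-th roots and supremizing over $t$ yields the claimed bound $\|f-P_Rf\|_{L^{p,\infty}(R,w\,dx/w(R))}\le c\,p\,[w]_{A_{\infty,\cR}}\,\|a\|_{D_{p,\cR}(w)}\,a(R)$.

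The main obstacle I expect is the product/multiparameter geometry in the stopping-time step: in the one-parameter case a dyadic maximal-function argument gives disjoint maximal cubes automatically, but the dyadic rectangles in $\cR$ form a basis for which the strong maximal function is not of weak type $(1,1)$, so one must be careful that the selection of maximal rectangles is carried out coordinate-by-coordinate or, rather, that only \emph{dyadic} subrectangles are used (as the hypotheses of $D_{p,\cR}(w)$ and the starting inequality are stated for dyadic subrectangles) and that the nestedness needed for the telescoping of $P_{R_i}f-P_Rf$ survives. A secondary technical point is verifying that the polynomial projections $P_R$ interact well with the dyadic tree — i.e.\ that $\|P_{R_i}f-P_{R'}f\|_{L^\infty(R_i)}$ for $R_i\subset R'$ is dominated by the relevant averaged oscillations, which should follow from the $L^1$--$L^\infty$ equivalence of norms on the fixed finite-dimensional polynomial space together with \eqref{eq:UnWeightedStartingPointL1}. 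Once these structural facts are in place the summation is routine.
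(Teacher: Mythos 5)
Your overall skeleton (iterated dyadic Calder\'on--Zygmund selection on a fixed $R$, absorption of the polynomial comparison terms $P_{R_i}f-P_Rf$ via the $L^1$--$L^\infty$ equivalence on $\mathcal{P}_m$, and summation of $a(R_i)^p\,w(R_i)/w(R)$ via the $D_{p,\cR}(w)$ condition) is the Franchi--P\'erez--Wheeden scheme that the paper also follows, and your cautionary remarks about restricting to \emph{dyadic} subrectangles are well placed. The genuine gap is in the single step where you let the weight enter: the inequality
\[
\frac{w(\bigcup_i R_i)}{w(R)}\ \lesssim\ [w]_{A_{\infty,\cR}}\,\frac{|\bigcup_i R_i|}{|R|}
\]
is false for general $A_{\infty}$ weights and is not what \cite{CantoPerez} proves. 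What $A_{\infty,\cR}$ actually yields (through the sharp reverse H\"older inequality, Theorem \ref{thm:Ainfty-RHI-cR}) is $w(E)/w(R)\le 2\,(|E|/|R|)^{\varepsilon/(1+\varepsilon)}$ with $\varepsilon\simeq 1/(2^{n+1}[w]_{A_{\infty,\cR}})$: the degradation sits in the \emph{exponent}, not as a linear factor in front. If you feed that into your tree iteration at every generation you recover only the exponential dependence on $[w]_{A_{\infty,\cR}}$ of \cite{FPW98} --- precisely the bound this theorem is designed to beat.

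The paper gets the linear constant by decoupling the two roles of the C--Z iteration. It first proves, by iterating the dyadic decomposition \emph{in Lebesgue measure only}, a John--Nirenberg/Karagulyan-type estimate for the ratio $M^d_R(f-P_Rf)/M^\sharp_m f$ (Theorem \ref{thm:TeoremacR}), and then applies the reverse H\"older inequality exactly once, which converts the unweighted exponential decay into the good-$\lambda$ bound $c_1 e^{-c_2/(\gamma[w]_{A_{\infty,\cR}})}w(R)$ of Corollary \ref{COROLARIOcR}; the constant $[w]_{A_{\infty,\cR}}$ lands in the exponent a single time. Integrating $\lambda^{p-1}$ against this exponential produces $(c\,p\,[w]_{A_{\infty,\cR}})^p$, whose $p$-th root is the claimed bound; this is also the correct source of the factor $p$ (essentially $\Gamma(p+1)^{1/p}\sim p$), rather than your geometric series $\bigl(\sum_k c^{-kp}\bigr)^{1/p}$, which stays bounded for an absolute $c>1$. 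Only after this good-$\lambda$ estimate is in hand does one invoke the hypothesis $\avgint_R|f-P_Rf|\le a(R)$ (which controls $M^\sharp_m f$ by the functional) together with the $D_{p,\cR}(w)$ condition, as in \cite{FPW98}. To repair your argument you would need to replace the false linear measure comparison by this two-step route.
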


This result is completely new and has a lot of potential applications. We will show some of them in next sections.

Recall that
\begin{equation*}
\big \| g \big\|_{L^{q}(X)} \leq \left( \frac{p}{p-q} \right)^\frac{{1}}{q} \, \big\|g\big\|_{L^{p,\infty}(X)} 
\end{equation*}
whenever $(X,\mu)$ is a probability space, and $0<q<p<\infty$ (sometimes called Kolmogorov's inequality.) Then, under the assumption 
of Theorem \ref{thm:AutomejoraweakcR}, if $q<p$, 
\begin{equation*}
\big \| f-P_Rf \big \|_{L^{q}\big( R, \frac{w\,dx}{w(R)}\big)} \leq  c_{p,q}\, [w]_{A_{\infty,\cR}}\,\|a\|_{D_{p,\cR}(w)} \, a(R).  
\end{equation*}

\subsection{Higher order \texorpdfstring{Poincar\'e}{Poincare}}
As a consequence of our general result we will prove, using condition \eqref{eq:SDp} as the key point, inequalities 
for $R\in \cR$ of the form
\begin{equation*}
\left (\frac{1}{w(R)}\int_R |f- P_Rf |^q w dx\right )^{\frac{1}{q}}\le C_w\,d(R)^m \left (\frac{1}{w(R)}\int_R |\nabla^m f|^{p}  w dx\right )^{\frac{1}{p}}  
\end{equation*}
where $C_w$ is a constant and $P_Rf$ is an appropriate optimal polynomial of degree less than $m$, defined in Section \ref{sec:polynomials}. The goal is to find the best possible improvement in the exponent $q$ on the left hand side and precise estimates on $C_w$. We will refer to the above inequality as a higher order  $(q,p)$-Poincar\'e-Sobolev weighted inequality.

A $(p,p)$ version of the above inequality in the case of cubes with $p\ge 1$, was proved in \cite{PR-Poincare}. Here we improve on that result achieving a higher Sobolev-type exponent on the left hand side and also valid for the family of rectangles $\cR$. As usual, when proving weighted inequalities related to rectangles, the appropriate class of weights is the \emph{strong} class defined analogously to the standard $A_{p}$ class as follows. We will say that $w \in A_{p,\cR}$ if
\begin{equation*}\label{eq:Ap-strong}
[w]_{A_{p,\cR}}:=\sup_{R\in \cR} \left( \frac{1}{|R|}\int_R w(x)\,dx\right) \left(\frac{1}{|R|}\int_R w(x)^{-\frac{1}{p-1}}\,dx \right)^{p-1}< \infty. 
\end{equation*}
The strong $A_{\infty,\cR}$ class is defined in the same way as in the cubic case and it enjoys the same geometric conditions (see Section \ref{sec:weights} for the details) and the corresponding constant $[w]_{A_{\infty,\cR}}$ is defined in \eqref{eq:AinftycR}.

The following corollary of a $(p,p)$-Poincar\'e inequality (for cubes) was already obtained as a corollary of \cite[Theorem 1.24]{PR-Poincare} which is a weaker version of our Theorem \ref{thm:AutomejorastrongcR} for the case of cubes.
\begin{corollary}[\cite{PR-Poincare}]\label{cor:Poincare(pp)-higher}
Let $p\geq1$ and let $w\in  A_{p}$  in $\mathbb{R}^n$.   Then the following $(p,p)$-Poincar\'e inequality holds
$$
\left (\frac{1}{w(Q)}\int_Q |f- P_Qf |^p\, w\ dx\right )^{\frac{1}{p}}\leq C\,[w]^{\frac{1}{p}}_{A_p}\ell(Q)^m 
\left (\frac{1}{w(Q)}\int_Q |\nabla^m f|^{p} \, w \ dx\right )^{\frac{1}{p}},
$$
where $C=C_{n,m}$ is a structural constant and $\ell(Q)$ is the sidelength of the cube $Q  \subset \mathbb{R}^n$.
\end{corollary}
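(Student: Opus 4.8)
The plan is to deduce Corollary \ref{cor:Poincare(pp)-higher} from the general self-improving template of Theorem \ref{thm:AutomejorastrongcR}, specialized to the basis $\cQ$ of cubes (which is the one-parameter case of $\cR$, where measure and sidelength to the $n$-th power are comparable, so $s=n$). The natural candidate functional is
\begin{equation*}
a(Q)=\ell(Q)^m \left(\frac{1}{w(Q)}\int_Q |\nabla^m f|^p\, w\, dx\right)^{\frac1p},
\end{equation*}
and the two things to verify are the starting $L^1$ Poincar\'e hypothesis \eqref{eq:UnWeightedStartingPointL1} and the smallness-preservation condition $SD^n_{p,\cQ}(w)$. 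Once both hold, Theorem \ref{thm:AutomejorastrongcR} immediately gives
\begin{equation*}
\left(\frac{1}{w(Q)}\int_Q |f-P_Qf|^p\, w\, dx\right)^{\frac1p}\le c_n(1+n)\max\{\|a\|^n,1\}\, a(Q),
\end{equation*}
which is exactly the claimed inequality with $C_{n,m}=c_n(1+n)\max\{\|a\|^n,1\}$, provided $\|a\|$ is controlled by $[w]_{A_p}^{1/p}$ up to structural constants.

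First I would establish the unweighted starting point: there is a structural constant $c_{n,m}$ with $\avgint_Q |f-P_Qf|\,dx \le c_{n,m}\,\ell(Q)^m \avgint_Q |\nabla^m f|\,dx$, where $P_Q f$ is the optimal polynomial of degree $<m$ from Section \ref{sec:polynomials}; this is the classical higher-order $(1,1)$-Poincar\'e inequality on cubes (a representation-formula or iterated-first-order-Poincar\'e argument, with \eqref{eq:Poincare-(1,1)} as the base case). Then by Jensen (or H\"older with exponents $p$ and $p'$ against the weight, since for $w\in A_p$ one has $\avgint_Q |g|\,dx \le [w]_{A_p}^{1/p}(\avgint_Q |g|^p w)^{1/p}(\tfrac{w(Q)}{|Q|})^{-1/p}$ after unwinding) one upgrades the right-hand side to the weighted $L^p$ average, obtaining
\begin{equation*}
\avgint_Q |f-P_Qf|\,dx \le c_{n,m}\,[w]_{A_p}^{\frac1p}\,\ell(Q)^m \left(\frac{1}{w(Q)}\int_Q |\nabla^m f|^p w\right)^{\frac1p} = c_{n,m}\,[w]_{A_p}^{\frac1p}\, a(Q),
\end{equation*}
so a constant multiple of $a$ (absorbing $c_{n,m}[w]_{A_p}^{1/p}$) satisfies \eqref{eq:UnWeightedStartingPointL1}.

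Next I would check $SD^n_{p,\cQ}(w)$ for this $a$. Given disjoint dyadic subcubes $\{Q_i\}$ of $Q$, we need
\begin{equation*}
\left(\sum_i a(Q_i)^p \frac{w(Q_i)}{w(Q)}\right)^{\frac1p} = \left(\sum_i \ell(Q_i)^{mp}\frac{1}{w(Q)}\int_{Q_i}|\nabla^m f|^p w\right)^{\frac1p} \le c\left(\frac{|\bigcup_i Q_i|}{|Q|}\right)^{\frac1n} a(Q).
\end{equation*}
Since $\ell(Q_i)\le \ell(Q)$ and $\ell(Q_i)^{mp}=\ell(Q_i)^{(m-1)p}\ell(Q_i)^p \le \ell(Q)^{(m-1)p}\ell(Q_i)^p$, and the $Q_i$ are disjoint, the left side is bounded by $\ell(Q)^{(m-1)p}\cdot \max_i \ell(Q_i)^p \cdot \frac{1}{w(Q)}\int_Q |\nabla^m f|^p w$; using $\max_i\ell(Q_i) = \ell(Q)\max_i(|Q_i|/|Q|)^{1/n} \le \ell(Q)(|\bigcup Q_i|/|Q|)^{1/n}$ gives the $SD^n$ bound with $c$ structural, hence $\|a\|_{SD^n_{p,\cQ}(w)}\le c_{n,m}[w]_{A_p}^{1/p}$ after rescaling $a$ as above. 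I expect this estimate on $\|a\|$ to be the main point requiring care: one must track how the $A_p$ constant enters, and whether $\max\{\|a\|^n,1\}$ in \eqref{eq:First main estimate} produces an $[w]_{A_p}^{n/p}$ rather than the advertised $[w]_{A_p}^{1/p}$ — the resolution is that the $SD^s$ smallness exponent can be taken arbitrarily large (one has freedom in $s$ since $\ell(Q_i)^{mp}\le \ell(Q)^{mp-\varepsilon p}\ell(Q_i)^{\varepsilon p}$ for small $\varepsilon$), so one applies the theorem with a small parameter making $\|a\|^s$ close to $[w]_{A_p}^{1/p}$ up to structural constants, absorbing the rest into $C_{n,m}$. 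This is the one place where the sharper $SD_p$ formulation (as opposed to the rougher $D_p$) is essential, and it is precisely the improvement over \cite[Theorem 1.24]{PR-Poincare} advertised in the text.
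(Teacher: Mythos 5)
Your overall route is the paper's: the starting point is Chua's higher-order $(1,1)$-Poincar\'e inequality $\avgint_Q|f-P_Qf|\le c_{n,m}\ell(Q)^m\avgint_Q|\nabla^m f|$ upgraded by H\"older and the $A_p$ condition to the weighted $L^p$ average (this is exactly \eqref{eq:HigherOrderPoincare-Weighted} in the proof of Corollary \ref{cor:ptimes-Aq-m-derivatives}), and the self-improvement comes from Theorem \ref{thm:AutomejorastrongcR} applied to the functional $a(Q)=c_{n,m}[w]_{A_p}^{1/p}\ell(Q)^m\bigl(\frac{1}{w(Q)}\int_Q|\nabla^m f|^p w\bigr)^{1/p}$, whose smallness preservation is the content of Example \ref{ex:modelexample} (with $\delta=m$, $A(Q,x)=|\nabla^m f(x)|^p w(x)$, hence $s=n/m$; your hands-on computation with $s=n$ is also fine).

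The one genuine error is in your last paragraph, and it concerns precisely the point you flag as delicate. You assert that after absorbing the prefactor $c_{n,m}[w]_{A_p}^{1/p}$ into $a$ one gets $\|a\|_{SD^n_{p}(w)}\le c_{n,m}[w]_{A_p}^{1/p}$. This is false: inequality \eqref{eq:SDp} is homogeneous of degree one in $a$, so multiplying $a$ by any constant leaves $\|a\|_{SD^s_p(w)}$ unchanged. Your own computation (and Example \ref{ex:modelexample}) shows the norm is a \emph{structural} constant, at most $1$, independently of the weight; consequently $\max\{\|a\|^s,1\}$ in \eqref{eq:First main estimate} is structural, the feared $[w]_{A_p}^{n/p}$ never appears, and the $[w]_{A_p}^{1/p}$ in the conclusion comes solely from the H\"older step in the starting point. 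The ``resolution'' you then propose is both unnecessary and points the wrong way: writing $\ell(Q_i)^{mp}\le\ell(Q)^{(m-\varepsilon)p}\ell(Q_i)^{\varepsilon p}$ with small $\varepsilon$ yields the smallness exponent $\varepsilon/n$, i.e.\ $s=n/\varepsilon$ \emph{large}, which inflates both the factor $(1+s)$ and (if the norm really depended on $[w]_{A_p}$) the quantity $\|a\|^s$; to shrink $\|a\|^s$ one would need $s$ small, not large. Replace that paragraph by the one-line homogeneity observation and the proof is complete.
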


The method of proof goes beyond Poincar\'e to reach Poincar\'e-Sobolev type inequalities with a higher exponent on the left hand side. In the following theorem the method seems to work best when the weight adds some degeneracy, which is a bit surprising. More precisely, we will say that a weight $w\in A_{\infty,\cR}$ is \textbf{nontrivial} whenever $[w]_{A_{\infty,\cR}}>1$. The same notion for other bases of rectangles or cubes will be used as well.

\begin{theorem} \label{thm:PoSo-Aq-diam-delta}
Let $1 \leq p < n $ and let $w\in A_{q,\cR}$ in $\mathbb{R}^n$ be a {\bf nontrivial weight} with $1\le q\le p$. Let also $p_w^* $ be defined by 
\begin{equation}\label{eq:ptimes-Aq}
\frac{1}{p} -\frac{1}{ p_w^* }=\frac{\delta}{n}\frac{1}{q+\log [w]_{A_{q,\cR}}}.
\end{equation}
For a fixed $\delta>0$, let $a$ be the functional defined by
\begin{equation}\label{eq:model-d(Q)^delta-a(Q)} 
a(R)=d(R)^\delta\left(\frac{1}{w(R)}\mu(R) \right)^{1/p},
\end{equation}
where $\mu $ is any Radon measure in $\mathbb{R}^n$. Suppose that  $f$ satisfies 
\begin{equation}\label{eq:Sarting-PoSo-Aq}
\frac{1}{|R|}\int_{R} |f-P_{R}f| \le a(R)
\end{equation}
for every rectangle  $R \subset \mathbb{R}^n$. Then, there exists a dimensional constant $C_n$ such that for any rectangle $R \subset \mathbb{R}^n$
\begin{equation*}
\left( \frac{1}{ w(R)  } \int_{ R }   |f -P_{R}f|^{p_w^*}     \,wdx\right)^{\frac{1}{p_w^*}}  	\leq B_{w,q}\, C_n \frac{1}{\delta} a(R), 
\end{equation*}
where $B_{w,q}=\frac{1+ \log[w]_{A_{q,\cR}}^\frac{1}{q}}{ \log[w]_{A_{q,\cR}}^\frac{1}{q}}$.

\end{theorem}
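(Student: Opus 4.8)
\textbf{Proof proposal for Theorem \ref{thm:PoSo-Aq-diam-delta}.}
The plan is to verify that the functional $a$ in \eqref{eq:model-d(Q)^delta-a(Q)} satisfies the smallness-preservation condition $SD^s_{p,\cR}(w)$ for a suitable exponent $s$, and then to invoke the general self-improving template of Theorem \ref{thm:AutomejorastrongcR}, tracking constants carefully. First I would fix a rectangle $R$ and a family $\{R_i\}_i$ of disjoint dyadic subrectangles, and compute
\begin{equation*}
\sum_i a(R_i)^p\,\frac{w(R_i)}{w(R)}
= \sum_i d(R_i)^{\delta p}\,\frac{\mu(R_i)}{w(R)}
\le d(R)^{\delta p}\,\frac{\mu\!\left(\bigcup_i R_i\right)}{w(R)}
\le d(R)^{\delta p}\,\frac{\mu(R)}{w(R)}
= a(R)^p,
\end{equation*}
using $d(R_i)\le d(R)$, disjointness, and monotonicity of $\mu$. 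This already gives the $D_{p,\cR}(w)$ condition with constant $1$, but to get the Sobolev gain we need the decay factor $(|\bigcup_i R_i|/|R|)^{1/s}$. To produce it I would interpolate: the crude bound above can be sharpened by noting $d(R_i)^{\delta p}\le d(R)^{\delta p}(|R_i|/|R|)^{\delta p/n}$ is \emph{false} in the multiparametric setting — and this is precisely where eccentricity must enter — so instead I would split the estimate, extracting a small power of the measure ratio from the $A_{q,\cR}$ structure of $w$. Concretely, using that $w\in A_{q,\cR}$ controls the ratio $\mu(R_i)/\mu(R)$ against a power of $|\bigcup_i R_i|/|R|$ with an exponent degrading like $1/(q+\log[w]_{A_{q,\cR}})$ (the reverse-doubling/$A_\infty$ self-improvement of the weight, cf. the $A_{\infty,\cR}$ constant), one obtains $a\in SD^s_{p,\cR}(w)$ with
\begin{equation*}
\frac{1}{s}\ \sim\ \frac{\delta}{n}\,\frac{1}{q+\log[w]_{A_{q,\cR}}},
\qquad\text{equivalently}\qquad \frac{1}{p}-\frac{1}{p_w^*}=\frac{1}{s},
\end{equation*}
matching \eqref{eq:ptimes-Aq}, and with $\|a\|_{SD^s_{p,\cR}(w)}$ bounded by an absolute constant.

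Once this is in place, Theorem \ref{thm:AutomejorastrongcR} applied with this $a$, this $p$, and this $s$ yields
\begin{equation*}
\left(\frac{1}{w(R)}\int_R |f-P_Rf|^p\,w\,dx\right)^{1/p}\le c_n(1+s)\max\{\|a\|^s,1\}\,a(R).
\end{equation*}
Here $\|a\|\lesssim 1$ makes $\max\{\|a\|^s,1\}$ harmless, and $1+s\sim \frac{n}{\delta}(q+\log[w]_{A_{q,\cR}})$. The remaining work is to upgrade the left-hand exponent from $p$ to $p_w^*$: this is done by applying the theorem not to $f$ directly but exploiting that the same $SD^s$ bound with $p$ replaced by $p_w^*$ holds once one feeds in the truncation/Sobolev-exponent bookkeeping — more precisely, one reruns the self-improving machine observing that $SD^s_{p,\cR}(w)$ for the functional $a$ is equivalent, via Hölder on the summation, to $D_{p_w^*,\cR}(w)$ with $\frac1p-\frac1{p_w^*}=\frac1s$, which is exactly the output integrability the method delivers. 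Collecting the constants and writing $1+\log[w]_{A_{q,\cR}}^{1/q}$ in the normalized form $B_{w,q}\cdot\log[w]_{A_{q,\cR}}^{1/q}$ (legitimate since the weight is \textbf{nontrivial}, so $[w]_{A_{q,\cR}}>1$ and the denominator does not vanish), the factor $(1+s)$ together with the constant from the weight estimate combines into $B_{w,q}\,C_n\,\frac1\delta$, giving the claimed bound.

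The main obstacle I anticipate is the first step: establishing that $a\in SD^s_{p,\cR}(w)$ with the \emph{sharp} exponent $s$ given by \eqref{eq:ptimes-Aq}. In the one-parameter cubic setting this follows because $\ell(Q)^n\sim|Q|$, so the diameter factor $d(R_i)^{\delta p}$ directly converts into a power of $|R_i|$; but for rectangles $d(R)^n$ and $|R|$ differ by the eccentricity, and a naive argument either loses the gain entirely or introduces eccentricity-dependent constants. The correct route is to avoid converting $d(R_i)$ into measure at all for the "bad" part, and instead to borrow the decay purely from the weight: one writes $\mu(R_i)/w(R) = (\mu(R_i)/w(R_i))\cdot(w(R_i)/w(R))$, bounds $\mu(R_i)/w(R_i)$ using \eqref{eq:Sarting-PoSo-Aq}-type control (it is essentially $a(R_i)^p/d(R_i)^{\delta p}$, circularly), and then uses the $A_{\infty,\cR}$ openness of $w$ — namely $\sum_i w(R_i)/w(R)\lesssim (|\bigcup_i R_i|/|R|)^{\varepsilon_w}$ with $\varepsilon_w\sim 1/(q+\log[w]_{A_{q,\cR}})$ — to extract exactly the measure-ratio power needed. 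Making $\varepsilon_w$ linear rather than exponential in $\log[w]_{A_{q,\cR}}$ is where the quantitative $A_\infty$ results behind \cite{CantoPerez} are essential, and verifying that these transfer verbatim to the basis $\cR$ is the technical heart of the argument.
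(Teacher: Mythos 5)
Your overall strategy (verify a smallness condition for $a$ and feed it into Theorem \ref{thm:AutomejorastrongcR}) is the paper's, but the execution has two genuine gaps. First, you dismiss the inequality $d(R_i)^{\delta p}\le d(R)^{\delta p}(|R_i|/|R|)^{\delta p/n}$ as ``false in the multiparametric setting,'' when for the \emph{dyadic} subrectangles over which $SD^s_{p,\cR}(w)$ is quantified it is an exact identity: dyadic descendants preserve eccentricity (Lemma \ref{lem:eccentr}), so $d(R_i)=|R_i|^{1/n}/e(R)$ with the \emph{same} $e(R)$ for every $R_i$. This identity is precisely the engine of the paper's Lemma \ref{lem:p^*}, and by discarding it you are led to a substitute mechanism that does not work: $\mu$ is an arbitrary Radon measure, so the $A_{q,\cR}$ structure of $w$ says nothing about $\mu(R_i)/\mu(R)$, and extracting the smallness factor from $\sum_i w(R_i)/w(R)$ alone only reproduces the $SD^{n/\delta}_{p,\cR}(w)$ condition of Example \ref{ex:modelexample}, i.e.\ a $(p,p)$ estimate with no Sobolev gain.

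Second, and more seriously, Theorem \ref{thm:AutomejorastrongcR} returns $L^p$ integrability when the $SD$ condition holds \emph{at exponent $p$}; to land in $L^{p^*_w}$ you must verify $a\in SD^{s}_{p^*_w,\cR}(w)$ directly, which is what Lemma \ref{lem:p^*} does. Your proposed upgrade --- that $SD^s_{p,\cR}(w)$ is ``equivalent via H\"older'' to $D_{p^*_w,\cR}(w)$ with $\frac1p-\frac1{p^*_w}=\frac1s$ --- is not a valid implication in the weighted setting, and even if it were, $D_{p^*_w,\cR}(w)$ only feeds the weak-type Theorem \ref{thm:AutomejoraweakcR}; passing to the strong norm by truncation is unavailable for a general measure $\mu$ (that route is Theorem \ref{thm:PoSo-Aq-diam-delta-weak}, not the statement at hand). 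Relatedly, the quantitative bookkeeping cannot close as you describe: at the exponent $p^*_M$ the $SD$ norm is \emph{not} bounded by an absolute constant but degrades like $[w]_{A_{q,\cR}}^{\delta/(nqM)}$, and the whole point of the choice $M=1+\log[w]_{A_{q,\cR}}^{1/q}$ is to make $\|a\|^{s}=[w]_{A_{q,\cR}}^{1/\log[w]_{A_{q,\cR}}}=e$ while $s=nM'/\delta$ produces exactly the factor $B_{w,q}/\delta$; your parameters give $1+s\sim\frac n\delta(q+\log[w]_{A_{q,\cR}})$, which grows with the weight instead of tending to $C_n/\delta$.
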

Note that the dependence on the weight can be disregarded if we are looking for the asymptotic behavior when $[w]_{{A_{q,\cR}}}\to\infty$ (we can bound that expression by 2 by considering weights such that $[w]_{{A_{q,\cR}}}\ge e^q$). The real problem is with \emph{flat weights}. We borrow that terminology from \cite{ParissisRela} to refer to weights with small $A_\infty$ constant, close to 1. Those weights are, in some sense, getting closer to being just a multiple of the Lebesgue measure, and that is why this situation is counter intuitive, since one would expect an easier problem for this latter case. Also note that there is some sort of balance between the exponent $p^*_w$ and the constant in the inequality. The closer the $A_q$ constant gets to 1, the closer the value of $p^*_w$ gets to the optimal Sobolev exponent. But it should also be observed that the constant in the inequality blows up when $[w]_{A_{q,\cR}}\to 1$. And that is precisely the reason why this result can not be specialized on the limit case of the constant weight, but we can give a reasonable substitute by using weak norms.

\begin{theorem} \label{thm:PoSo-Aq-diam-delta-weak}
Let $1 \leq p < n $ and let $w\in A_{q,\cR}$ in $\mathbb{R}^n$  with $1\le q\le p$. If we define the exponent $p^*_w$ as in \eqref{eq:ptimes-Aq}, the functional $a$ as in  \eqref{eq:model-d(Q)^delta-a(Q)} and consider a function satisfying \eqref{eq:Sarting-PoSo-Aq}, there exists a dimensional constant $c_{n}$ such that for any rectangle $R \subset \mathbb{R}^n$
\begin{equation}\label{eq:PoSo-Aq-diam-delta-weak}
\left \|f -P_{R}f\right \|_{L^{p_w^*,\infty}(R,\frac{w dx}{w(R)})}	\leq \, c_n\, \frac{1}{\delta} a(R).
\end{equation}
\end{theorem}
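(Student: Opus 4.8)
The strategy is to deduce this weak-type estimate from the ``rough'' self-improving result of Theorem~\ref{thm:AutomejoraweakcR} by verifying that the specific functional $a(R)=d(R)^{\delta}\big(\mu(R)/w(R)\big)^{1/p}$ belongs to $D_{p_w^*,\cR}(w)$ with a good control on $\|a\|_{D_{p_w^*,\cR}(w)}$. First I would recall that for any family of disjoint dyadic subrectangles $\{R_i\}$ of $R$ one has $\sum_i \mu(R_i)\le \mu(R)$ (disjointness) and, since each $R_i\subset R$, $d(R_i)\le d(R)$; hence the whole game is to balance the exponents so that the powers of $d(R)$ and $w(R)$ that come out match $a(R)$. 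Writing $q^*:=p_w^*$, the sum $\sum_i a(R_i)^{q^*}\,w(R_i)/w(R)$ equals $\frac{1}{w(R)}\sum_i d(R_i)^{\delta q^*}\,\mu(R_i)^{q^*/p}\,w(R_i)^{1-q^*/p}$. Bounding $d(R_i)^{\delta q^*}\le d(R)^{\delta q^*}$ and using that $1-q^*/p<0$ so $w(R_i)^{1-q^*/p}$ is controlled once we know a lower bound on $w(R_i)$ in terms of $w(R)$ and the volume ratio — this is exactly where the $A_{q,\cR}$ (equivalently $A_{\infty,\cR}$) condition enters, via the quantitative reverse estimate $\frac{w(R_i)}{w(R)}\gtrsim \big(\frac{|R_i|}{|R|}\big)^{\theta}$ with $\theta$ depending on $[w]_{A_{q,\cR}}$. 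Optimizing $\theta$ (or rather, choosing $q^*$ precisely as in \eqref{eq:ptimes-Aq} so the exponent of $|R_i|/|R|$ in the resulting sum is exactly $1$) collapses $\sum_i |R_i|/|R|\le 1$ and yields $\big(\sum_i a(R_i)^{q^*}\,w(R_i)/w(R)\big)^{1/q^*}\le c\,a(R)$ with $c$ controlled by $B_{w,q}$; this is the mechanism already used implicitly to prove Theorem~\ref{thm:PoSo-Aq-diam-delta}, and the weak-norm statement here is lighter since we do not need the truncation/interpolation step.

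Once $a\in D_{q^*,\cR}(w)$ is established, Theorem~\ref{thm:AutomejoraweakcR} applied with $p$ replaced by $q^*$ and with $P_Rf$ the relevant polynomial (here degree $0$, i.e. the average, since the starting hypothesis \eqref{eq:Sarting-PoSo-Aq} uses $P_Rf$) gives immediately
\begin{equation*}
\big\|f-P_Rf\big\|_{L^{q^*,\infty}(R,\frac{w\,dx}{w(R)})}\le c\,q^*\,[w]_{A_{\infty,\cR}}\,\|a\|_{D_{q^*,\cR}(w)}\,a(R).
\end{equation*}
It then remains to absorb the factors $q^*$, $[w]_{A_{\infty,\cR}}$ and $\|a\|_{D_{q^*,\cR}(w)}$ into the claimed bound $c_n\,\delta^{-1}$. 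Here one uses that $q^*\le p_{\delta}^*\le n/(n/p-\delta)\cdot$ (constant), and more importantly that the definition \eqref{eq:ptimes-Aq} forces $1/p-1/q^* = \frac{\delta}{n}\,\frac{1}{q+\log[w]_{A_{q,\cR}}}$, so $q^*$ stays bounded (by $\sim n/\delta$ up to dimensional constants) \emph{uniformly} in the weight; and the logarithmic factors coming from $[w]_{A_{\infty,\cR}}$ and from $\|a\|_{D_{q^*,\cR}(w)}$ — each of the form $\log[w]$ or $B_{w,q}$ — are exactly compensated by the smallness of $1/p-1/q^*$, i.e. by the extra room we built into the exponent. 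Tracking these cancellations carefully is the only delicate bookkeeping; morally it is the same computation that produces the $B_{w,q}\,C_n\,\delta^{-1}$ bound in Theorem~\ref{thm:PoSo-Aq-diam-delta}, but without the $B_{w,q}$ blow-up because the weak norm does not require us to push $q^*$ all the way and self-improve.

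The main obstacle I anticipate is making the dependence on the weight genuinely \emph{disappear} in the weak estimate (the constant is purely dimensional, $c_n\delta^{-1}$, with no $B_{w,q}$), in contrast to the strong estimate. This requires choosing the quantitative $A_\infty$ exponent $\theta=\theta([w]_{A_{q,\cR}})$ in the reverse volume–measure inequality and the target exponent $q^*$ in \eqref{eq:ptimes-Aq} in a perfectly matched way, so that the product $q^*\cdot[w]_{A_{\infty,\cR}}\cdot\|a\|_{D_{q^*,\cR}(w)}$ telescopes against a hidden factor $(1/p-1/q^*)$ that is itself $\sim \delta/(n\log[w])$. I would isolate this as a lemma: for $a$ of the form \eqref{eq:model-d(Q)^delta-a(Q)} and $q^*$ as in \eqref{eq:ptimes-Aq}, one has $\|a\|_{D_{q^*,\cR}(w)}\le c\,\big(q+\log[w]_{A_{q,\cR}}\big)^{1/q^*}\le c'$ uniformly, after which the theorem follows by a one-line substitution into Theorem~\ref{thm:AutomejoraweakcR}. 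Everything else — disjointness of the $R_i$, $d(R_i)\le d(R)$, the strong-$A_\infty$ reverse-doubling on rectangles — is standard and already available from Section~\ref{sec:weights}.
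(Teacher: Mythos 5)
Your overall architecture (verify a $D_{q^*,\cR}(w)$ condition for the functional \eqref{eq:model-d(Q)^delta-a(Q)} and feed it into Theorem \ref{thm:AutomejoraweakcR}) is the right starting point, and your claim that $\|a\|_{D_{q^*,\cR}(w)}$ can be made uniformly bounded by tuning the exponent is correct: this is exactly what Lemma \ref{lem:p^*} delivers, since with $M=1+\log[w]_{A_{q,\cR}}^{1/q}$ one gets $\|a\|\le [w]_{A_{q,\cR}}^{\delta/(nqM)}\le e^{\delta/n}$. The genuine gap is in the last step: the conclusion of Theorem \ref{thm:AutomejoraweakcR} carries a factor $[w]_{A_{\infty,\cR}}$ that enters \emph{linearly}, not logarithmically, and no choice of $q^*$ or of the reverse volume--measure exponent $\theta$ can telescope it away. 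Shrinking the Sobolev exponent only tames quantities of the form $[w]_{A_{q,\cR}}^{\delta p^*/(nqM)}$, i.e.\ powers of the weight constant that become bounded when $M\sim\log[w]_{A_{q,\cR}}$; it does nothing to the standalone multiplicative factor $[w]_{A_{\infty,\cR}}$ in the weak-type conclusion. You describe this factor as one of the ``logarithmic factors,'' which it is not, and consequently the promised cancellation against $1/p-1/q^*\sim \delta/(n\log[w]_{A_{q,\cR}})$ does not exist for weights with large $A_{q,\cR}$ constant. Your isolated lemma, even if proved, still leaves a bound of the form $c\,q^*\,[w]_{A_{\infty,\cR}}\,a(R)$, which is not $c_n\,\delta^{-1}a(R)$.

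The paper resolves this with a dichotomy that your plan is missing. If $[w]_{A_{q,\cR}}\ge e^q$, one does not use Theorem \ref{thm:AutomejoraweakcR} at all: the \emph{strong} estimate of Theorem \ref{thm:PoSo-Aq-diam-delta} already holds with constant $B_{w,q}C_n\delta^{-1}$, and in that regime $B_{w,q}\le 2$, so the weak estimate follows with a dimensional constant. If instead $[w]_{A_{q,\cR}}\le e^q$ (the flat case, where the strong route blows up), then $[w]_{A_{\infty,\cR}}\le[w]_{A_{q,\cR}}\le e^q\le e^n$ is itself dimensional, so the linear factor in Theorem \ref{thm:AutomejoraweakcR} is harmless; the paper then verifies the $D_{p_1^*,\cR}(w)$ condition at the exponent $p_1^*$ given by $1/p-1/p_1^*=\delta/(nq)$, with constant $e^{\delta/n}$ (using the flatness hypothesis again in the computation), and finally passes from $L^{p_1^*,\infty}$ to $L^{p_w^*,\infty}$ by the weak-norm form of Jensen's inequality, since $p_1^*>p_w^*$. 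If you add this case split, the rest of your computation goes through.
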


The idea is to use a specific choice of the functional $a$ as the starting point. Since a $(1,1)$-Poincar\'e inequality involving higher derivatives is known to hold on rectangles, we obtain the following result.

\begin{corollary}\label{cor:ptimes-Aq-m-derivatives}
Let $1 \le p < n $ and let $w\in A_{q,\cR}$ in $\mathbb{R}^n$ with $1\le q\le p$. Let $p_w^*$ as in the previous theorem. Then the following inequality holds, 
\begin{equation*}\label{eq:ptimes-Aq-gradient}
\left\|f -P_{R}f \right \|_{L^{p_w^*}\left (R,\frac{w\, dx}{w(R)}\right )} 
\leq B_{w,q} C_{n}\frac{1}{m} [w]_{A_{q,\cR}}^{\frac{1}{p}}
 \,d(R)^m
\left\|\nabla^m f \right \|_{L^{p}\left (R,\frac{w\, dx}{w(R)}\right )} 
\end{equation*}
for every rectangle $R\in \cR$, where as before, $B_{w,q}=\frac{1+ \log[w]_{A_{q,\cR}}^\frac{1}{q}}{ \log[w]_{A_{q,\cR}}^\frac{1}{q}}$.

We also have the estimate for the weak norm
\begin{equation}\label{eq:weaktype-Aq-gradient}
\left \|f -P_{R}f\right \|_{L^{p_w^*,\infty}(R,\frac{w dx}{w(R)})}	\leq \, C_{n,p} \frac{1}{m} [w]_{A_{q,\cR}}^{\frac{1}{p}}
 \,d(R)^m
\left\|\nabla^m f \right \|_{L^{p}\left (R,\frac{w\, dx}{w(R)}\right )}. 
\end{equation}
\end{corollary}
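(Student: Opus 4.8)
The plan is to obtain both inequalities by specializing Theorem \ref{thm:PoSo-Aq-diam-delta} and Theorem \ref{thm:PoSo-Aq-diam-delta-weak} to a concrete functional coming from a higher-order $(1,1)$-Poincar\'e inequality on rectangles. The first step is to record such a starting estimate: for $R\in\cR$ and $m\ge 1$, one has
\begin{equation*}
\frac{1}{|R|}\int_R |f-P_Rf|\,dx \le c_{n,m}\, d(R)^m\, \frac{1}{|R|}\int_R |\nabla^m f|\,dx,
\end{equation*}
where $P_Rf$ is the optimal polynomial of degree less than $m$ from Section \ref{sec:polynomials}. This is the higher-order analogue of \eqref{eq:Poincare-(1,1)}; on a rectangle it follows either by iterating the first-order convex-domain inequality of Acosta--Dur\'an in each parameter or by the representation-formula arguments referenced in the introduction. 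I would state it as a lemma (or cite the place in the paper where the $(1,1)$-version on $\cR$ with higher derivatives is proved) and treat it as given.

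The second step is to feed this into the abstract machinery. Take the Radon measure $\mu$ in \eqref{eq:model-d(Q)^delta-a(Q)} to be $d\mu = |\nabla^m f|^p\, w\,dx$ and set $\delta=m$, so that
\begin{equation*}
a(R)=d(R)^m\left(\frac{1}{w(R)}\int_R |\nabla^m f|^p\, w\,dx\right)^{1/p}
= d(R)^m\,\big\|\nabla^m f\big\|_{L^p(R,\frac{w\,dx}{w(R)})}.
\end{equation*}
To apply Theorem \ref{thm:PoSo-Aq-diam-delta} I must verify the pointwise starting hypothesis \eqref{eq:Sarting-PoSo-Aq}, i.e. $\frac{1}{|R|}\int_R|f-P_Rf|\le a(R)$. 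From the $(1,1)$-estimate this reduces to bounding $\frac{1}{|R|}\int_R|\nabla^m f|\,dx$ by $\left(\frac{1}{w(R)}\int_R|\nabla^m f|^p w\,dx\right)^{1/p}$; this is exactly where the hypothesis $w\in A_{q,\cR}$ with $q\le p$ enters, via H\"older's inequality with the weight $w$ — one writes $|\nabla^m f| = |\nabla^m f| w^{1/p} w^{-1/p}$, applies H\"older with exponents $p$ and $p'$, and controls $\left(\frac{1}{|R|}\int_R w^{-p'/p}\right)^{1/p'}\cdot\left(\frac{1}{|R|}\int_R w\right)^{1/p}$ by a power of $[w]_{A_{p,\cR}}^{1/p}$ (using $A_{q,\cR}\subset A_{p,\cR}$ and the standard bound $[w]_{A_{p,\cR}}\le[w]_{A_{q,\cR}}$ for $q\le p$). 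This produces the factor $[w]_{A_{q,\cR}}^{1/p}$ appearing in the conclusion. Thus $f$ satisfies \eqref{eq:Sarting-PoSo-Aq} for the functional $c\,[w]_{A_{q,\cR}}^{1/p}\,a(R)$, and both theorems apply to that rescaled functional, yielding the strong estimate with constant $B_{w,q}\,C_n\frac1m [w]_{A_{q,\cR}}^{1/p}$ and the weak estimate \eqref{eq:weaktype-Aq-gradient} with constant $C_{n,p}\frac1m[w]_{A_{q,\cR}}^{1/p}$. (The $\frac1\delta=\frac1m$ factor is inherited verbatim from the two theorems, and the exponent $p_w^*$ is the one in \eqref{eq:ptimes-Aq} with $\delta=m$.)

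I expect the only genuine obstacle to be the first step — pinning down and correctly stating the higher-order $(1,1)$-Poincar\'e inequality on rectangles with the right polynomial projection $P_R$ and a clean dimensional constant — since the abstract self-improving theorems do the rest essentially for free. A minor subtlety is checking that when $[w]_{A_{q,\cR}}=1$ (so $w$ is, up to normalization, a constant multiple of Lebesgue measure on every rectangle, forcing $B_{w,q}$ to be undefined and $p_w^*=p$) one falls back on the weak estimate \eqref{eq:weaktype-Aq-gradient}, exactly as the discussion after Theorem \ref{thm:PoSo-Aq-diam-delta} anticipates; the weak inequality, coming from Theorem \ref{thm:PoSo-Aq-diam-delta-weak}, has no $B_{w,q}$ and is valid for all $w\in A_{q,\cR}$ including this flat case.
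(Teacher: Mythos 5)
Your proposal is correct and follows essentially the same route as the paper: the authors take exactly your starting point (the unweighted higher-order $(1,1)$-Poincar\'e inequality with $P_Rf$, which they quote from Chua's Lemma 2.5 rather than re-deriving it), insert the weight by the same H\"older/$A_{p,\cR}$ argument to build the functional $a(R)=C[w]_{A_{p,\cR}}^{1/p}d(R)^m\|\nabla^m f\|_{L^p(R,\frac{w\,dx}{w(R)})}$ as an instance of the model functional with $\delta=m$, and then invoke Theorem \ref{thm:PoSo-Aq-diam-delta} for the strong bound and Theorem \ref{thm:PoSo-Aq-diam-delta-weak} for the weak bound in the flat-weight case, exactly as you do.
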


The proof of this result can be found in Section \ref{sec:proofscR}.

\begin{remark} We remark 
that for any nontrivial weight separated from the constant weight, say for instance
$[w]_{{A_{q,\cR}}}\ge e^q$, we can apply Theorem \ref{thm:PoSo-Aq-diam-delta} to an appropriate starting point. Without this non-degeneracy condition, and in particular in the case of flat weights, we avoid the logarithmic blowup but at the cost of obtaining only a weak norm estimate. Of course, when $m=1$ the truncation  argument can be carried out to reach the strong norm from \eqref{eq:weaktype-Aq-gradient}, to wit
\begin{equation*}
\left \|f -P_{R}f\right \|_{L^{p_w^*}(R,\frac{w dx}{w(R)})}	\leq \, c_{n}  [w]_{A_{q,\cR}}^{\frac{1}{p}}
 \,d(R)
\left\|\nabla f \right \|_{L^{p}\left (R,\frac{w\, dx}{w(R)}\right )}. 
\end{equation*}

\end{remark}

\subsection{Fractional \texorpdfstring{Poincar\'e}{Poincare}-Sobolev and eccentricity.}
This section is motivated by the fractional Poincaré-Sobolev inequalities \eqref{eq:FractionalPS}. First we will introduce a fractional type Poincaré estimate adapted to the  product space $\cR$  from which we will derive the  corresponding Poincaré-Sobolev type result. 

As before, we will introduce functionals satisfying the smallness preservation condition $SD_{p,\cR}^s$ for some choice of parameters 
where the concept of eccentricity will play a central role in the proofs.   
It comes from the natural interplay between the notion of diameter and measure for a given rectangle.

\begin{definition} 

Let $R\in \cR$  be any rectangle in $\mathbb{R}^{n}$.  We define the eccentricity  as the number 
\begin{equation}\label{eq:eccentricity}
e(R):=\frac{|R|^\frac{1}{n}}{d(R)}. 
\end{equation}
\end{definition}

The main property we use is the following observation:   all the involved families of rectangles will be obtained from dyadic partitions of a fixed initial rectangle. Therefore, for each instance of this initial choice of a rectangle, all of them are dyadic children of a given rectangle. The main feature here is that all of them have the same eccentricity.

\begin{lemma}\label{lem:eccentr}
Let $\tilde{R}$ be any dyadic descendant of $R \in \cR$. Then 
$$e(\tilde{R})=e(R).$$
\end{lemma}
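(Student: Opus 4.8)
The plan is to unravel the definitions directly: a dyadic descendant $\tilde R$ of a rectangle $R = I_1 \times \cdots \times I_n \in \cR$ is obtained by applying a dyadic subdivision independently in each coordinate, so after $k_j$ steps in the $j$-th coordinate the $j$-th side of $\tilde R$ has length $\ell_j/2^{k_j}$, where $\ell_j = |I_j|$. The subtlety in the multiparameter setting is that the $k_j$ need not all be equal; however, the standard dyadic grid on rectangles in $\cR$ subdivides \emph{all} coordinates simultaneously, so a dyadic descendant of generation $k$ has $k_j = k$ for every $j$. I would make this explicit first, stating precisely which dyadic structure on $\cR$ is meant (the one used throughout the paper, where the disjoint dyadic subrectangles in Definition~\ref{def:smallness} come from), so that $\tilde R$ has side lengths $\ell_j/2^{k}$ for a common $k \ge 0$.

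From there the computation is immediate. First I would compute the diameter: for a rectangle with side lengths $m_1,\dots,m_n$ one has $d(R) = \big(\sum_{j=1}^n m_j^2\big)^{1/2}$, so
\begin{equation*}
d(\tilde R) = \Big(\sum_{j=1}^n (\ell_j/2^{k})^2\Big)^{1/2} = 2^{-k} \Big(\sum_{j=1}^n \ell_j^2\Big)^{1/2} = 2^{-k}\, d(R).
\end{equation*}
Next the measure: $|\tilde R| = \prod_{j=1}^n (\ell_j/2^{k}) = 2^{-nk} \prod_{j=1}^n \ell_j = 2^{-nk}\,|R|$, hence $|\tilde R|^{1/n} = 2^{-k}\,|R|^{1/n}$. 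Dividing, the factors $2^{-k}$ cancel:
\begin{equation*}
e(\tilde R) = \frac{|\tilde R|^{1/n}}{d(\tilde R)} = \frac{2^{-k}\,|R|^{1/n}}{2^{-k}\,d(R)} = \frac{|R|^{1/n}}{d(R)} = e(R).
\end{equation*}
This is exactly the claim. One could also phrase it as: both the diameter and the $n$-th root of the volume are $1$-homogeneous under the common dyadic scaling $2^{-k}$, so their ratio is scale-invariant.

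The only point that requires care — and which I would flag as the single genuine obstacle — is justifying that a dyadic descendant scales by the \emph{same} dyadic factor in every coordinate; if one allowed independent dyadic refinements in each direction (which is the more permissive notion sometimes used for product bases), the lemma would be false, since eccentricity would then change. So the proof hinges on fixing, once and for all, the convention that the dyadic children of $R\in\cR$ are obtained by halving every side simultaneously — this is the convention under which the whole self-improving machinery (the families $\{R_i\}$ in \eqref{eq:SDp} and \eqref{eq:Dp}) is set up in this paper. Once that convention is stated, the rest is the two-line homogeneity computation above, and the lemma follows.
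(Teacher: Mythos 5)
Your proof is correct and is essentially the paper's own argument: the paper likewise observes that a dyadic descendant at level $j$ satisfies $|\tilde R| = |R|\,2^{-jn}$ and $d(\tilde R) = d(R)\,2^{-j}$, so the ratio defining the eccentricity is unchanged. Your explicit flagging of the convention that dyadic children halve every side simultaneously is a reasonable clarification of what the paper leaves implicit, but it does not change the substance of the computation.
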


Indeed, if $j$ be the dyadic level to which $\tilde{R}$ belongs. Then,
\begin{equation}\label{eq:eccentricity-children}
e(\tilde{R})=\frac{|\tilde{R}|^{1/n}}{d(\tilde{R})}=\frac{(|R|2^{-jn})^{1/n}}{d(R)2^{-j}}=e(R). 
\end{equation}

By introducing this notion of eccentricity, we now are able to formulate other corollaries from our general self-improving Theorem \ref{thm:PoSo-Aq-diam-delta}. We have the following  intermediate estimate.

\begin{corollary}\label{cor:fractional}
Let $w\in A_{q,\cR}$ in $\mathbb{R}^{n}$ with $1\le q\le p<n$. Let $p_w^*$ as in \eqref{eq:ptimes-Aq}, namely
\begin{equation*}
\frac{1}{p} -\frac{1}{ p_w^* }=\frac{\delta}{n}\frac{1}{q+\log [w]_{A_{q,\cR}}}.
\end{equation*}
Let $R\in \cR$ and 
$\delta>0$. Consider the function
\begin{equation}\label{eq:A(R,x)}
A(R,x)= \int_R \frac{|f(x)-f(y)|^p}{|x-y|^{n+\delta p}} \,dy, \qquad  \qquad x\in R.
\end{equation}
Then the following inequality holds, %
\begin{equation}\label{eq:fractional}
\left \|f-f_R \right \|_{L^{p_w^*}(R, \frac{wdx}{w(R)})}\leq C_{n,p,q}
\frac{1}{\delta}\, 
[w]^\frac{1}{p}_{A_{p,\cR}}\,
\frac{d(R)^{\delta}}{e(R)^{\frac{n}{p}}}\left( \frac{1}{w(R)} \int_R A(R,\cdot)w\,dx\right)^\frac{1}{p}.
\end{equation}

\end{corollary}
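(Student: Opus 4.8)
The plan is to deduce this from the general self-improving Theorem \ref{thm:PoSo-Aq-diam-delta} by choosing an appropriate starting functional $a$ and verifying the $L^1$-oscillation hypothesis \eqref{eq:Sarting-PoSo-Aq}. Theorem \ref{thm:PoSo-Aq-diam-delta} is stated for the functional $a(R)=d(R)^\delta\big(\mu(R)/w(R)\big)^{1/p}$ with $\mu$ an arbitrary Radon measure, and the natural choice here is $d\mu = A(R,\cdot)\,w\,dx$; but note a subtlety: $A(R,x)$ itself depends on $R$, so strictly speaking we fix the ambient rectangle $R$ once and for all and apply the theorem on dyadic subrectangles $R'\subset R$ with $\mu$ frozen to this $R$. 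Thus I would set $a(R') = d(R')^\delta\big(\tfrac{1}{w(R')}\int_{R'} A(R,\cdot)\,w\,dx\big)^{1/p}$ for dyadic $R'\subseteq R$, which is exactly of the form \eqref{eq:model-d(Q)^delta-a(Q)} with the (fixed) measure $d\mu=A(R,\cdot)w\,dx$. Since $A(R',x)\le A(R,x)$ for $R'\subseteq R$ (we are integrating over a smaller set), controlling the oscillation on $R'$ by $a(R')$ will reduce to the fractional $(1,1)$-type inequality.

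The first key step is therefore to establish the starting point \eqref{eq:Sarting-PoSo-Aq}: for every dyadic $R'\subseteq R$,
\[
\frac{1}{|R'|}\int_{R'}|f-f_{R'}|\,dx \;\le\; c\,d(R')^{\delta}\left(\frac{1}{w(R')}\int_{R'}A(R',\cdot)\,w\,dx\right)^{1/p}.
\]
Here I would first use $|f(x)-f_{R'}|\le \avgint_{R'}|f(x)-f(y)|\,dy$ and then Hölder's inequality in $y$ with exponents $p$ and $p'$, inserting the weight $|x-y|^{-(n+\delta p)/p}\cdot|x-y|^{(n+\delta p)/p}$, to bring in $A(R',x)^{1/p}$ together with a factor $\big(\int_{R'}|x-y|^{(n+\delta p)p'/p}\,dy\big)^{1/p'}\lesssim d(R')^{(n/p')+\delta}$ (using $|x-y|\le d(R')$ and crudely bounding the volume factor). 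Averaging in $x$ over $R'$ against $dx$, and then passing from the unweighted average of $A(R',x)^{1/p}$ to the weighted one using that $w\in A_{q,\cR}\subseteq A_{p,\cR}$ — this is where the factor $[w]_{A_{p,\cR}}^{1/p}$ enters, via the standard estimate $\avgint_{R'} g \lesssim [w]_{A_{p,\cR}}^{1/p}\big(\tfrac{1}{w(R')}\int_{R'}g^p w\big)^{1/p}$ applied to $g=A(R',\cdot)^{1/p}$ — yields the claimed starting inequality up to the eccentricity correction. The eccentricity appears because $d(R')^{n/p'} = d(R')^{n/p}\,d(R')^{-?}$... more precisely $d(R')^{n/p}/|R'|^{1/p}$ when one rewrites the volume normalization, and by Lemma \ref{lem:eccentr} this equals $e(R')^{-n/p}=e(R)^{-n/p}$, a constant throughout the dyadic tree below $R$. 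Collecting, the starting functional satisfies $a(R')\le c_{n,p,q}\,[w]_{A_{p,\cR}}^{1/p}\,e(R)^{-n/p}\cdot \tilde a(R')$ where $\tilde a$ is the ``honest'' functional $d(R')^\delta\big(\tfrac{1}{w(R')}\int_{R'}A(R,\cdot)w\big)^{1/p}$ appearing on the right of \eqref{eq:fractional}.

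The second step is to feed this into Theorem \ref{thm:PoSo-Aq-diam-delta}, which immediately gives
\[
\left(\frac{1}{w(R)}\int_R|f-f_R|^{p_w^*}w\,dx\right)^{1/p_w^*}\le B_{w,q}\,C_n\,\frac{1}{\delta}\,a(R),
\]
and then substituting the bound for $a(R)$ in terms of $\tilde a(R)=d(R)^\delta\big(\tfrac{1}{w(R)}\int_R A(R,\cdot)w\big)^{1/p}$ produces \eqref{eq:fractional}, absorbing the bounded quantity $B_{w,q}$ into $C_{n,p,q}$ (here one uses $1\le q\le p$ so $B_{w,q}$ is controlled, or one simply keeps it; the statement writes $C_{n,p,q}\tfrac{1}{\delta}$). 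The main obstacle I anticipate is the bookkeeping around the $R$-dependence of $A(R,\cdot)$: one must be careful that the functional fed to the abstract theorem is defined with a genuinely fixed measure, and that the inequality $A(R',\cdot)\le A(R,\cdot)$ is used in the right place (in verifying the starting point, not in applying the self-improvement), so that no circularity creeps in. A secondary technical point is making the Hölder step produce exactly the power $\delta$ on $d(R')$ together with the correct eccentricity exponent $n/p$ — this requires tracking the volume normalizations carefully, but it is routine once the scheme is set up.
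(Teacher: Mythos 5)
Your proposal is correct and follows essentially the same route as the paper: one verifies the starting $L^1$-oscillation bound by H\"older's inequality together with the $A_{p,\cR}$ condition and the crude estimate $|x-y|\le d(R)$ (which is exactly where the factor $d(R)^{\delta}/e(R)^{n/p}$ arises), notes that the eccentricity is constant on the dyadic descendants of $R$ so the functional still falls under Lemma \ref{lem:p^*}, and then invokes Theorem \ref{thm:PoSo-Aq-diam-delta}. Your extra care about freezing the measure $A(R,\cdot)w\,dx$ at the top rectangle and using the monotonicity $A(R',\cdot)\le A(R,\cdot)$ only in the starting point is a legitimate (and slightly more explicit) way of handling the $R$-dependence that the paper treats implicitly.
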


\begin{remark}
We will prove this result from an initial inequality \eqref{eq:general-starting-a(R)} with a non standard functional $a(R)$ involving the quantity $A(R,x)$, namely
\begin{equation*}
\frac{1}{|R|}\int_R |f-f_R|\leq [w]^\frac{1}{p}_{A_{p,\cR}}\,\frac{d(R)^{\delta}}{e(R)^\frac{n}{p}}\left(\frac{1}{w(R)}\int_R A(R,x)\,w(x)dx \right)^\frac{1}{p}.
\end{equation*}
\end{remark}

\begin{remark}
There should be here a result as in Theorem \ref{thm: A1PSFractBBM} with the extra factor  $(1-\delta)^{\frac1p}$ at least in the case $w\in A_{1,\cR}$. We don't know how to do it. However, we proved the corresponding variation of this result in the context of $\ccR$ in Theorem \ref{thm:AutomejorastrongBBM-A1-ccR}.
\end{remark}

Note that,   on the one hand  the above starting point  does not involve any derivative, but on the other hand  it is imposing a  somewhat stronger control over the $L^1$ oscillation, since it is known that for any $0<\delta<1$, and any $p\geq 1$ one has
\begin{equation*}
\ell(Q)^\delta \left(\avgint_Q\int_Q\frac{|f(x)-f(y)|^p}{|x-y|^{n+\delta p}}dydx\right)^{1/p} \lesssim C_{\delta,p,n} \ell(Q)\left(\avgint_Q |\nabla f|^p\right)^{1/p}.
\end{equation*}  
We will in fact provide the proof of an improved version of this inequality for the case $p=1$ in Lemma \ref{lem:oneparameterFractNabla}, inequality \eqref{eq:roughfractionalPI}. The proof is included in
the Appendix \ref{sec:App:oneparameterFractNabla}.

\subsection{A weaker starting point} 
 
The aim of this section is to show that we can replace the $L^1$ mean oscillation in \eqref{eq:UnWeightedStartingPointL1} as initial hypothesis by a weaker condition to derive the same results. Indeed, as shown in Theorems \ref{thm:AutomejorastrongcR} and Theorem \ref{thm:PoSo-Aq-diam-delta},  the natural starting point is given by the expression
\begin{equation}\label{eq:L1-starting}
\avgint_R |f(x)-f_R|\,dx \le a(R)
\end{equation}
where $a$ is a functional satisfying some sort of discrete summation condition. We show in next theorem that we can replace the $L^1$ mean oscillation in \eqref{eq:L1-starting} 
by $L^{\delta}$, $0<\delta<1$ weakening the initial assumption. This idea was already considered in 
\cite{LuPerez02} and \cite{LernerPerez-poin-scandi} but the results and the methods are not so precise as the ones we obtain here within a different  context.

\begin{theorem}\label{thm:delta-StartingPoint}   
Let  $f$ be a measurable function and let $\delta\in (0,1)$. Suppose that there is a functional $a$ such that,  
\begin{equation}\label{eq:delta-StartingPoint}
\inf_{c\in \mathbb R}\left (\avgint_R |f(x)-c|^\delta\right )^{\frac{1}{\delta}} dx \le a(R) \qquad  R\in \cR.
\end{equation}
If the functional $a$ satisfies the $D_{p,\cR}$  condition for some $p>1$ or  the $SD_{1,\cR}^s$ condition for some $s>0$, then there is a self-improving to $L^1$, namely 
\begin{equation}\label{eq:L^1-goal}
\inf_{c\in \mathbb R}\avgint_R |f(x)-c| dx \leq C\, a(R), \qquad R\in \cR,
\end{equation}
where $C$ depends on $p,\delta,s,\|a\|_{D_{p,\cR} }$ or \, $\|a\|_{SD^s_{1,\cR}}$.
 \end{theorem}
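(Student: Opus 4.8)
The plan is to reduce the $L^\delta$ starting point to the $L^1$ starting point by exploiting the flexibility in the choice of the centering constant $c$, and then feeding the result into the machinery of Theorems \ref{thm:AutomejorastrongcR} or \ref{thm:AutomejoraweakcR}. First I would fix $R\in\cR$ and, for each dyadic subrectangle $R'\subset R$, pick a near-minimizer $c_{R'}$ for the infimum in \eqref{eq:delta-StartingPoint}, so that $\left(\avgint_{R'}|f-c_{R'}|^\delta\right)^{1/\delta}\le 2\,a(R')$. The goal is to upgrade the exponent from $\delta$ to $1$ on the fixed rectangle $R$. The standard device (as in \cite{LuPerez02, LernerPerez-poin-scandi}) is a good-$\lambda$ / Calder\'on--Zygmund stopping-time argument applied to $|f-c_R|$ at height a large multiple of its $L^\delta$-average: one subdivides $R$ dyadically, selecting maximal subrectangles $\{R_i\}$ on which the $L^\delta$-average of $|f-c_R|$ exceeds $\Lambda\,a(R)$. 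By maximality the selected family is $L$-small in the sense of \eqref{eq:smallness} with $L$ proportional to $\Lambda^\delta$ (here one uses $\delta<1$ to absorb the $L^\delta$ quasi-norm and the elementary inequality $|\{|g|>t\}|\le t^{-\delta}\int|g|^\delta$), while on the complement $R\setminus\bigcup_i R_i$ one has pointwise control $|f-c_R|\lesssim \Lambda\,a(R)$ a.e.

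Next I would estimate $\avgint_R|f-c_R|\,dx$ by splitting the integral into the good part (where the pointwise bound gives a contribution $\lesssim \Lambda\,a(R)$) and the bad part $\sum_i\int_{R_i}|f-c_R|$. On each $R_i$ I replace $c_R$ by the local center $c_{R_i}$ at the cost of $|c_R-c_{R_i}|\,|R_i|$, which is controlled via the triangle inequality in $L^\delta$ and the stopping condition, and I bound $\int_{R_i}|f-c_{R_i}|$ by H\"older (or rather the nesting $L^1\subset L^\delta$ fails, so one instead bounds $\avgint_{R_i}|f-c_{R_i}|$ by iterating: this is exactly where we cannot close directly, and must invoke the summation condition). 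Concretely, the bad term is handled by writing $\sum_i a(R_i)\frac{|R_i|}{|R|}$ and using either the $SD_{1,\cR}^s$ condition \eqref{eq:SDp} — which yields $\sum_i a(R_i)\frac{|R_i|}{|R|}\le \|a\|_{SD^s_{1,\cR}}\left(\frac{|\bigcup R_i|}{|R|}\right)^{1/s}a(R)\le \|a\|\,L^{-1/s}a(R)$, small if $\Lambda$ (hence $L$) is chosen large — or the $D_{p,\cR}$ condition with $p>1$, where one first applies H\"older in $i$ to get $\sum_i a(R_i)\frac{|R_i|}{|R|}\le\big(\sum_i a(R_i)^p\frac{|R_i|}{|R|}\big)^{1/p}\big(\frac{|\bigcup R_i|}{|R|}\big)^{1/p'}\le \|a\|_{D_{p,\cR}}\,L^{-1/p'}a(R)$, again small for large $L$. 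In either case one obtains $\avgint_R|f-c_R|\le \big(C\Lambda + \eta(\Lambda)\big)a(R)$ where $\eta(\Lambda)\to0$; since $c_R$ is an admissible competitor for the left side of \eqref{eq:L^1-goal}, fixing $\Lambda$ gives the claim with $C=C(p,\delta,s,\|a\|)$.

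The main obstacle is the bad-part estimate: unlike the $L^1\to L^{p^*}$ self-improvement, here we are going \emph{downward} in the Lebesgue scale's dual direction (from a weak $L^\delta$-type control to the $L^1$ average), so the naive iteration $\avgint_{R_i}|f-c_{R_i}|\le\cdots$ does not terminate unless the summability of $a$ over the stopping family is genuinely used to make the geometric-series constant strictly less than one. The delicate point is to verify that the stopping family is $L$-small with $L$ as large as we please by taking $\Lambda$ large — this requires that the weak-type bound $w$-free (Lebesgue measure) version of Chebyshev applied to $|f-c_R|^\delta$ interacts correctly with the normalization, and that replacing $c_R$ by $c_{R_i}$ on selected cubes does not destroy the smallness; both are routine but must be done carefully with the quasi-norm triangle inequality for $L^\delta$, $\delta<1$. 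Once the geometric decay is in hand, summing the telescoping contributions and optimizing in $\Lambda$ is standard.
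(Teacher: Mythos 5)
Your plan follows essentially the same route as the paper's proof: a Calder\'on--Zygmund stopping time on the $L^\delta$ quantity at a large level, a good/bad splitting with pointwise control off the stopping set, a local recentering on the selected rectangles, and the $SD^s_{1,\cR}$ (or $D_{p,\cR}$ plus H\"older) condition combined with the smallness of the stopping family to produce a contraction. The paper recenters via the elementary bound $||a|^\delta-|b|^\delta|\le|a-b|^\delta$ and Jensen, arriving at the double average $\avgint_{R_i}\avgint_{R_i}|f(x)-f(y)|\,dy\,dx$, which is comparable to $\inf_c\avgint_{R_i}|f-c|$; your route via $|c_R-c_{R_i}|$ and the $L^\delta$ quasi-triangle inequality is an equivalent bookkeeping.

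One step, as written, is circular and needs to be repaired exactly as your final paragraph hints. You cannot bound the bad part by $\sum_i a(R_i)\tfrac{|R_i|}{|R|}$ directly, because $\avgint_{R_i}|f-c_{R_i}|\le C\,a(R_i)$ is the conclusion of the theorem, not the hypothesis; the hypothesis only controls the $L^\delta$ oscillation on $R_i$. The correct bound is $\avgint_{R_i}|f-c_{R_i}|\le X\,a(R_i)$ with $X:=\sup_{R\in\cR}\inf_c\avgint_R|f-c|/a(R)$, so the bad part contributes $X\,\eta(\Lambda)\,a(R)$ with $\eta(\Lambda)\to 0$, and one closes by absorbing $X$ after choosing $\Lambda$ (equivalently $L$) large. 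For this absorption to be legitimate you must know \emph{a priori} that $X<\infty$; the paper secures this by (i) replacing $a$ by $a_\varepsilon=a+\varepsilon$, and (ii) truncating $f$ to $f_m=\min\{|f|,m\}$ and checking, via the pointwise inequality $|f_m-\lambda_m|\le|f-\lambda|$, that the truncations satisfy the same $L^\delta$ starting hypothesis \eqref{eq:delta-StartingPoint} uniformly in $m$. Without this reduction, "fixing $\Lambda$ gives the claim" does not follow. With it, your outline matches the paper's argument.
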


\begin{remark}
This is also very interesting even in the case of $BMO$. In that case $a(R)=1$ and the question can be phrased in terms of minimal conditions on the starting estimate \eqref{eq:delta-StartingPoint} to conclude the membership to $BMO$. An approach to this problem (in the context of cubes) using Bellman functions can be found in \cite{LSSVZ-BMO}. However, this result has been improved in several ways in the more recent work \cite{CPR}. It may be possible to adapt these ideas to the context of the current paper. 
\end{remark}

\subsection{Analysis in \texorpdfstring{$\ccR$}{ccR}} \label{sec:analysis in ccR}\

We will show that we can apply our techniques to obtain other type of results for the class $\ccR$ which denotes the family formed by rectangles $R$ of the form $R=I_1\times I_2 \subset \mathbb{R}^{n}$ where $I_1\subset \mathbb{R}^{n_1}$ and $I_2\subset \mathbb{R}^{n_2}$ are cubes with sides parallel to the coordinate axes.  We will discuss different possibilities for the starting inequality \eqref{eq:general-starting-a(R)} 
which will lead to different results on Poincar\'e-Sobolev  type inequalities more in the spirit of 
\cite{ShiTor, LuWheeden}.

The first results in the product case context were obtained by 
X. L. Shi and A. Torchinsky in \cite{ShiTor}. They obtained $(q,p)$-Poincaré-Sobolev  inequalities  with weights satisfying certain strong conditions.  Later on, Seng-Kee Chua \cite{Ch1995} obtained some results assuming mixed norm assumptions on the gradient which we will not consider. Later on, G. Lu and R. Wheeden also derived in \cite{LuWheeden} Poincaré-Sobolev 
inequalities in the context of vector fields. We improve these results in the classical context. This case could be certainly treated as a particular case of the general theory developed for the family $\cR$, since the selfimproving method works in the same way. But here we will consider different starting points, better adapted to this particular geometry.

We will use the following notation: for a given function  $f:U\to\mathbb{R}$ defined on the open set $U\subset \mathbb{R}^{n_1} \times \mathbb{R}^{n_2}$, we will write $f(x)=f(x_1,x_2)$ where $x_1$ stands for the first $n_1$ variables and $x_2$ stands for the remaining $n_2$ variables. 
$\nabla_1f$ will denote the partial gradient of $f$ containing the $x_1$-derivatives and 
similarly  $\nabla_2f$ will denote the partial gradient of $f$ containing the $x_2$-derivatives. 
Recall that the family $\ccR$ is defined by rectangles of the form $R=I_1\times I_2$ where in $I_1\subset \mathbb{R}^{n_1}$ and $I_2 \subset \mathbb{R}^{n_2}$ are cubes with sides parallel to the coordinate axes and $n:=n_1+n_2$.  As already mentioned, the classical method to derive these type of results is based on  finding a representation formula and this is the way was done in the first paper in this context \cite{ShiTor}.   We follow a completely different path using some of the key ideas from \cite{FPW98}.

\begin{theorem}\label{thm:AutomejorastrongccR}  \, Let $w\in A_{q,\ccR}$ in $\mathbb{R}^{n}$ and $1 \leq q \leq p.$
Let also 
$$\frac1p-\frac1{p^*}= \frac1{n}\frac1{(q+\log [w]_{A_{q,\ccR}} )}.
$$
Then, there exists a constant $c=c(n,p,q)>0$ such that for every Lipschitz function $f$ and any $R=I_1\times I_2\in \ccR$, 
\begin{equation*}
\left\|f-f_R\right \|_{L^{p^*}(R, \frac{wdx}{w(R)})}
\leq c\,
[w]_{A_{p,\ccR}}^{\frac{1}{p}} \left (a_1(R)+a_2(R)
 \right ),
\end{equation*}
where 
\begin{equation*}
a_1(R)= \ell(I_1) 
\left\|\nabla_1 f\right \|_{L^{p}(R, \frac{wdx}{w(R)})}
\quad \text{ and }\quad 
a_2(R)= \ell(I_2) 
\left\|\nabla_2 f\right \|_{L^{p}(R, \frac{wdx}{w(R)})}.
\end{equation*}

\end{theorem}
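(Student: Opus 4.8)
The strategy is to reduce Theorem~\ref{thm:AutomejorastrongccR} to the general self-improving machinery already developed for the basis $\cR$, but working instead with the basis $\ccR$ and with the biparameter functional $a(R)=a_1(R)+a_2(R)$. The first step is to establish the correct starting point: a generalized $(1,1)$-Poincar\'e inequality of the form
\begin{equation*}
\avgint_R |f-f_R|\,dx \le c_n\Big(\ell(I_1)\avgint_R|\nabla_1 f|\,dx + \ell(I_2)\avgint_R|\nabla_2 f|\,dx\Big), \qquad R=I_1\times I_2\in\ccR.
\end{equation*}
This should follow by iterating the Acosta--Dur\'an inequality \eqref{eq:Poincare-(1,1)} in each block of variables separately: write $f(x_1,x_2)-f_R$ as a telescoping sum through partial averages (integrate out $x_2$ first, then $x_1$), apply the convex-domain $(1,1)$-Poincar\'e inequality to the cube $I_1$ with the parameter $x_2$ frozen, and symmetrically for $I_2$, then average. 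One must be slightly careful because the average $f_R$ is a double average; the standard device is to introduce the intermediate function $x_1\mapsto \avgint_{I_2}f(x_1,\cdot)\,dx_2$ and control $f-f_R$ by the oscillation in $x_1$ of this intermediate function plus the oscillation in $x_2$ of $f$ itself.

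**Second step: verifying the summation condition.** With the unweighted starting point in hand, I would pass to the weighted $L^1$ starting point needed to feed the self-improving theorem. Since $w\in A_{q,\ccR}\subset A_{p,\ccR}$, the usual $A_p$-type argument (H\"older with exponents $p$ and $p'$ against the weight, then the $A_p$ constant) upgrades the right-hand side to
\begin{equation*}
\avgint_R|f-f_R|\,dx \le c_n\,[w]_{A_{p,\ccR}}^{1/p}\Big(a_1(R)+a_2(R)\Big),
\end{equation*}
where now $a_i(R)=\ell(I_i)\|\nabla_i f\|_{L^p(R,\frac{w\,dx}{w(R)})}$. Then I must check that the functional $a=a_1+a_2$ (or rather the appropriately normalized version) satisfies an $SD^s$-type or $D_p$-type condition on $\ccR$ with the right exponent $s=n$. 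The key geometric point is that for a family of pairwise disjoint dyadic subrectangles $\{R_i=I_1^{(i)}\times I_2^{(i)}\}$ of $R$, one has $\ell(I_j^{(i)})\le \ell(I_j)$ and $\sum_i w(R_i)\le w(R)$; combined with the fact that $\sum_i \|\nabla_j f\|_{L^p(R_i,w)}^p \le \|\nabla_j f\|_{L^p(R,w)}^p$ (disjointness), a direct computation gives the discrete summability of each $a_j$ with the appropriate power of $|\bigcup R_i|/|R|$ coming from the eccentricity/sidelength relation $\ell(I_j^{(i)})=\ell(I_j)\,2^{-k}$ at dyadic level $k$, exactly as in \eqref{eq:eccentricity-children}.

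**Third step: running the self-improving engine.** Once $a$ is known to satisfy the relevant $SD^s_{p,\ccR}(w)$ (or at least $D_{p,\ccR}(w)$) condition, I would invoke the $\ccR$-analogue of Theorem~\ref{thm:AutomejorastrongcR} (equivalently Theorem~\ref{thm:AutomejoraweakcR} followed by truncation, since $m=1$ here and the gradient functional admits the truncation argument), together with the sharp $A_\infty$ bookkeeping from \cite{CantoPerez} that produces the exponent $p^*$ defined by $\frac1p-\frac1{p^*}=\frac1n\frac1{q+\log[w]_{A_{q,\ccR}}}$. This yields
\begin{equation*}
\Big(\frac{1}{w(R)}\int_R|f-f_R|^{p^*}w\,dx\Big)^{1/p^*}\le c(n,p,q)\,[w]_{A_{p,\ccR}}^{1/p}\big(a_1(R)+a_2(R)\big),
\end{equation*}
which is the claim. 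I expect the main obstacle to be the first step: proving the biparameter $(1,1)$-Poincar\'e starting point with a clean constant, because the telescoping through partial averages must be organized so that no cross terms with the ``wrong'' gradient appear, and because a naive iteration tends to produce an extra averaging in one variable of a gradient in the other variable — one has to check that such terms are either absent or absorbed. The verification of the $SD^s$ condition for the sum $a_1+a_2$ is comparatively routine once one observes that the $SD^s$ class is closed under sums (the worst of the two exponents/constants dominates), but keeping the dependence on $n$, $p$, $q$ transparent through the $\log[w]$ normalization requires some care.
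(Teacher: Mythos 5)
Your proposal is correct and its overall architecture coincides with the paper's: establish the biparameter $(1,1)$-Poincar\'e starting point (Lemma \ref{lem:(1,1)PI-ccR}), upgrade it with H\"older and the $A_{p,\ccR}$ constant to the functional $a_1+a_2$, verify the $SD^s$-type condition using the invariance of the sidelength ratio under dyadic subdivision together with closure of the condition under sums (Lemma \ref{lem:Dp-likeConditions-ccR} and Remark \ref{rem:SumOfTwo}), and then run the $\ccR$-analogue of the self-improving theorems, with the weak estimate plus truncation available as the route that survives for flat weights. The one place where you genuinely deviate is the proof of the starting inequality: you propose telescoping through the partial average $g(x_1)=\avgint_{I_2}f(x_1,\cdot)\,dx_2$ and applying Acosta--Dur\'an on each block, whereas the paper derives it either from the Lu--Wheeden pointwise representation formula (Proposition \ref{pro:LW}) or from a fractional Poincar\'e inequality in each block (Proposition \ref{pro:1-1FPIproductSpaces}, sharpened in Theorem \ref{thm:BBMbiparametrico}). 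Your telescoping works and your worry about cross terms is unfounded: writing $f-f_R=(f-g(x_1))+(g(x_1)-g_{I_1})$, the first piece is controlled for each frozen $x_1$ by $\ell(I_2)\avgint_{I_2}|\nabla_2 f(x_1,\cdot)|$, and the second by $\ell(I_1)\avgint_{I_1}|\nabla_1 g|\le \ell(I_1)\avgint_{R}|\nabla_1 f|$ after moving the gradient inside the $x_2$-average; no mixed terms appear. Your route is more elementary; the paper's fractional route buys the stronger intermediate statement with the $(1-\delta)^{1/p}$ gain that is reused in Theorem \ref{thm:AutomejorastrongBBM-A1-ccR}. The only point you leave slightly vague is where the logarithmic normalization in $p^*$ comes from: it is produced by the optimization $M=1+\log[w]_{A_{q,\ccR}}^{1/q}$ in the $\ccR$-version of Theorem \ref{thm:PoSo-Aq-diam-delta} (or Lemma \ref{lem:Dp-likeConditions-ccR}(3)), not directly by the John--Nirenberg bookkeeping of \cite{CantoPerez}, and one must either split into the cases $[w]_{A_{q,\ccR}}\ge e^q$ (strong estimate directly) and flat weights (weak estimate plus truncation), as the paper does, or use the weak-plus-truncation route uniformly.
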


This result will be proved in Section \ref{sec:Bi-parameterPSI}.
As far as we know these results are not known in the literature (for instance \cite{Ch1995, LuWheeden}) even for the case $p>1$ which is usually simpler.
We will show that this theorem 
will follow essentially from the following $(1,1)$-Poincaré inequality in product spaces 
which seems to be unknown. 
\begin{lemma}\label{lem:(1,1)PI-ccR} 
There exists a dimensional constant $c>0$ such that for every Lipschitz function 
$f:\mathbb{R}^{n_1} \times \mathbb{R}^{n_2}\to \mathbb{R}$ and for any $R=I_1\times I_2\in \ccR$,
\begin{equation} \label{eq:PIsidelenght}
\avgint_{R} |f - f_{R}|\leq c\, \ell(I_1)\avgint_{R} |\nabla_1 f| + c\,\ell(I_2)\avgint_{R} |\nabla_2 f|.
\end{equation}

\end{lemma}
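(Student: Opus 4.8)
The plan is to prove the bi-parameter $(1,1)$-Poincar\'e inequality \eqref{eq:PIsidelenght} by a telescoping/slicing argument that reduces everything to the classical one-parameter estimate \eqref{eq:Poincare-(1,1)} applied on cubes in $\mathbb{R}^{n_1}$ and in $\mathbb{R}^{n_2}$ separately. Write $R=I_1\times I_2$ with $I_1\subset\mathbb{R}^{n_1}$, $I_2\subset\mathbb{R}^{n_2}$ cubes, so that $d(I_1)=\sqrt{n_1}\,\ell(I_1)$ and similarly for $I_2$; hence the Acosta--Dur\'an inequality \eqref{eq:Poincare-(1,1)} on a cube reads $\avgint_{I}|g-g_I|\le c_n\,\ell(I)\avgint_I|\nabla g|$. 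The key elementary decomposition is
\begin{equation*}
f(x_1,x_2)-f_R = \bigl(f(x_1,x_2)-\langle f(\cdot,x_2)\rangle_{I_1}\bigr) + \bigl(\langle f(\cdot,x_2)\rangle_{I_1} - f_R\bigr),
\end{equation*}
where $\langle f(\cdot,x_2)\rangle_{I_1}:=\avgint_{I_1}f(y_1,x_2)\,dy_1$ is the partial average in the first block of variables. The first term, integrated in $x_1$ over $I_1$ for fixed $x_2$, is exactly a one-parameter oscillation of $x_1\mapsto f(x_1,x_2)$ on the cube $I_1$, so \eqref{eq:Poincare-(1,1)} gives $\avgint_{I_1}|f(x_1,x_2)-\langle f(\cdot,x_2)\rangle_{I_1}|\,dx_1\le c\,\ell(I_1)\avgint_{I_1}|\nabla_1 f(x_1,x_2)|\,dx_1$; averaging this over $x_2\in I_2$ produces the term $c\,\ell(I_1)\avgint_R|\nabla_1 f|$.

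For the second term, note that $g(x_2):=\langle f(\cdot,x_2)\rangle_{I_1}$ is a function on $I_2$ alone, and $f_R=\avgint_{I_2}g(x_2)\,dx_2=g_{I_2}$. So $\avgint_{I_2}|g(x_2)-f_R|\,dx_2=\avgint_{I_2}|g-g_{I_2}|\,dx_2\le c\,\ell(I_2)\avgint_{I_2}|\nabla g(x_2)|\,dx_2$ by \eqref{eq:Poincare-(1,1)} again. The final point is to control $|\nabla g(x_2)|=|\nabla_{x_2}\avgint_{I_1}f(y_1,x_2)\,dy_1|$: by differentiating under the integral sign (justified by the Lipschitz hypothesis) this is $\le \avgint_{I_1}|\nabla_2 f(y_1,x_2)|\,dy_1$, and averaging over $x_2\in I_2$ yields $\avgint_R|\nabla_2 f|$. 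Collecting the two contributions and using $|f(x_1,x_2)-f_R|\le |f(x_1,x_2)-\langle f(\cdot,x_2)\rangle_{I_1}| + |\langle f(\cdot,x_2)\rangle_{I_1}-f_R|$, integrated over $R$, gives \eqref{eq:PIsidelenght} with a dimensional constant $c=c(n_1,n_2)$.

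The step I expect to require the most care is the differentiation under the integral and the application of the convex-domain Poincar\'e inequality to the partially-averaged function $g$: one must check that $g$ is itself Lipschitz on $I_2$ with $\nabla g(x_2)=\avgint_{I_1}\nabla_2 f(y_1,x_2)\,dy_1$ (a.e.), which is where the Lipschitz regularity of $f$ is genuinely used, and that the one-dimensional-in-the-other-variable slices $x_1\mapsto f(x_1,x_2)$ are admissible in \eqref{eq:Poincare-(1,1)} for a.e. fixed $x_2$ (Fubini plus the fact that restrictions of Lipschitz functions are Lipschitz). Everything else is a routine application of Fubini's theorem and the triangle inequality; no eccentricity or weight enters at this level, since this is the purely geometric starting point from which the weighted Poincar\'e--Sobolev Theorem \ref{thm:AutomejorastrongccR} is later bootstrapped. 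An alternative, essentially equivalent, route would be to symmetrize by instead averaging first in $x_2$; either choice works and gives the same two-term bound.
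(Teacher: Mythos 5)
Your proof is correct, but it follows a genuinely different route from the paper, which gives \emph{two} proofs of this lemma, neither of which is yours. The paper's first proof integrates a pointwise representation formula of Lu--Wheeden (Proposition \ref{pro:LW}) involving an anisotropic Riesz-type kernel, and then bounds the kernel integral by $1$ after an affine change of variables. The paper's second proof splits the doubled oscillation $\avgint_R\avgint_R|f(x)-f(y)|$ via the intermediate point $(y_1,x_2)$, bounds each block by an anisotropic \emph{fractional} seminorm (Proposition \ref{pro:1-1FPIproductSpaces}, later sharpened in Theorem \ref{thm:BBMbiparametrico}), and then compares the fractional seminorm to the gradient via Lemma \ref{lem:oneparameterFractNabla}. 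Your slicing through the partial average $\langle f(\cdot,x_2)\rangle_{I_1}$ is the most elementary of the three: it needs only the classical one-parameter $(1,1)$-Poincar\'e inequality on cubes applied to the slices $x_1\mapsto f(x_1,x_2)$ and to $g(x_2)=\langle f(\cdot,x_2)\rangle_{I_1}$, plus the (correctly flagged) verification that $\nabla g(x_2)=\avgint_{I_1}\nabla_2 f(y_1,x_2)\,dy_1$ a.e., which for Lipschitz $f$ follows from bounded difference quotients, dominated convergence and Fubini (or by identifying the weak gradient of $g$). What your argument does \emph{not} buy is the fractional intermediate estimate: the paper deliberately routes through the fractional seminorms because Proposition \ref{pro:1-1FPIproductSpaces} and its refinement with the $(1-\delta)^{1/p}$ gain are needed later as the starting point for Theorem \ref{thm:AutomejorastrongBBM-A1-ccR}, so the lemma is obtained there as a byproduct of a stronger statement rather than as an end in itself.
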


This lemma is inspired by the work of Shi and Torchinsky \cite{ShiTor} but they do not consider the case $p=1$ since their method fails. 
Inequality \eqref{eq:PIsidelenght} will play a similar role as  \eqref{eq:Poincare-(1,1)}. 
However, there are two different points which makes the initial estimate \eqref{eq:PIsidelenght} different to the one we have been considering so far. First is that we use rectangles in the family $\ccR$ and second that the right hand side is formed by the sum of two functionals. 

We observe that if we enlarge \eqref{eq:PIsidelenght} replacing both sidelenghts,  $\ell(I_1)$ and  $\ell(I_2)$, by the larger quantity $d(R)$,  then the new functional is essentially the right hand side of \eqref{eq:Poincare-(1,1)} with $\Omega=R$ since $|\nabla_1 f| +|\nabla_2 f| \approx |\nabla f|$.

We will prove Lemma \ref{lem:(1,1)PI-ccR} in Section \ref{sec:Prelim-ccR} in two ways. The first proof will follow from Proposition \ref{pro:LW} as a consequence of a point-wise estimate obtained by Lu and Wheeden in \cite{LuWheeden}. Additionally, we will provide a second proof as a consequence of a ``fractional" Poincaré type inequality proven in Proposition \ref{pro:1-1FPIproductSpaces}. This proposition will be further improved in the following theorem which can be seen as a biparametric version of the results 
from \cite{BBM} and improved by M. Milman in \cite{M} 
(see also \cite{MS} and the recent interesting works \cite{BVY} \cite{DD} \cite{DM}).

\begin{theorem}\label{thm:BBMbiparametrico} 
Let $p_1,p_2\in [1,\infty)$ and let $\delta_1,\delta_2 \in(0,1).$  Then there exist dimensional constants $c_{n_1}, c_{n_2}>0$ such that  
for any  rectangle $R=I_1\times I_2 \in \ccR$
\begin{equation*}
\avgint_{R} |f - f_{R}|\leq c_{n_1} (1-\delta_1)^{\frac{1}{p_1}} a_1(R) 
+ \, c_{n_2} \, (1-\delta_2)^{\frac{1}{p_2}}a_2(R),
\end{equation*}
where 
\begin{equation*}
a_1(R)= \ell(I_1)^{\delta_1}\left(\avgint_{R}\int_{ I_1}  \frac{|f(x_1,x_2)- f(y_1,x_2)|^{p_1}}{|x_1-y_1|^{n_1+p_1\delta_1}}\, dy_1 dx_1\  dx_2\right)^{\frac1{p_1}}
\end{equation*}
and 
\begin{equation*}
a_2(R)=  \ell(I_2)^{\delta_2}\left(\avgint_{R}  \int_{I_2} \frac{|f(y_1, x_2) - f(y_1, y_2) |^{p_2}}{|x_2-y_2|^{n_2+p_2\delta_2}}\ dy_1 dx_2 dy_2.\right)^{\frac{1}{p_2}}
\end{equation*}
\end{theorem}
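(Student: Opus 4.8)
\textbf{Proof proposal for Theorem \ref{thm:BBMbiparametrico}.}

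The plan is to reduce the biparametric estimate to two successive applications of the one-dimensional (more precisely, one-block) Bourgain--Brezis--Mironescu--Milman inequality, handled one group of variables at a time. First I would write the oscillation $f-f_R$ as a telescoping sum along the two coordinate blocks. Fix $R=I_1\times I_2$ and, for $x=(x_1,x_2)\in R$, introduce the partial averages: let $g(x_2):=\avgint_{I_1} f(y_1,x_2)\,dy_1$ be the average of $f$ in the first block of variables. Then
\begin{equation*}
f(x_1,x_2)-f_R = \big(f(x_1,x_2)-g(x_2)\big) + \big(g(x_2)-f_R\big),
\end{equation*}
and note $f_R=\avgint_{I_2} g(x_2)\,dx_2$. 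Integrating $|f-f_R|$ over $R$ and using the triangle inequality splits the left-hand side into two pieces: the first, $\avgint_{R}|f(x_1,x_2)-g(x_2)|\,dx$, is for each fixed $x_2$ an oscillation of $f(\cdot,x_2)$ on the cube $I_1\subset\mathbb{R}^{n_1}$ around its own average; the second, $\avgint_{I_2}|g(x_2)-g_{I_2}|\,dx_2$, is an oscillation of $g$ on $I_2\subset\mathbb{R}^{n_2}$.

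For the first piece, I would apply the (unweighted, single-cube) sharpened fractional Poincar\'e inequality with the Milman-type gain --- that is, the estimate
\begin{equation*}
\avgint_{Q}|u-u_Q|\,dx \le c_n\,(1-\delta)^{1/p}\,\ell(Q)^{\delta}\Big(\avgint_Q\int_Q\frac{|u(x)-u(y)|^{p}}{|x-y|^{n+\delta p}}\,dy\,dx\Big)^{1/p},
\end{equation*}
which is exactly the one-parameter input discussed in the introduction (the self-improvement of \eqref{FPI-rough} via \cite{M}, combined with \cite{CantoPerez}, \cite{DIV}) --- to $u=f(\cdot,x_2)$ on $Q=I_1$ with exponent $p_1$ and smoothness $\delta_1$, and then integrate in $x_2$ over $I_2$. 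This produces precisely $c_{n_1}(1-\delta_1)^{1/p_1}a_1(R)$, since the resulting double integral over $I_1\times I_1$ integrated in $x_2$ is the definition of $a_1(R)$ (here one must be slightly careful: the one-parameter inequality gives an $L^{p_1}$ quantity in $x_2$, and one brings it down to $L^1$ in $x_2$ by Jensen/H\"older, which only helps). For the second piece, the function $g(x_2)=\avgint_{I_1}f(y_1,x_2)\,dy_1$ lives on $\mathbb{R}^{n_2}$; applying the same one-parameter BBM--Milman inequality on $Q=I_2$ with exponent $p_2$ and smoothness $\delta_2$ gives
\begin{equation*}
\avgint_{I_2}|g-g_{I_2}|\,dx_2 \le c_{n_2}(1-\delta_2)^{1/p_2}\,\ell(I_2)^{\delta_2}\Big(\avgint_{I_2}\int_{I_2}\frac{|g(x_2)-g(y_2)|^{p_2}}{|x_2-y_2|^{n_2+p_2\delta_2}}\,dy_2\,dx_2\Big)^{1/p_2}.
\end{equation*}
Now $g(x_2)-g(y_2)=\avgint_{I_1}\big(f(y_1,x_2)-f(y_1,y_2)\big)\,dy_1$, so by Jensen's inequality $|g(x_2)-g(y_2)|^{p_2}\le \avgint_{I_1}|f(y_1,x_2)-f(y_1,y_2)|^{p_2}\,dy_1$; substituting this in and using Fubini rearranges the triple integral into exactly the expression defining $a_2(R)$. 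Combining the two pieces yields the claimed inequality.

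The main obstacle I anticipate is bookkeeping rather than conceptual: one must verify that the normalizations (averaged versus non-averaged integrals, the powers $\ell(I_i)^{\delta_i}$, the exact form of the kernels $|x_i-y_i|^{-n_i-p_i\delta_i}$, and the asymmetric roles of $x_1,y_1,x_2,y_2$ in $a_1$ versus $a_2$) match up after Fubini and the two Jensen steps, and in particular that the one-parameter input is applied on cubes in the correct ambient dimension ($n_1$ for the first block, $n_2$ for the second) so the constants $c_{n_1},c_{n_2}$ come out dimension-correct. A secondary subtlety is justifying the use of the strong-norm one-parameter inequality with the $(1-\delta)^{1/p}$ gain in this pointwise-in-the-other-variable fashion (measurability of $x_2\mapsto \avgint_{I_1}\int_{I_1}\cdots$, finiteness a.e.), which for Lipschitz $f$ is routine. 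No weights appear here, so none of the $A_p$ machinery is needed for this particular statement; the self-improving method and weights enter only when this lemma is fed as a starting point into Theorem \ref{thm:AutomejorastrongccR} and Theorem \ref{thm:AutomejorastrongBBM-A1-ccR}.
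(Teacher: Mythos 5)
Your proposal is correct and follows essentially the same route as the paper: the paper also splits the oscillation into a first-block and a second-block piece (via the symmetrized double average $\avgint_R\avgint_R|f(x_1,x_2)-f(y_1,y_2)|$ and the intermediate point $(y_1,x_2)$, rather than your partial average $g(x_2)$, but these are interchangeable), and then applies the one-parameter Theorem \ref{thm:BBM} on $I_1$ and $I_2$ separately followed by Jensen's inequality to assemble $a_1(R)$ and $a_2(R)$. The only cosmetic difference is that for the second piece the paper applies Theorem \ref{thm:BBM} to $f(y_1,\cdot)$ for each fixed $y_1$ and then averages in $y_1$, whereas you apply it to the averaged function $g$ and undo the average with Jensen afterwards; both yield the same bound.
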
 

Once we have Theorem \ref{thm:BBMbiparametrico} proved, it is immediate to obtain Lemma \ref{lem:(1,1)PI-ccR} from the case $p_1=p_2=1$.

This result will be the starting point to derive a bi-parametric version of the following  (one-parameter) result which can be found in \cite{HMPV}.

\begin{theorem}\label{thm: A1PSFractBBM} (\cite{HMPV})
Let $\delta \in (0,1)$ and  $w \in A_1 $. Also let $p_{\delta,w}^*$ be the weighted fractional Sobolev exponent defined by
\begin{equation}\label{DegSobExpCubes}
\frac{1}{p} -\frac{1}{ p_{\delta,w}^* }=   \frac{\delta}{n} \, \frac{1}{1+\log [w]_{A_1}}
\end{equation}
There is a constant $C_{n,p}>0$ such that for every cube $Q$ in $\mathbb{R}^n$, 
\begin{equation*}
\left(  \avgint_{ Q }   |u -u_{Q}|^{p_{\delta,w}^*}     \,wdx\right)^{\frac{1}{p_{\delta,w}^*}}  
\leq C_{n,p}\, (1-\delta)^{\frac1p} \,  [w]^{\frac{1}{p}}_{A_1}a(Q),
\end{equation*}
where
\begin{equation*}
a(Q)= \ell(Q)^{\delta} \,
\left(  \avgint_{Q}  \int_{Q}  \frac{|u(x)- u(y)|^p}{|x-y|^{n+p\delta}}\,dy\,wdx\right)^{\frac1p}.   
\end{equation*}
\end{theorem}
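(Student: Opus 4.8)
The idea is to feed the self-improving mechanism --- here in the cubic setting, as in Theorems~\ref{thm:AutomejorastrongcR}--\ref{thm:AutomejoraweakcR} and their sharp cubic forerunners from \cite{PR-Poincare,CantoPerez,FPW98} --- a fractional $(1,1)$-Poincar\'e inequality that already carries the Bourgain--Brezis--Mironescu factor $(1-\delta)^{1/p}$, and then to pass from the resulting weak estimate to the strong one by truncation.

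\emph{Starting point.} Milman's improvement of the rough inequality \eqref{FPI-rough} (the displayed estimate immediately preceding Theorem~\ref{thm:FracSobGain}, \cite{M}) gives, for every cube $Q$,
\[
\avgint_Q|u-u_Q|\,dx\le c_n\,(1-\delta)^{\frac1p}\,[u]_{W^{\delta,p}(Q)} .
\]
Since $w\in A_1$ one has $w(x)\ge [w]_{A_1}^{-1}\,\avgint_Q w$ for a.e.\ $x\in Q$, hence $\avgint_Q F\,dx\le[w]_{A_1}\,\frac1{w(Q)}\int_Q F\,w\,dx$ for any $F\ge 0$. Taking $F(x)=\int_Q|u(x)-u(y)|^p|x-y|^{-n-p\delta}\,dy$ converts $[u]_{W^{\delta,p}(Q)}$ into $[w]_{A_1}^{1/p}a(Q)$, so that
\[
\avgint_Q|u-u_Q|\,dx\le c_n\,(1-\delta)^{\frac1p}\,[w]_{A_1}^{\frac1p}\,a(Q),\qquad Q\in\cQ .
\]

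\emph{The discrete condition that fixes the exponent.} Write $a(Q)^p=\ell(Q)^{p\delta}\,\nu(Q)/w(Q)$, where $\nu(Q):=\int_Q\int_Q|u(x)-u(y)|^p|x-y|^{-n-p\delta}\,dy\,w(x)\,dx$ is monotone and, by positivity of the integrand, subadditive over disjoint subcubes ($\sum_i\nu(Q_i)\le\nu(Q)$ for $\{Q_i\}\subset Q$ disjoint). Estimating $\sum_i a(Q_i)^{r}\,w(Q_i)/w(Q)$ via this, the identity $\ell(Q_i)^{p\delta}=|Q_i|^{p\delta/n}$ and the $A_1$ comparisons $\essinf_{Q_i}w\ge[w]_{A_1}^{-1}w(Q)/|Q|$ and $w(Q_i)\le w(Q)$, a direct computation shows that the powers of $|Q_i|$ cancel exactly for $r=p^*_{\delta,w}$ defined by \eqref{DegSobExpCubes}, and that with this choice
\[
\Big(\sum_i a(Q_i)^{p^*_{\delta,w}}\,\tfrac{w(Q_i)}{w(Q)}\Big)^{1/p^*_{\delta,w}}\le [w]_{A_1}^{\,\delta/(n(1+\log[w]_{A_1}))}\,a(Q)\le e\,a(Q) ,
\]
i.e.\ $a\in D_{p^*_{\delta,w},\cQ}(w)$ with an absolute constant. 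This is precisely the mechanism by which the weight pushes the attainable exponent below the unweighted $p^*_\delta$.

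\emph{Self-improvement and truncation.} With the $L^1$ estimate above as initial hypothesis and $a\in D_{p^*_{\delta,w},\cQ}(w)$ as input, the cubic weak-type self-improving theorem --- the analogue for $\cQ$ of Theorem~\ref{thm:AutomejoraweakcR}, in the sharp form with linear dependence on the weight constant of \cite{CantoPerez} --- produces
\[
\big\|u-u_Q\big\|_{L^{p^*_{\delta,w},\infty}\big(Q,\frac{w\,dx}{w(Q)}\big)}\le C_{n,p}\,(1-\delta)^{\frac1p}\,[w]_{A_1}^{\frac1p}\,a(Q) ,
\]
the factor $(1-\delta)^{1/p}$ carried through linearly. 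Finally, $a$ has the truncation property with exponent $p$: for the truncations $u^{\lambda_{k+1}}_{\lambda_k}$ of $u$ at a geometric sequence of levels one has $\sum_k|u^{\lambda_{k+1}}_{\lambda_k}(x)-u^{\lambda_{k+1}}_{\lambda_k}(y)|^p\le|u(x)-u(y)|^p$ (the increments telescope and $\ell^p\le\ell^1$), whence $\sum_k a(Q;u^{\lambda_{k+1}}_{\lambda_k})^p\le a(Q;u)^p$. Running the truncation argument of \cite{MacManus-Perez-98,DIV} then upgrades the weak estimate to the strong one (in fact to the Lorentz norm $L^{p^*_{\delta,w},p}$), which is the assertion.

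\emph{Main obstacle.} The delicate point is the bookkeeping in the second step: recognising $p^*_{\delta,w}$ as exactly the exponent for which $\|a\|_{D_{p^*_{\delta,w},\cQ}(w)}$ remains bounded, and keeping this bound uniform in $\delta$ and in how close $[w]_{A_1}$ is to $1$. A crude use of the earlier self-improving results (e.g.\ Theorem~\ref{thm:PoSo-Aq-diam-delta} with $q=1$) would reintroduce either an exponential dependence on $\log[w]_{A_1}$ or the logarithmic blow-up $B_{w,1}$ for flat weights; eliminating both, so as to land on the clean constant $C_{n,p}(1-\delta)^{1/p}[w]_{A_1}^{1/p}$, is what makes the statement non-routine. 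A secondary technical point is to check that the weak-to-strong passage via truncation does not reintroduce an unwanted factor of $p^*_{\delta,w}$ or $[w]_{A_\infty}$.
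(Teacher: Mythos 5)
A preliminary remark: the paper does not prove this theorem itself --- it is quoted from \cite{HMPV} --- so the natural benchmark is the proof of its bi-parametric analogue, Theorem \ref{thm:AutomejorastrongBBM-A1-ccR}. Your architecture (a Milman/BBM starting point already carrying $(1-\delta)^{1/p}$, insertion of the $A_1$ weight, a discrete summability condition calibrated so that the powers of $|Q_i|$ cancel exactly at the exponent $p^*_{\delta,w}$ of \eqref{DegSobExpCubes}, then self-improvement) is the same as the paper's. Your starting inequality is correct, and so is the computation giving $\|a\|_{D_{p^*_{\delta,w}}(w)}\le [w]_{A_1}^{\delta/(n(1+\log [w]_{A_1}))}\le e$: it is Lemma \ref{lem:p^*} with $q=1$ and $M=1+\log[w]_{A_1}$, which in fact yields the stronger $SD^s$ condition with $s=nM'/\delta$ and $\|a\|_{SD^s}^{s}\le e$.

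The gap is in the self-improvement step, and it is precisely the point you defer as ``secondary''. The weak-type theorem you invoke (the cubic analogue of Theorem \ref{thm:AutomejoraweakcR}, i.e.\ \cite{CantoPerez}) gives $\|u-u_Q\|_{L^{p^*,\infty}}\le c\,p^*\,[w]_{A_\infty}\,\|a\|_{D_{p^*}(w)}\,a(Q)$; the factor $[w]_{A_\infty}$ is intrinsic to that route (it governs the decay rate in the good-$\lambda$ estimate of Corollary \ref{COROLARIOcR}) and for $A_1$ weights it can be comparable to $[w]_{A_1}$. What your argument actually yields is therefore a constant of order $[w]_{A_1}^{1+\frac1p}$, not $[w]_{A_1}^{\frac1p}$, and truncation cannot remove a factor already present at the weak level. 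The repair is the case split carried out in the proofs of Theorems \ref{thm:AutomejorastrongccR} and \ref{thm:AutomejorastrongBBM-A1-ccR}. If $[w]_{A_1}\ge e$, one must use the \emph{strong} self-improving Theorem \ref{thm:AutomejorastrongcR} with the full $SD^s$ condition: its constant $c_n(1+s)\max\{\|a\|^s,1\}$ contains no $[w]_{A_\infty}$, and since $M\ge 2$ forces $M'\le 2$ it is $\lesssim_n \delta^{-1}$ (a factor also present in the paper's route and immaterial in the regime $\delta\to 1$ where the gain $(1-\delta)^{1/p}$ matters), so the only weight dependence is the $[w]_{A_1}^{1/p}$ already sitting in the starting functional. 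If $[w]_{A_1}<e$ (flat weights, where $M'$ blows up and the strong route degenerates), your weak-plus-truncation route is the right one, precisely because there $[w]_{A_\infty}\le[w]_{A_1}<e$ is harmless; the truncation property $\sum_k a(Q;T_ku)^p\le a(Q;u)^p$ from \cite{DIV} that you quote is correct and upgrades the weak bound to the strong (indeed Lorentz $L^{p^*_{\delta,w},p}$) bound.
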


The bi-parametric counterpart of this result is the following.

\begin{theorem}\label{thm:AutomejorastrongBBM-A1-ccR}
Let $\delta \in (0,1)$ and  $w \in A_{1,\ccR} $. Also, let  
\begin{equation}\label{DegSobExp-ccR}
\frac{1}{p} -\frac{1}{ p_{\delta,w}^* }=   \frac{\delta}{n} \, \frac{1}{1+\log [w]_{A_{1,\ccR}} }
\end{equation}
Then there exists a  constant $c$ such that for each 
$R=I_1\times I_2 \in \ccR$ 
\begin{equation*}
\left(  \avgint_{ R }   |f -f_{R}|^{p_{\delta,w}^*}     \,wdx\right)^{\frac{1}{p_{\delta,w}^*}} \leq 
c\,[w]_{A_{1,\ccR}}^{\frac1p}\, (1-\delta)^{\frac1p}\Big(a_1(R)+a_2(R)\Big)
\end{equation*}
where
\begin{equation*}
a_1(R)=  
\left( \frac{\ell(I_1)^{p\delta}  }{w(R)} \int_{R} \int_{ I_1}  \frac{|f(x_1,x_2)- f(y_1,x_2)|^p}{|x_1-y_1|^{n_1+p\delta}}\, w(x_1,x_2) dx_1  dx_2\ dy_1\right)^{\frac1p}
\end{equation*}
and
\begin{equation*}
a_2(R)=  
\left( \frac{\ell(I_2)^{p\delta}  }{w(R)} \int_{R} \int_{ I_2}  \frac{|f(y_1,x_2)- f(y_1,y_2)|^p}{|x_2-y_2|^{n_2+p\delta}}\, w(y_1,y_2)\,  dy_1dy_2\ dx_2  \right)^{\frac1p}.
\end{equation*}

\end{theorem}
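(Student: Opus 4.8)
The plan is to combine the biparametric Bourgain--Brezis--Mironescu inequality (Theorem \ref{thm:BBMbiparametrico}) as a \emph{starting point} with the self-improving machinery developed for the basis $\ccR$, in exactly the same spirit as Theorem \ref{thm:AutomejorastrongBBM-A1-ccR}'s one-parameter ancestor Theorem \ref{thm: A1PSFractBBM} is obtained in \cite{HMPV}. The key point is that the self-improving method for $\ccR$ (the analogue of Theorem \ref{thm:AutomejorastrongcR}/Theorem \ref{thm:PoSo-Aq-diam-delta} for the class $\ccR$) does not care whether the right-hand side of the starting Poincar\'e estimate is a single functional $a(R)$ or a sum $a_1(R)+a_2(R)$, provided each piece satisfies the appropriate $SD_{p,\ccR}^s(w)$ smallness-preservation condition. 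So the proof splits into three tasks: (i) produce a \emph{weighted} $L^1$ starting estimate of the form $\avgint_R|f-f_R|\le a_1(R)+a_2(R)$ with the $(1-\delta)^{1/p}$ gain already built in and with the $a_i$'s being the weighted fractional seminorms appearing in the statement; (ii) verify that each $a_i$ lies in $SD_{p,\ccR}^s(w)$ for the right $s$, with $SD$-constant controlled by $[w]_{A_{1,\ccR}}$; (iii) feed this into the self-improving theorem for $\ccR$ and read off the exponent $p_{\delta,w}^*$ and the constant.

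For step (i): first apply Theorem \ref{thm:BBMbiparametrico} with $p_1=p_2=p$ and $\delta_1=\delta_2=\delta$ to get the \emph{unweighted} inequality $\avgint_R|f-f_R|\le c_{n_1}(1-\delta)^{1/p}\tilde a_1(R)+c_{n_2}(1-\delta)^{1/p}\tilde a_2(R)$, where the $\tilde a_i$ carry Lebesgue averages $\avgint_R$ rather than weighted ones. To pass from the Lebesgue-normalized seminorm to the $w$-normalized seminorm $a_i(R)$ one uses that $w\in A_{1,\ccR}$: for the double integral defining $\tilde a_1(R)$ (which is of the form $\frac1{|R|}\int_R g(x_1,x_2)\,dx_1dx_2$ with $g\ge0$ depending also on $y_1$), H\"older's inequality together with the $A_{1,\ccR}$ condition $\frac1{|R|}\int_R w\le [w]_{A_{1,\ccR}}\,\operatorname*{ess\,inf}_R w$ gives
\begin{equation*}
\Big(\frac{1}{|R|}\int_R g\Big)^{1/p}\le [w]_{A_{1,\ccR}}^{1/p}\Big(\frac{1}{w(R)}\int_R g\,w\Big)^{1/p},
\end{equation*}
which upgrades $\tilde a_i(R)$ to exactly the weighted $a_i(R)$ of the statement at the price of the factor $[w]_{A_{1,\ccR}}^{1/p}$. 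Here one must be a little careful that the ``extra'' variable $y_1$ (resp.\ $y_2$) over which one integrates in $a_1$ (resp.\ $a_2$) does not interfere: one applies the weight comparison slicewise/after Fubini, in the variables over which $w$ actually appears, exactly as is done in \cite{HMPV} for cubes. This yields the weighted $(1,1)$ starting point $\avgint_R|f-f_R|\le c_n(1-\delta)^{1/p}[w]_{A_{1,\ccR}}^{1/p}\big(a_1(R)+a_2(R)\big)$.

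For step (ii), the smallness-preservation estimate: fix a family of disjoint dyadic subrectangles $\{R_i\}$ of $R$. For $a_1$, after raising to the $p$-th power one is summing quantities of the form $\ell(I_1^{(i)})^{p\delta}\frac{1}{w(R)}\int\int_{I_1^{(i)}}\frac{|f(x_1,x_2)-f(y_1,x_2)|^p}{|x_1-y_1|^{n_1+p\delta}}w\,dx_1dx_2dy_1$; because the $R_i$ are disjoint dyadic descendants, the domains $R_i\times I_1^{(i)}$ (in the appropriate product of variables) are essentially disjoint subsets of $R\times I_1$, so the sum of the integrals is bounded by the single integral over $R$, i.e.\ by $w(R)\,a_1(R)^p\ell(I_1)^{-p\delta}$ times the relevant $\ell(I_1^{(i)})^{p\delta}$ factors. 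Using $\ell(I_1^{(i)})^{p\delta}\le\ell(I_1)^{p\delta}$ and, crucially, Lemma \ref{lem:eccentr} (all $R_i$ at a common dyadic level have the same eccentricity as $R$, so $\ell(I_1^{(i)})$ and $|R_i|^{1/n}$ are comparable) one extracts a factor $(|\bigcup_i R_i|/|R|)^{\delta/n}$, giving $a_1\in SD_{p,\ccR}^{s}(w)$ with $s=n/\delta$ and $SD$-constant $\lesssim 1$. The same argument applies verbatim to $a_2$. (This is the one place where the $A_{1,\ccR}$ hypothesis is again convenient: to go from the integral bound to the normalized-measure bound one compares $w$ on $R_i$ with $w$ on $R$, costing at most another bounded factor, and the $\log[w]_{A_{1,\ccR}}$ will be absorbed into the exponent $p_{\delta,w}^*$ rather than into the constant, exactly as in \eqref{DegSobExp-ccR}.)

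Finally, step (iii): apply the self-improving theorem for the basis $\ccR$ (the $\ccR$-version of Theorems \ref{thm:AutomejorastrongcR}/\ref{thm:PoSo-Aq-diam-delta}, with $q=1$ since $w\in A_{1,\ccR}\subset A_{\infty,\ccR}$) to the functional $a=a_1+a_2$ with $s=n/\delta$. Since $a_1,a_2\in SD_{p,\ccR}^{n/\delta}(w)$ and the sum of two $SD$-functionals is again an $SD$-functional (the $\ell^p$ triangle inequality on the defining sums), the theorem outputs
\begin{equation*}
\Big(\frac{1}{w(R)}\int_R|f-f_R|^{p_{\delta,w}^*}w\Big)^{1/p_{\delta,w}^*}\le C_n\,\frac1\delta\,a(R),
\end{equation*}
with $p_{\delta,w}^*$ governed precisely by $\frac1p-\frac1{p_{\delta,w}^*}=\frac{\delta}{n}\frac{1}{1+\log[w]_{A_{1,\ccR}}}$ as in \eqref{DegSobExp-ccR}; combining with the starting estimate from step (i) and absorbing the harmless $\frac1\delta$ and $c_n$ into the constant $c$ produces the claimed inequality with the factor $[w]_{A_{1,\ccR}}^{1/p}(1-\delta)^{1/p}$ in front. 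For the strong (rather than weak) norm on the left one invokes the truncation argument, which is available because each fractional seminorm functional $a_i$ is of ``truncation type'' in the sense of \cite{DIV} (the relevant property is inherited slicewise from the one-parameter case).

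\textbf{Main obstacle.} The delicate part is step (ii) together with the bookkeeping in step (i): making rigorous the claim that the double- (indeed triple-) integral fractional functionals $a_1,a_2$ satisfy the dyadic smallness-preservation condition $SD_{p,\ccR}^{n/\delta}(w)$ \emph{with a weight present}, since one must simultaneously (a) exploit disjointness of the $R_i$ in the ``mixed'' product domains $R_i\times I_1^{(i)}$, (b) use the common-eccentricity Lemma \ref{lem:eccentr} to convert powers of the sidelength $\ell(I_1^{(i)})$ into powers of $|R_i|$, and (c) control the passage between $|\cdot|$-normalization and $w(\cdot)$-normalization on each $R_i$ using $A_{1,\ccR}$ without losing the $(1-\delta)^{1/p}$ gain or picking up a $\log[w]$ in the \emph{constant}. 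This is where essentially all the real work lies; everything else is a faithful transcription of the one-parameter argument of \cite{HMPV} into the product setting, using the $\ccR$-self-improving theorem as a black box.
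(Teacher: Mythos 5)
Your overall strategy is exactly the paper's: start from Theorem \ref{thm:BBMbiparametrico} with $p_1=p_2=p$, upgrade the Lebesgue-normalized fractional seminorms to the $w$-normalized ones via the pointwise $A_{1,\ccR}$ bound $\tfrac{1}{|R|}\le [w]_{A_{1,\ccR}}\tfrac{\inf_R w}{w(R)}$ (no H\"older is actually needed here), observe that each $a_j$ is of the sidelength form \eqref{eq:general-sidelength-a(R)} with an increasing integrand, verify a smallness-preservation condition using the $\ccR$-adapted eccentricity quantity, sum the two functionals, and feed the result into the $\ccR$-version of the self-improving machinery, with truncation available for flat weights. This is precisely how the paper proceeds (it quotes Lemma \ref{lem:Dp-likeConditions-ccR} and then finishes as in Theorem \ref{thm:AutomejorastrongccR}).

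There is, however, one genuine mismatch in your write-up between steps (ii) and (iii). In step (ii) you establish $a_j\in SD_{p,\ccR}^{n/\delta}(w)$ with constant $\lesssim 1$; this is part (1) of Lemma \ref{lem:Dp-likeConditions-ccR} and, fed into the $\ccR$-version of Theorem \ref{thm:AutomejorastrongcR}, it only yields the $(p,p)$ inequality, because that theorem improves the oscillation exactly to the exponent at which the $SD$ condition holds. To reach $p_{\delta,w}^*$ you must verify the $SD$ condition \emph{at the target exponent}: for $M>1$ and $\tfrac1p-\tfrac1{p_M^*}=\tfrac{\delta}{nM}$ one needs $a_j\in SD^{nM'/\delta}_{p_M^*,\ccR}(w)$ with $\|a_j\|\le [w]_{A_{1,\ccR}}^{\delta/(nM)}$ (part (3) of Lemma \ref{lem:Dp-likeConditions-ccR}, i.e.\ the $\ccR$-analogue of Lemma \ref{lem:p^*}, proved with the invariant $E(R)=\ell(I_2)^{n_2}/\ell(I_1)^{n_2}$ in place of $e(R)$), and then choose $M=1+\log[w]_{A_{1,\ccR}}$ so that $p_M^*=p_{\delta,w}^*$ and $s\|a\|^s$ stays bounded. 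You cannot instead invoke Theorem \ref{thm:PoSo-Aq-diam-delta} as a black box, since its hypothesis is the model form $d(R)^{\delta}(\mu(R)/w(R))^{1/p}$, which your functionals (sidelength in place of diameter, increasing set function in place of a measure) do not literally satisfy. Your parenthetical remark that ``the $\log[w]_{A_{1,\ccR}}$ will be absorbed into the exponent'' shows you know the mechanism, but the condition you actually verify does not support the conclusion you draw; replacing your step (ii) by the part-(3) computation closes the gap and the rest of the argument goes through as you describe.
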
 

This result is completely new and the proof  can be found in Section \ref{sec:Bi-parameterFractionalBBMPSI}.
An interesting question here is to extend this result beyond $A_{1,\ccR}$, for instance to the $A_{p,\ccR}$ case which is not known even in the one parameter situation, Theorem \ref{thm:AutomejorastrongBBM-A1-ccR}.%

\subsection{Outline}
The paper  is organized as follows. In Section \ref{sec:prelim} we will include some preliminary results needed for our proofs. In particular, we will show the geometric summability conditions satisfied by different functionals $a(R)$. In Section \ref{sec:proofscR} we will provide the proofs of the main results in the case of multiparameter rectangles in the family $\cR$. 
In Section \ref{sec:Prelim-ccR} we obtain the crucial starting $(1,1)$-Poincar\'e inequalities adapted to the basis $\ccR$.  In Section \ref{sec:Bi-parameterPSI}  we use this starting point as input for our self-improving method to obtain Poincar\'e-Sobolev type inequalities for the basis $\ccR$ of rectangles given by product of cubes. Following the same line of ideas, in Section \ref{sec:Bi-parameterFractionalBBMPSI} we obtain fractional Sobolev-Poincar\'e inequality for the biparameter case as presented in Theorem \ref{thm:AutomejorastrongBBM-A1-ccR} even with the extra gain $(1-\delta)^{\frac1p}$.  
Finally, in the Appendix \ref{sec:App-Truncation} we provide the proofs of the so called truncation method and in Appendix \ref{sec:App:oneparameterFractNabla} we give the proof of Lemma \ref{lem:oneparameterFractNabla}.

\subsection{Acknowledgement}  The last author is very grateful to Professors Oscar Domínguez and Mario Milman for enlighteling conversations about the results concerning fractional (uniparametric) Poincaré inequalities with extra gain like in Theorem  \ref{thm:BBM}. 
In particular  we ackonwledge the very interesting work \cite{M}.

\section{Preliminaries}\label{sec:prelim} 
We include here some well known preliminaries and some needed lemmas and further properties.
Throughout this section, the ambient space will be $\mathbb{R}^n$ and $R$ will denote an arbitrary rectangle in $\cR$, the 
family of rectangles defined as $n$-fold product of intervals on $\mathbb{R}$.

\subsection{Smallness preserving functionals}

 We start with some model examples of functionals mentioned in the statements of our results. As we will see, the constant eccentricity is crucial.

\begin{example} \label{ex:modelexample} \, Consider the functional defined by 
\begin{equation}\label{eq:Model-a(R)}
a(R)=d(R)^{\delta}\left(\frac{1}{w(R)}\int_R A(R,x)\,dx \right)^{1/p}
\end{equation}
where $A(R,x)$ is nonnegative and increasing in $R$. That is, $R_1 \subset R_2$ implies that  $A(R_1,x)\leq A(R_2,x)$.    We will verify that this functional satisfies Definition  \ref{def:smallness}, more precisely we will check that
$a\in SD^{n/\delta}_{p,\cR}(w)$ for a weight $w$  in $\mathbb{R}^n$. More importantly, that $\|a\|_{SD^{n/\delta}_{p,\cR}(w)}=1$ . Consider $q>1$ such that $\frac{\delta p q}{n}>1$. Then by the key eccentricity property in Lemma \ref{lem:eccentr}  
\begin{eqnarray*}
\sum_i a(R_i)^pw(R_i)&=&\sum_i d(R_i)^{\delta p} \int_{R_i}A(R_i,x)\,dx\\
&=&\frac{1}{e(R)^{\delta p}}\sum_i |R_i|^{p\delta/n}\int_{R_i}A(R_i,x)\,dx \\
& \leq & \frac{1}{e(R)^{\delta p}} \left (\sum_i  |R_i|^{\frac{\delta p q}{n}}\right )^{1/q}\left( \sum_i \left[ \int_{R_i} A(R_i,x)\,dx\right]^{q'} \right)^\frac{1}{q'} \\
&\leq& \frac{1}{e(R)^{\delta p}}\left( \sum_i |R_i|\right)^{\frac{\delta p}{n}}\int_R A(R,x)\,dx.
\end{eqnarray*}
Now, the eccentricity definition gives the desired bound,
\begin{eqnarray*}
\sum_i a(R_i)^p w(R_i)  &\leq& \frac{|R|^{\frac{\delta p}{n}}}{e(R)^{\delta p}} 
\left( \frac{|\bigcup_iR_i|}{|R|}  \right)^{\frac{\delta p}{n}}  \,\int_R A(R,x)\,dx\\
& =& d(R)^{\delta p}  \,  \left( \frac{|\bigcup_iR_i|}{|R|}  \right)^{\frac{\delta p}{n}} \int_R A(R,x)\,dx\\
& = &   \left( \frac{|\bigcup_iR_i|}{|R|}  \right)^{\frac{\delta p}{n}}\, a(R)^pw(R). 
\end{eqnarray*}

\end{example}

\begin{example}\label{ex:mainexample}
The example above belongs to a rather wide family of functionals given by averages of general measures:  $a(R)=d(R)^\delta\left( \frac{\mu(R)}{w(R)}\right)^{1/p}$, $\delta>0$. The previous computation shows that $a \in SD_{p,\cR}^{n/\delta}(w)$ for any weight $w$ in $\mathbb{R}^n.$ 
\end{example}

The above estimates are sufficient to obtain $(p,p)$-Poincar\'e inequalities since we have that $\|a\|_{SD^{n/\delta}_{p,\cR}(w)} \leq 1$. In order to reach higher exponents of integrability on the left hand side and get a Poincar\'e-Sobolev type inequality, we need to pay a price allowing $\|a\|_{SD_{p,\cR}^s(w)}$ to be larger than 1. A sort of natural way to do this is to consider the involved weight to be in a ``better" $A_{q,\cR}$ class. We follow the line of ideas from \cite{PR-Poincare} adapted to our setting of rectangles.

\begin{lemma}\label{lem:p^*}
Consider two indices $p,q$ such that $1\le q\le p < n$ and let $w \in A_{q,\cR}$ in $\mathbb{R}^n$.  For $\delta>0$, consider the functional 
$$
a(R)=d(R)^\delta\left(\frac{1}{w(R)}\mu(R) \right)^{1/p} \qquad R \in \cR.
$$
For $M>1$ we define $p_M^*:=p(n,q,M)$ by the condition
\begin{equation*}\label{eq:ptimesM}
\frac{1}{p} -\frac{1}{ p_M^*}=\frac{\delta}{nqM}.
\end{equation*}
Then $a$ satisfies Definition \ref{def:smallness} with parameters $p=p^*_{M}$ and $s=\frac{nM'}{\delta}$, namely if $R \in \cR$ and  $\{R_i\}_i$ is any family of pairwise disjoint dyadic subrectangles of $R$ the following inequality holds:
\begin{equation}\label{eq:Keyestimate}
\left( \sum_{i}a(R_i)^{ p^*_{M} }\,\frac{w(R_i)}{w(R)}\right)^{\frac{1}{p^*_{M}}}  \leq  [w]_{A_{q,\cR}}^{ \frac{ \delta}{nqM} }
\,\left (   \frac{|\bigcup_iR_i|}{|R|}      \right )^{ \frac{\delta}{nM'}  } a(R). 
\end{equation}
\end{lemma}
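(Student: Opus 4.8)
The plan is to estimate the weighted sum $\sum_i a(R_i)^{p^*_M}\,w(R_i)$ directly, following the pattern of Example \ref{ex:modelexample} but now exploiting the $A_{q,\cR}$ condition to trade a small power of the measure ratio against a controlled power of $[w]_{A_{q,\cR}}$. First I would unwind the definition: writing $\tau:=p^*_M$, we have
\begin{equation*}
\sum_i a(R_i)^{\tau}\,w(R_i)=\sum_i d(R_i)^{\delta\tau}\,\mu(R_i)^{\tau/p}\,w(R_i)^{1-\tau/p},
\end{equation*}
and then use the constant eccentricity property (Lemma \ref{lem:eccentr}), which gives $d(R_i)^{\delta\tau}=|R_i|^{\delta\tau/n}e(R)^{-\delta\tau}$ for every child $R_i$, so the geometric factor is uniform across the family. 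The key exponent bookkeeping comes from the defining relation $\frac1p-\frac1\tau=\frac{\delta}{nqM}$, which rearranges to $\frac{\tau}{p}=1+\frac{\delta\tau}{nqM}$, hence $1-\frac{\tau}{p}=-\frac{\delta\tau}{nqM}$. Thus the weight appears with the negative power $w(R_i)^{-\delta\tau/(nqM)}$, which is exactly what lets the $A_{q,\cR}$ (equivalently, via Hölder, a lower bound for $w(R_i)/w(R)$ in terms of $|R_i|/|R|$) estimate enter.

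Next I would apply Hölder's inequality in $i$ with a suitable pair of exponents to split off the purely geometric part $\sum_i |R_i|^{\delta\tau/n}$ from the part carrying $\mu$ and $w$. Concretely, choosing the Hölder exponents so that the $\mu$-part is raised to the first power and can be summed by disjointness to $\mu(R)^{\tau/p}$ (up to a possibly necessary monotonicity or direct disjointness argument), and the geometric part is raised to a power $q$ with conjugate $q'$, one arrives at a bound of the shape
\begin{equation*}
\sum_i a(R_i)^{\tau}w(R_i)\le e(R)^{-\delta\tau}\Big(\sum_i|R_i|^{\beta}\Big)^{1/q}\,\mu(R)^{\tau/p}\,\Big(\text{factor involving }w\Big),
\end{equation*}
where $\beta$ is chosen so that $\delta\tau q/n=\beta$ and simultaneously the $w$-factor collapses. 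The role of the $A_{q,\cR}$ hypothesis is the standard fact that for $w\in A_{q,\cR}$ and any subrectangle $R_i\subset R$,
\begin{equation*}
\frac{|R_i|}{|R|}\le [w]_{A_{q,\cR}}^{1/q}\left(\frac{w(R_i)}{w(R)}\right)^{1/q},
\end{equation*}
which I would invoke to convert the residual powers of $|R_i|/|R|$ into the allowed $[w]_{A_{q,\cR}}^{\delta/(nqM)}$ prefactor and a power of $|\bigcup_i R_i|/|R|$ with the subunital exponent $\delta/(nM')$ (using $\sum_i|R_i|=|\bigcup_i R_i|\le|R|$ and subadditivity of $t\mapsto t^{\theta}$ for $\theta\le 1$ where needed). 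Finally, re-expressing $e(R)^{-\delta\tau}|R|^{\delta\tau/n}=d(R)^{\delta\tau}$ and $d(R)^{\delta\tau}\mu(R)^{\tau/p}w(R)^{1-\tau/p}=a(R)^{\tau}w(R)$ (again via the exponent relation), dividing by $w(R)$ and taking $\tau$-th roots yields \eqref{eq:Keyestimate}.

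The main obstacle I anticipate is getting the exponent arithmetic in the Hölder step to close exactly — i.e., choosing the conjugate pair $(q,q')$ so that one factor sums to $\mu(R)$ (first power, using disjointness), the geometric factor produces precisely the power $\delta/(nM')$ of $|\bigcup_i R_i|/|R|$, and the power of $[w]_{A_{q,\cR}}$ that falls out is exactly $\delta/(nqM)$ and no larger. One has to verify that $M'=M/(M-1)$ interacts correctly with the defining relation for $p^*_M$; the condition $q\le p<n$ and $\delta>0$, $M>1$ should guarantee all intermediate exponents are admissible (in particular that $\delta\tau q/n\ge 1$ so the monotone/subadditivity steps are valid), but this needs to be checked carefully rather than asserted. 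A secondary technical point is whether $\mu$ being merely a Radon measure (not of the form $A(R,x)\,dx$ with $A$ increasing, as in Example \ref{ex:modelexample}) forces a slightly different summation of the $\mu$-part; here plain disjointness $\sum_i\mu(R_i)\le\mu(R)$ should suffice precisely because after Hölder the $\mu$-factor is raised to the first power, which is the reason the exponent $\tau$ was calibrated as it was.
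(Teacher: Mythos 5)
Your skeleton is the paper's own: unwind $a(R_i)^{p^*_M}w(R_i)$, use constant eccentricity to replace $d(R_i)$ by $|R_i|^{1/n}/e(R)$, observe that the defining relation for $p^*_M$ turns the weight into the negative power $w(R_i)^{-\delta p^*_M/(nqM)}$, kill that negative power with the $A_{q,\cR}$ geometric estimate, and finish with H\"older plus disjointness of the $R_i$. So the route is right. But the step you defer --- ``getting the exponent arithmetic in the H\"older step to close exactly'' --- is precisely where the content of the lemma lives, and your sketch as written does not yet produce the stated exponents. The device you are missing is the $M$--$M'$ split of the $A_{q,\cR}$ estimate: one writes
\begin{equation*}
\frac{|R_i|}{w(R_i)^{1/(qM)}}=\left(\frac{|R_i|}{w(R_i)^{1/q}}\right)^{1/M}|R_i|^{1/M'}\le [w]_{A_{q,\cR}}^{1/(qM)}\left(\frac{|R|}{w(R)^{1/q}}\right)^{1/M}|R_i|^{1/M'},
\end{equation*}
raises this to the power $\delta p^*_M/n$, and only then sums in $i$. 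The $1/M$ part of the factorization is exactly what produces the prefactor $[w]_{A_{q,\cR}}^{\delta/(nqM)}$, and the leftover $|R_i|^{1/M'}$ is what produces the smallness exponent $\delta/(nM')$; without this split one cannot see why $M'$ (rather than, say, $1$) appears in $s=nM'/\delta$. Note also that your displayed $A_q$ inequality is used in the opposite direction from what you describe: it does not ``convert residual powers of $|R_i|/|R|$ into a power of $|\bigcup_iR_i|/|R|$''; it cancels the negative power of $w(R_i)$ against part of the positive power of $|R_i|$, and the union only appears afterwards from $\sum_i|R_i|=|\bigcup_iR_i|$.

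Second, your admissibility worry about $\delta\tau q/n\ge 1$ is an artifact of recycling the letter $q$ (already the $A_q$ index) as the H\"older exponent, following the notation of Example \ref{ex:modelexample}. If the H\"older exponent were really tied to $q$, the step could indeed fail for small $\delta$. The paper instead takes a free auxiliary exponent $t>1$ chosen so large that $t\,\delta p^*_M/(nM')\ge 1$, applies H\"older with $(t,t')$ to $\sum_i\mu(R_i)^{p^*_M/p}\,|R_i|^{\delta p^*_M/(nM')}$, and then uses $\sum_i x_i^{\theta}\le\bigl(\sum_i x_i\bigr)^{\theta}$ for $\theta\ge 1$ on both factors (this is superadditivity of $x\mapsto x^{\theta}$ for $\theta\ge1$, not ``subadditivity for $\theta\le1$'' as you write); for the $\mu$-factor one only needs $t'p^*_M/p\ge1$, which is automatic since $p^*_M>p$ and $t'>1$, and then plain disjointness gives $\sum_i\mu(R_i)\le\mu(R)$, as you anticipated. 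With these two fixes your plan closes and coincides with the paper's proof.
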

Note that $p_M^*$ is smaller than the classical Sobolev exponent, namely the sharp one corresponding to the Lebsegue measure case:
\begin{equation*}
p<p_M^*<p^*=\frac{pn}{n-p} \qquad 1\leq p<n.
\end{equation*}
This condition says that the functional $a$ ``preserves smallness'' for the exponent $p_M^*$ defined in \eqref{eq:ptimes-Aq} with index $s= \frac{nM'}{\delta}$ and constant 
\begin{equation*}
\|a\|_{SD_{p^*_M,\cR}^{ \frac{nM'}{\delta} }(w)} \leq [w]_{A_{q,\cR}}^{ \frac{ \delta}{nqM} }.
\end{equation*}
That is, $a\in SD_{p^*_M,\cR}^{\frac{nM'}{\delta}}(w)$. The notation here is somewhat involved, but we will use it in a more simplified manner in our applications.
Recall that, as in the case of standard cubic $A_q$ weights, the strong $A_{q, \cR}$ weights satisfy the geometric estimate
\begin{equation*}
\left (\frac{|E|}{|R|}  \right )^{q}\leq [w]_{A_{q,\cR}}  \frac{w(E)}{w(R)},
\end{equation*}
valid for any subset   $E\subset R$ and any $R\in \cR$. 
Now we can proceed to the proof of the main estimate for the functional $a$.

\begin{proof}[Proof of Lemma \ref{lem:p^*}]
 Let $M>1$. For simplicity in the exposition, we will omit the subindex $M$ and just use $p^*$ instead of $p^*_M$. To verify the smallness preservation for the functional $a$, we compute, using again the key eccentricity property in Lemma \ref{lem:eccentr},  
\begin{eqnarray*}
\sum_{i}a(R_i)^{p^*}w(R_i) & = & \sum_{i} \mu(R_i)^{ \frac{p^*}{p} }    \left (    \frac{d(R_i)^\delta}{w(R_i)^{\frac{1}{p} -\frac{1}{p^*}} } \right )^{ p^*  }          \\
&= & \frac{1}{e(R)^{\delta p^*}}\sum_{i} \mu(R_i)^{\frac{p^*}{p}}    \left(    \frac{ |R_i|^\frac{\delta}{n} }{w(R_i)^{ \frac{\delta}{qnM}  } } \right )^{ p^* } \\
&= & \frac{1}{e(R)^{\delta p^*}}\sum_{i} \mu(R_i)^{\frac{p^*}{p}}    \left(    \frac{ |R_i| }{w(R_i)^{ \frac{1}{qM}  } } \right )^{ \frac{\delta p^*}{n} } \\
& \leq &   \frac{[w]_{A_{q,\cR}}^{\frac{\delta p^*}{nqM} }}{e(R)^{\delta p^*}}
\left( \frac{|R|^{q}}{w(R) }   \right)  ^{ \frac{\delta p^*}{nqM}   }  \  
  \sum_{i} \mu(R_i)^{\frac{p^*}{p}}    |R_i|^{ \frac{\delta p^* }{ nM'}   }. \\
\end{eqnarray*}
Now consider any $t>1$ such that $t\frac{\delta p^* }{ nM'}\geq 1 $ and apply H\"older's inequality to the sum above to obtain 
\begin{eqnarray*}
  \sum_{i} \mu(R_i)^{\frac{p^*}{p}}    |R_i|^{ \frac{\delta p^* }{ nM'}   } & \le &   \left (\sum_i  \mu(R_i)^{  \frac{t'p^*}{p} }    \right )^\frac{1}{t'} \left (\sum_i |R_i|\right )^{ \frac{\delta p^* }{ nM'}  }\\
& \le &    \left (\sum_i  \mu(R_i)    \right )^{ \frac{p^*}{p} }  \,  
\left (  \frac{|\bigcup_iR_i|}{|R|}\right)^{\frac{\delta p^* }{ nM'} } |R|^\frac{\delta p^* }{ nM'} .\\
& \leq &    \mu(R)^{ \frac{p^*}{p} }  \,  \left (  \frac{|\bigcup_iR_i|}{|R|}\right)^{\frac{\delta p^* }{ nM'} } |R|^\frac{\delta p^* }{ nM'}. \\
  \end{eqnarray*}
Therefore,
\begin{eqnarray*}
\sum_{i}a(R_i)^{p^*}w(R_i) &\le & 
\left( \frac{[w]_{A_{q,\cR}}|R|^{q}}{w(R) }   \right)  ^{ \frac{\delta p^*}{nqM}   }    
 \mu(R)^{  \frac{p^*}{p} }  \, \frac{|R|^{\frac{\delta p^* }{ nM'} }}{e(R)^{\delta p^*}}\, 
 \left (  \frac{|\bigcup_iR_i|}{|R|}\right)^{\frac{\delta p^* }{ nM'} }  \\
&= &  [w]_{A_{q,\cR}}^{\frac{\delta p^*}{nqM} }
\frac{|R|^{\frac{\delta p^*}{n}}}{w(R)^{  \frac{\delta p^*}{nqM}} }
\mu(R)^{ \frac{p^*}{p} }   \frac{1}{e(R)^{\delta p^*}}\, \, \left (  \frac{|\bigcup_iR_i|}{|R|}\right)^{\frac{\delta p^* }{ nM'} } \\
&= &  
[w]_{A_{q,\cR}}^{\frac{\delta p^*}{nqM} }
a(R)^{p^*}w(R)\, \left (  \frac{|\bigcup_iR_i|}{|R|}\right)^{\frac{\delta p^* }{ nM'} } .\\
\end{eqnarray*}

\end{proof}

\subsection{Optimal polynomials}\label{sec:polynomials}
We also include here some preliminaries on optimal polynomials.
Given a rectangle  $R\in \cR$ and an integer $m\ge 0$, we consider the space $\mathcal{P}_m$ of polynomials of degree at most $m$ in $n$ variables endowed with the inner product given by 
$$
<f,g>_R:=\avgint_R fg dx.
$$
There is an orthonormal basis with respect to this inner product that we will denote by $\{\phi_\alpha\}$, being $\alpha=(\alpha_1,\dots,\alpha_n)$ a multiindex of non negative integers such that $|\alpha|=\alpha_1+\cdots+\alpha_n\le m$. An important feature is that 
\begin{equation}\label{eq:equiv-norm-Pm}
\|\phi_r\|_{L^\infty}\le C\left (\frac{1}{|R|}\int_R |\phi_r|^2 dx\right) ^{1/2}=C,
\end{equation}
since the space $\mathcal{P}_m$ is finite dimensional and therefore all norms are equivalent. Let $P_R$ the projection defined by the formula
\begin{equation}\label{eq:PR}
P_R(f)=\sum_r \left (\frac{1}{|R|}\int_R f\phi_r dx\right )\phi_r.
\end{equation}

We clearly have from \eqref{eq:equiv-norm-Pm} that 
\begin{equation}\label{eq:Linfty-PR}
\|P_Rf\|_{L^\infty}\le N C^2 \avgint_R |f|,
\end{equation}
where $N$ depends on $m$. Moreover, as it is the case when $m=0$ and the projection is over the constants, we have the following optimality property:
\begin{equation*}
\inf_{\pi\in\mathcal{P}_m}\left (\avgint_R |f-\pi|^p\right) ^{1/p} \approx \left (\avgint_R|f-P_Rf|^p \right )^{1/p}.
\end{equation*}

\subsection{\texorpdfstring{$A_{\infty,\cR}$}{Ainfty} weights}\label{sec:weights}

We recall that the ``strong" $A_{\infty,\cR}$ class of weights is usually defined as  
$$ A_{\infty,\cR}=\bigcup_{p>1}A_{p,\cR}. $$
There are several different characterizations of the belonging of a weight to this class. We choose here to work with the so called Fujii-Wilson constant defined as
\begin{equation}\label{eq:AinftycR}
   [w]_{A_{\infty,\cR}}:=\sup_{R\in \cR}\frac{1}{w(R)}\int_R M_s(w\chi_R )\ dx <\infty.
\end{equation}
Although implicit in \cite{Fujii},  this definition was initially considered in \cite{HP} and used to derive a quite sharp Reverse H\"older Inequality  (RHI) estimate. 
We cite here an improved version from \cite{HPR1}, which is similar to the one obtained \cite{LPR} in the context of rectangles. The proof is an easy adaptation from cubes to rectangles and therefore will be omitted.

\begin{theorem}\label{thm:Ainfty-RHI-cR}
Let $w\in A_{\infty,\cR}$ in $\mathbb{R}^n$. Then for any rectangle $R\in \cR$,
\begin{equation*}
\avgint_{R} w^{1+\varepsilon}\ dx  \le 2\left(\avgint_{R} w \ dx\right)^{1+\varepsilon},
\end{equation*}
for any $\varepsilon>0$  such that $0<\varepsilon \leq \frac{1}{2^{n+1}[w]_{A_{\infty,\cR}}-1 }$. 
\end{theorem}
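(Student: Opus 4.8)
The statement is the sharp Reverse H\"older inequality for the Fujii--Wilson $A_\infty$ class, and the plan is to run the argument of Hyt\"onen--P\'erez \cite{HP} in the sharpened form of \cite{HPR1} (see also \cite{LPR} for the rectangular version), the only genuinely new points being the choice of dyadic lattice attached to $\cR$ and the way \eqref{eq:AinftycR} is fed into the decomposition. First I fix $R\in\cR$ and let $\mathcal D(R)$ denote its family of dyadic descendants, each obtained by \emph{simultaneously} bisecting all $n$ sides, exactly as in the proof of Lemma \ref{lem:eccentr}; by \eqref{eq:eccentricity-children} these share the eccentricity of $R$, and --- this is the point --- $\mathcal D(R)$ is a bona fide dyadic lattice: it is nested, each $R'\in\mathcal D(R)$ has $2^n$ children of measure $2^{-n}|R'|$, and every generation covers $R$. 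Hence Calder\'on--Zygmund stopping-time decompositions relative to $\mathcal D(R)$ behave precisely as for dyadic cubes, and after normalizing $\avgint_R w\,dx=1$ it suffices to prove $\avgint_R w^{1+\varepsilon}\,dx\le 2$.

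For each height $\lambda\ge 1$ I take $\{R_j^\lambda\}_j\subset\mathcal D(R)$ to be the maximal dyadic subrectangles of $R$ with $\avgint_{R_j^\lambda}w>\lambda$; these are pairwise disjoint, satisfy $\lambda<\avgint_{R_j^\lambda}w\le 2^n\lambda$ (the $2^n$ comes from the parent in $\mathcal D(R)$, and this is what will generate the exponent $2^{n+1}$), and $w\le\lambda$ a.e.\ off $\Omega_\lambda:=\bigcup_j R_j^\lambda$. The membership $w\in A_{\infty,\cR}$ enters at exactly one point: the local dyadic maximal operator is dominated pointwise by the strong maximal operator, $M^{\mathcal D(R')}(w\chi_{R'})\le M_s(w\chi_{R'})$ on $R'$, so by \eqref{eq:AinftycR}, applied to $R$ and to each $R_j^\lambda$,
\begin{equation*}
\int_{R'}M^{\mathcal D(R')}(w\chi_{R'})\,dx\le\int_{R'}M_s(w\chi_{R'})\,dx\le[w]_{A_{\infty,\cR}}\,w(R'),\qquad R'\in\{R\}\cup\{R_j^\lambda\}_{j,\lambda};
\end{equation*}
this is what makes the decomposition dissipate: for instance it gives at once the good-$\lambda$ bound $|\Omega_{2^{n+1}[w]_{A_{\infty,\cR}}\lambda}|\le\tfrac12|\Omega_\lambda|$ and the Carleson/packing property of the stopping family $\{R_j^\lambda\}$ with constant comparable to $[w]_{A_{\infty,\cR}}$.

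With these two ingredients --- the dyadic lattice structure of $\mathcal D(R)$ and the bound above --- the iteration/distribution-function scheme of \cite{HPR1} applies verbatim: one expands $\avgint_R w^{1+\varepsilon}\,dx\le 1+\varepsilon\,\frac1{|R|}\int_1^\infty\lambda^{\varepsilon-1}\big(\int_{\{w>\lambda\}\cap R}w\,dx\big)\,d\lambda$ via the layer-cake formula, dominates $\int_{\{w>\lambda\}\cap R}w\le\sum_j w(R_j^\lambda)$, and uses $\avgint_{R_j^\lambda}w\le 2^n\lambda$ together with the packing estimate to close the loop, the outcome being $\avgint_R w^{1+\varepsilon}\,dx\le 2$ precisely in the range $0<\varepsilon\le(2^{n+1}[w]_{A_{\infty,\cR}}-1)^{-1}$. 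Nothing here is conceptually new compared to the one-parameter proof, and I expect no real obstacle: the two things that actually require attention are the verification that $\mathcal D(R)$ is a dyadic lattice with parent/child measure ratio exactly $2^n$ (so that the sharp exponent comes out as $2^{n+1}[w]_{A_{\infty,\cR}}-1$ and not something worse) and the pointwise domination $M^{\mathcal D(R')}(w\chi_{R'})\le M_s(w\chi_{R'})$ that ties the combinatorics to the Fujii--Wilson constant; tracking constants through the iteration is the only slightly tedious bookkeeping. As usual one first runs the argument for the truncations $\min(w,N)$, whose $A_{\infty,\cR}$ constants are controlled by $[w]_{A_{\infty,\cR}}$, to guarantee finiteness of the left-hand side, and then lets $N\to\infty$.
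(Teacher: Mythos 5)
Your proposal is correct and is exactly the argument the paper intends: the paper omits the proof of Theorem \ref{thm:Ainfty-RHI-cR}, deferring to the dyadic scheme of \cite{HPR1} (cf. \cite{LPR}), and your two rectangular checkpoints --- that $\mathcal{D}(R)$ is a genuine dyadic lattice with $2^n$ children of measure $2^{-n}|R'|$, and that the local dyadic maximal operator is pointwise dominated by $M_s$ so that \eqref{eq:AinftycR} drives the stopping-time/packing estimates --- are precisely the only points where the adaptation from cubes to $\cR$ is needed. The bookkeeping you defer to \cite{HPR1} then gives the constant $2$ and the range $0<\varepsilon\le\bigl(2^{n+1}[w]_{A_{\infty,\cR}}-1\bigr)^{-1}$ unchanged, since the parent/child measure ratio is the same as in the cubic case.
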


\section{Proofs Part I: Analysis in  \texorpdfstring{$\cR$}{cR}}\label{sec:proofscR}

In this section we will present the proof of our main contributions within the family of rectangles $\cR$ ($n$-fold product of one dimensional intervals). 

We begin by proving Theorem \ref{thm:AutomejoraweakcR}.  To do this we need some previous results.

Given a rectangle $R\in \cR$, $M^d_R$ will stand for the dyadic maximal function adapted to the rectangle $R$. That is, 
$$
M^d_R(f)(x):=\sup_{J\ni x}\avgint_J |f(y)|\ dy \qquad J\in \mathcal{D}(R)
$$
where $\mathcal{D}(R)$ is the family of dyadic subrectangles of $R$. 

We will also use the appropriate sharp maximal function defined by
\begin{equation*}
M_{m}^\sharp f(x) = \sup_{R\ni x} \avgint_R |f(y)-P_Rf|\,dy,
\end{equation*}
where $m$ is the degree and $P_R f$ is the polynomial approximation of degree less than $m$.

We will need the following result which is similar to the one obtained in \cite{CantoPerez} where several extensions of the classical John-Nirenberg's theorem has been derived. Our result is the context of the basis $\cR$ but the proof is essentially the same as the one given in \cite{CantoPerez} for cubes. It is also an extension to $\cR$ of one of the main results of Karagulyan in \cite{Karagulyan}. 
  The key point is to use a Calder\'on--Zigmund (C--Z) dyadic decomposition adapted to a given fixed rectangle $R\in\cR$. The validity of such a C--Z decomposition is immediate from the cubic case: the dyadic structure is exactly the same in both cases. 
In addition, we will introduce the following notation for $w \in A_{\infty, \cR}$, $r>1$ and for any rectangle $R \in \cR$:
\[ 
w_r(R)= |R| \left( \avgint_R w^r\right)^\frac{1}{r} = |R|^{\frac{1}{r'}} \left( \int_R w^r\right)^\frac 1r.
\]

\begin{theorem}\label{thm:TeoremacR}
Let $f\in L_{loc}^{1}(\mathbb{R}^{n})$ and let $1\leq p <\infty$ and $1<r<\infty$. Then there is a dimensional constant $c$ such that for any  rectangle $R\in \cR$ the following estimate holds
\begin{equation}\label{eq:John-NirenbergcR-general}
\left(  \frac{1}{w_r(R)} \int_R \left(\frac{M^d_{R}(f-P_Rf)(x)}{M_{m}^\sharp f(x)}\, \right)^p w(x)dx\right)^\frac{1}{p} \leq c \, pr'.
\end{equation}
Hence, if further $w\in A_{\infty, \cR}$ we have 
\begin{equation}\label{eq:John-NirenbergcR-Ainfty}
\left(  \frac{1}{w(R)} \int_R \left(\frac{M^d_{R}(f-P_Rf)(x)}{M_{m}^\sharp f(x)}\, \right)^p w(x)dx \right)^\frac{1}{p} 
\leq c\, p\,[w]_{A_{\infty,\cR}}.
%
%
\end{equation}
\end{theorem}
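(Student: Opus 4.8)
The plan is to follow the Calderón--Zygmund stopping-time argument of \cite{CantoPerez,Karagulyan} adapted to the dyadic structure of a fixed rectangle $R\in\cR$. The first step is to establish a \emph{good-$\lambda$} or, equivalently, an exponential decay estimate for the distribution function of $M^d_R(f-P_Rf)$ relative to the sharp maximal function. Fix $R$ and abbreviate $g=f-P_Rf$, $h=M_m^\sharp f$. One performs a C--Z decomposition of $g$ at height $\lambda$ inside $\mathcal D(R)$: since the dyadic lattice on a rectangle is combinatorially identical to the cubic one, the usual selection of maximal dyadic subrectangles $\{R_j\}$ with $\avgint_{R_j}|g|>\lambda \ge \avgint_{\widehat{R_j}}|g|$ goes through verbatim, where $\widehat{R_j}$ is the dyadic parent. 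On each $R_j$ one replaces $g$ by $g-P_{R_j}g+P_{R_j}g-P_Rf$ (care is needed with the polynomial projections) and uses the $L^\infty$ bound \eqref{eq:Linfty-PR} together with the definition of $M_m^\sharp$ to control the average of $|g|$ on $R_j$ by $C\,h$ up to the C--Z constant. Iterating the decomposition produces the key recursive inequality
\begin{equation*}
\big|\{x\in R: M^d_R g>2^{k+1}C_0 h_R^{\inf},\ M_m^\sharp f \le \epsilon\}\big| \le \theta \,\big|\{x\in R: M^d_R g>2^{k}C_0 h_R^{\inf}\}\big|
\end{equation*}
with $\theta<1$ uniform, which upon summation yields exponential integrability: $M^d_R g/h$ lies in an exponential Orlicz class with dimensional constants.

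The second step is to convert this exponential estimate into the weighted $L^p$ bound \eqref{eq:John-NirenbergcR-general}. Here the natural tool is the Reverse Hölder / John--Nirenberg self-improvement packaged through the quantity $w_r(R)$: one writes
\begin{equation*}
\frac{1}{w_r(R)}\int_R \Big(\tfrac{M^d_R g}{h}\Big)^p w\,dx \le \frac{1}{w_r(R)} \Big(\int_R \Big(\tfrac{M^d_R g}{h}\Big)^{p s'}dx\Big)^{1/s'}\Big(\int_R w^{s}\Big)^{1/s}
\end{equation*}
by Hölder with a judiciously chosen $s$; taking $s=r$ matches the factor $\big(\int_R w^r\big)^{1/r}$ to $w_r(R)/|R|^{1/r'}$, and the remaining unweighted $L^{pr'}$ norm of the exponentially-integrable function $M^d_R g/h$ is controlled by $c\,pr'$ using the standard estimate $\|F\|_{L^q(\frac{dx}{|R|})}\lesssim q$ for $F$ in the exponential class. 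This produces the linear-in-$pr'$ bound claimed in \eqref{eq:John-NirenbergcR-general}.

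The third step derives \eqref{eq:John-NirenbergcR-Ainfty} from \eqref{eq:John-NirenbergcR-general} by optimizing in $r$. By the Reverse Hölder inequality for $A_{\infty,\cR}$ weights (Theorem \ref{thm:Ainfty-RHI-cR}), choosing the exponent $r=1+\varepsilon$ with $\varepsilon\sim \frac{1}{2^{n+1}[w]_{A_{\infty,\cR}}}$ gives $w_r(R)\le 2^{1/r}\,w(R)\le 2\,w(R)$ while $r'\sim 2^{n+1}[w]_{A_{\infty,\cR}}$... which would only give the weaker exponential bound. The correct route, as in \cite{HPR1,CantoPerez}, is instead to run the $L^p$ estimate with a \emph{small} power and exploit that $w\,dx$ with $w\in A_{\infty,\cR}$ doubles along the C--Z tree so that the good-$\lambda$ inequality can be performed directly against the measure $w\,dx$, with the contraction constant $\theta_w$ depending on $[w]_{A_{\infty,\cR}}$; summing the resulting geometric series and extracting the $L^p$ norm then yields the linear bound $c\,p\,[w]_{A_{\infty,\cR}}$. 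Concretely, one uses that for $E\subset R_j$ one has $\frac{w(E)}{w(R_j)}\le C\big(\frac{|E|}{|R_j|}\big)^{1/r}$ with $r-1\sim [w]_{A_{\infty,\cR}}^{-1}$, feeding the exponential-decay iteration against $w$.

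The main obstacle is precisely this last passage: obtaining the \emph{linear} dependence on $[w]_{A_{\infty,\cR}}$ and on $p$, rather than the exponential dependence that a naive Hölder splitting would give. This requires either the sharp quantitative form of the weighted good-$\lambda$ inequality with the sharp RHI exponent from Theorem \ref{thm:Ainfty-RHI-cR}, carefully tracking how the contraction ratio degrades as $[w]_{A_{\infty,\cR}}\to\infty$, or the interpolation-with-change-of-measure trick of \cite{CantoPerez} that interpolates \eqref{eq:John-NirenbergcR-general} between two values of $r$. Everything else — the C--Z decomposition, the handling of the polynomial projections $P_R$ via \eqref{eq:Linfty-PR} and the optimality property, and the dyadic structure on rectangles — is a routine transcription of the cubic arguments, since the dyadic lattice of a rectangle carries exactly the same combinatorics as that of a cube.
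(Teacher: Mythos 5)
Your treatment of \eqref{eq:John-NirenbergcR-general} follows the same route the paper takes (it defers to \cite[Theorem 7.1]{CantoPerez}): the unweighted exponential decay for $M^d_R(f-P_Rf)/M^\sharp_m f$ via the dyadic Calder\'on--Zygmund iteration on $R$, followed by H\"older with exponent $r$, which replaces $\frac{1}{w_r(R)}\int_R F^p w$ by $\big(\avgint_R F^{pr'}\big)^{1/r'}$ and reduces matters to $\|F\|_{L^{pr'}(dx/|R|)}\lesssim pr'$ for $F$ in the exponential class. That part is fine, including the observation that the dyadic lattice of a rectangle has the same combinatorics as that of a cube and that the polynomial projections are handled through \eqref{eq:Linfty-PR} and $P_{R_j}P_R=P_R$.

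Where you go wrong is in the third step. You write down the correct derivation of \eqref{eq:John-NirenbergcR-Ainfty} --- take $r=1+\varepsilon$ with $\varepsilon=\frac{1}{2^{n+1}[w]_{A_{\infty,\cR}}-1}$ from Theorem \ref{thm:Ainfty-RHI-cR}, so that $w_r(R)\le 2\,w(R)$ and $r'=2^{n+1}[w]_{A_{\infty,\cR}}$ --- and then dismiss it as giving ``only the weaker exponential bound.'' It does not: substituting this $r$ into \eqref{eq:John-NirenbergcR-general} gives the bound $c\,p\,r'=c\,2^{n+1}\,p\,[w]_{A_{\infty,\cR}}$, and $2^{n+1}$ is a purely dimensional factor absorbed into the constant $c$ of \eqref{eq:John-NirenbergcR-Ainfty}. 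There is no exponential dependence on $[w]_{A_{\infty,\cR}}$ anywhere in this computation; the dependence on the weight is exactly linear, which is the whole point of stating \eqref{eq:John-NirenbergcR-general} with the quantity $w_r(R)$ and a bound linear in $r'$. This two-line substitution \emph{is} the paper's proof of the second inequality. Consequently the ``main obstacle'' you identify at the end --- needing a sharp weighted good-$\lambda$ iteration against $w\,dx$ or an interpolation-with-change-of-measure argument to reach linearity in $[w]_{A_{\infty,\cR}}$ --- does not exist for this theorem. Your proposed detour is not wrong in substance (it is essentially how Corollary \ref{COROLARIOcR} is obtained and used elsewhere), but it is unnecessary here, and as written it contains a slip: the $A_\infty$ absolute-continuity estimate coming from the reverse H\"older inequality reads $\frac{w(E)}{w(R_j)}\le 2\big(\frac{|E|}{|R_j|}\big)^{1/r'}$, with exponent $1/r'$ rather than $1/r$.
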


The proof of (\ref{eq:John-NirenbergcR-general}) follows the same ideas as in the proof of \cite[Theorem 7.1]{CantoPerez}. If we assume the extra condition of $w$ being an $A_\infty$ weight, then we can apply the reverse Hölder inequality for $\cR$ stated in Theorem \ref{thm:Ainfty-RHI-cR} to get \eqref{eq:John-NirenbergcR-Ainfty}.

\begin{corollary}\label{COROLARIOcR}
Let $f$ and   $w$ as before. Then for any  rectangle $R\in \cR$
\[ 
\norm{\frac{\displaystyle M^d_{R}(f-P_Rf)}{\displaystyle M_{m}^\sharp f}}_{\exp L(R, \frac{wdx}{w(R)})} \leq c\, [w]_{A_{\infty,\cR}}
\]
and hence there exist dimensional constants $c_1,c_2>0$ such that for any  rectangle $R\in \cR$ and $\lambda, \gamma >0$ 
\begin{equation*}   
w\big( \{x\in R: M^d_{R}(f-P_Rf)> \lambda, M_{m}^\sharp f(x) \leq \gamma \lambda \} \big)  \leq c_1\,
{e^{\frac {-c_2}{ \gamma  [w]_{A_{\infty,\cR}} }}   }\, w(R).
\end{equation*} 
\end{corollary}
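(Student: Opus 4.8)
The plan is to deduce the exponential integrability statement directly from the $L^p$ bound \eqref{eq:John-NirenbergcR-Ainfty} of Theorem \ref{thm:TeoremacR} by optimizing over $p$, and then obtain the distributional estimate as a routine consequence of membership in the Orlicz space $\exp L$. First I would fix a rectangle $R\in\cR$ and write $g(x):=M^d_R(f-P_Rf)(x)/M^\sharp_m f(x)$. Inequality \eqref{eq:John-NirenbergcR-Ainfty} says precisely that
\begin{equation*}
\left(\frac{1}{w(R)}\int_R g(x)^p\,w(x)\,dx\right)^{1/p}\le c\,p\,[w]_{A_{\infty,\cR}}
\end{equation*}
for every $p\in[1,\infty)$, with $c$ a dimensional constant. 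The standard passage from $L^p$ bounds with linear growth in $p$ to an $\exp L$ bound is then purely formal: since $e^{t}=\sum_{k\ge 0} t^k/k!$, for a parameter $\mu>0$ to be chosen one has
\begin{equation*}
\frac{1}{w(R)}\int_R \exp\!\left(\frac{g(x)}{\mu\,c\,[w]_{A_{\infty,\cR}}}\right)w(x)\,dx
=\sum_{k\ge 0}\frac{1}{k!\,(\mu\,c\,[w]_{A_{\infty,\cR}})^k}\,\frac{1}{w(R)}\int_R g^k\,w\,dx
\le \sum_{k\ge 0}\frac{k^k}{k!\,\mu^k},
\end{equation*}
where in the last step I used \eqref{eq:John-NirenbergcR-Ainfty} with $p=k$ (the $k=0$ term being $1$). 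By Stirling, $k^k/k!\le e^k$, so the series is dominated by $\sum_k (e/\mu)^k$, which converges and is bounded by $2$ as soon as $\mu\ge 2e$. Choosing such a $\mu$ (a universal constant) yields
\begin{equation*}
\frac{1}{w(R)}\int_R \exp\!\left(\frac{g(x)}{\mu\,c\,[w]_{A_{\infty,\cR}}}\right)w(x)\,dx\le 2,
\end{equation*}
which is exactly the statement $\|g\|_{\exp L(R,\frac{w\,dx}{w(R)})}\le c\,[w]_{A_{\infty,\cR}}$ after renaming the constant (recall the Luxemburg norm is the infimum of $\lambda>0$ making the averaged integral of $\exp(g/\lambda)$ at most a fixed threshold, and the threshold $2$ can be absorbed).

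For the distributional estimate, I would argue on the set $E_{\lambda,\gamma}:=\{x\in R: M^d_R(f-P_Rf)(x)>\lambda,\ M^\sharp_m f(x)\le\gamma\lambda\}$. On this set $g(x)=M^d_R(f-P_Rf)(x)/M^\sharp_m f(x)>\lambda/(\gamma\lambda)=1/\gamma$ pointwise, so by Chebyshev applied to the exponential of $g$,
\begin{equation*}
\frac{w(E_{\lambda,\gamma})}{w(R)}\le \exp\!\left(\frac{-1/\gamma}{\mu\,c\,[w]_{A_{\infty,\cR}}}\right)\cdot\frac{1}{w(R)}\int_R\exp\!\left(\frac{g(x)}{\mu\,c\,[w]_{A_{\infty,\cR}}}\right)w(x)\,dx
\le 2\,\exp\!\left(\frac{-c_2}{\gamma\,[w]_{A_{\infty,\cR}}}\right),
\end{equation*}
with $c_2=1/(\mu c)$ a dimensional constant; setting $c_1=2$ gives the claimed bound. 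One should note that $E_{\lambda,\gamma}$ could in principle have points where $M^\sharp_m f$ vanishes, but there $M^d_R(f-P_Rf)$ also vanishes (the oscillation controls the dyadic maximal function of the oscillation on small scales), so the quotient convention $0/0$ causes no trouble; alternatively one replaces $M^\sharp_m f$ by $M^\sharp_m f+\varepsilon$ and lets $\varepsilon\to0$.

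The only genuinely delicate point — and the one I would treat as the main obstacle — is the legitimacy of interchanging the sum and the integral in the exponential expansion and, relatedly, ensuring the right-hand constant in \eqref{eq:John-NirenbergcR-Ainfty} is genuinely $c\,p\,[w]_{A_{\infty,\cR}}$ uniformly in $p$ (this is what forces the linear-in-$p$ growth to be essential; an exponential dependence on $p$ would destroy the argument). Both are handled by monotone convergence applied to the partial sums of the series of nonnegative terms $g^k/(k!(\mu c[w])^k)$, so in fact no interchange issue arises. Thus the corollary follows from Theorem \ref{thm:TeoremacR} by these standard, essentially dimension-free manipulations.
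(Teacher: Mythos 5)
Your argument is correct and is exactly the standard derivation the paper intends (the paper states the corollary without proof, as a direct consequence of the linear-in-$p$ growth in \eqref{eq:John-NirenbergcR-Ainfty} via the power-series expansion of the exponential, followed by Chebyshev). Your side remarks — the monotone convergence justification for interchanging sum and integral, and the observation that $M^\sharp_m f(x)=0$ forces $M^d_R(f-P_Rf)\equiv 0$ on $R$ — are both accurate and dispose of the only potential pitfalls.
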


\begin{proof}[Proof of Theorem \ref{thm:AutomejoraweakcR}]

With all these ingredients, the proof is an adaptation of the main result in \cite{FPW98} combined wth the exponential decay 
good-$\lambda$ type formula from Corollary  \ref{COROLARIOcR} very much as in \cite{CantoPerez}. We omit the details. 

\end{proof}

We will provide here the proof of our main general result Theorem \ref{thm:AutomejorastrongcR}. Before to proceed with the proof we recall that what we need to prove is that the quantity
$$
X=\sup_{R\in \cR} \left (\frac{1}{w(R)}\int_{R}\left |\frac{f-P_{R}f}{a(R)}\right |^p \, w dx \right )^{1/p}
$$
is bounded with a precise control on the bound. A brief comment on a technical step in the proof is necessary here. Since the argument involves the trick of bounding $X$ by some variation of itself, the partial step of proving that it is finite is crucial. In many standard situations, this can be achieved by assuming that the function $f$ is bounded and also by perturbing the functional $a$ to avoid any possible degeneracy considering $a_\varepsilon(R):=a(R)+\varepsilon$, $\varepsilon>0$. Once a uniform bound involving $a_\varepsilon$ is obtained, a limiting argument proves that the original $X$ is also finite. The problem is that the hypothesis on $a$ satisfying a $SD_{p,\cR}^s$ condition is not shared by the perturbed new functional $a_\varepsilon$. However, some weaker variant is indeed true and it is sufficient for the proof. Here (and only here!) is where the $A_\infty$ condition pops in, remarkably absent in the quantitative estimate. We will omit the technical details, we refer the reader to \cite{PR-Poincare} for a detailed discussion.

\begin{proof}[Proof of Theorem \ref{thm:AutomejorastrongcR}]
For the proof we follow the ideas from \cite{PR-Poincare}, starting with a standard local C--Z decomposition 
in the context of $\cR$.

We consider the local C-Z decomposition  of $\frac{|f-P_{R}f|}{a(R)}$ relative to $R$ at level $L$ for a large universal constant $L>1$ to be chosen. Let $\mathcal{D}(R)$ be the family of dyadic subrectangles of $R$ which also belong to $\cR$. The C--Z decomposition yields a collection $\{R_{j}\}_j$ of rectangles such that $R_j\in \mathcal{D}(R)$, maximal with respect to inclusion, satisfying
\begin{equation}\label{eq:CZ1}
L  < \frac{1}{|R_{j}|  }\int_{R_{j}} \frac{|f-   P_{R}f | }{{a(R)}} \, dy. 
\end{equation}
Then, if $R$ is dyadic with $R \supset R_j$ 
\begin{equation}\label{eq:CZ2}
\frac{ 1 }{|R|  }
\int_{R} \frac{|f-P_{R}f|}{{a(R)}} \, dy   \leq L
\end{equation}
and hence 
\begin{equation}\label{eq:CZ3}
L  < \frac{ 1 }{|R_{j}|  }
\int_{R_{j}} \frac{ |f-P_{R}f| }{{a(R)}} \, dy \leq L\,2^{n}
\end{equation}
for each integer $j$.  Also note that 
$$  \left \{x\in R: M_R^d\left ( \frac{|f-P_{R}f|}{{a(R)}}\chi_{  R }\right )(x) > L   \right \} = \bigcup_{j}R_j=:\Omega_L
$$
where $M^d_R$ stands for the dyadic maximal function adapted to the rectangle $R$. That is, 
$$
M^d_R(f)(x):=\sup_{J\ni x}\avgint_J |f(y)|\ dy\qquad x\in J, J\in \mathcal{D}(R).
$$
Then, by the Lebesgue differentiation theorem it follows that
\begin{equation}\label{eq:notinOmega}
\frac{|f(x)-P_{R}f(x)|}{{a(R)}} \leq L   \qquad              a.e. \ x \notin \Omega_L.
\end{equation}

Also, observe that by \eqref{eq:CZ1} (or the weak type $(1,1)$ property of $M^d_R$) and recalling our starting assumption \eqref{eq:UnWeightedStartingPointL1}, we have that $\{R_j\}_j\in S(L)$, namely
\begin{equation*}\label{eq:CZ4a}
|\Omega_L|=|\bigcup_{j}R_j | < \frac{|R|}{L}.
\end{equation*}

Now, given the C-Z decomposition of the cube $R$, we decompose the function $\dfrac{f-P_Rf}{{a(R)}}$ as
\begin{eqnarray*}\label{eq:CZ-f-fR}
f-P_{R}f & = & \hspace{-.1cm}\left (f-P_{R}f\right )\chi_{\Omega_L^c}
+\sum_j \left (P_{R_j}f-P_{R}f\right )\chi_{R_j}
+\sum_j \left (f-P_{R_j}f\right )\chi_{R_j}\\
& = & A_1 + A_2 + A_3.
\end{eqnarray*}

For $A_1$ and $A_2$ we have a pointwise estimate. The bound for $A_1$ follows from \eqref{eq:notinOmega}.
Also note that $P_{R_j}P_R=P_{R}$ and recall the bound \eqref{eq:Linfty-PR}. Then we can control $A_2$ as
\begin{eqnarray*}
|A_2(x)| & \le & \sum_j \left | (P_{R_j}f(x)-P_{R}f(x))\right |\chi_{R_j}(x)\\
& = & \sum_j \left | P_{R_j}(f-P_{R}f)\right (x)|\chi_{R_j}(x)\\
& \le & \sum_j NC^2 \avgint_{R_j}|f-P_{R}f|\chi_{R_j}(x)\\
& \le & NC^2 2^n L a(R)
\end{eqnarray*}
since for any given $x \in\Omega$ there is only one $j$ such that $x\in R_j$. So far we have pointwise bounds for $A_1$ and $A_2$. Now we move on to the desired $L^p$ norm estimate. Consider the local weighted $L^p$ space on $R$ with the measure $\mu:=\frac{w(x)dx}{w(R)}$. By the triangle inequality, we have that
\begin{eqnarray*}
\left \|\frac{f-P_{R}f}{a(R)}\right \|_{L^p_\mu}  & \le  &\left \|\frac{A_1}{a(R)}\right \|_{L^p_\mu} 
+\left \|\frac{A_2}{a(R)}\right \|_{L^p_\mu} 
+\left \|\sum_j \frac{f-P_{R_j}f}{a(R)}\chi_{R_j}\right \|_{L^p_\mu} \\
& \le  & \tilde{c}L + \left (\frac{1}{w(R)}\int_R\sum_j \frac{|f-P_{R_j}f|^p}{a(R)^p}\chi_{R_j}wdx\right )^{\frac{1}{p}}.
\end{eqnarray*}	
Since the $R_j$'s are disjoint, we can compute
\begin{eqnarray*}
\int_R\sum_j \frac{|f-P_{R_j}f|^p}{a(R)^p}\chi_{R_j}wdx  & = & \sum_j\frac{a(R_j)^pw(R_j)}{a(R)^pw(R_j)}\int_{R_j}\frac{|f-P_{R_j}f|^p}{a(R_j)^p}wdx\\
& \le & X^p \sum_j\frac{a(R_j)^pw(R_j)}{a(R)^p},
\end{eqnarray*}	
where $X$ is the quantity defined as
\begin{equation*}
X=\sup_{R\in \cR}\left  (\frac{1}{w(R)}\int_{R}\frac{|f-P_{R}f|^p}{a(R)^p}wdx\right )^{1/p}.
\end{equation*}
Averaging over $w(R)$ and raising to the $1/p$ power, we obtain
\begin{eqnarray*}
\left (\frac{1}{w(R)}\int_{R}\left |\frac{f-P_{R}f}{a(R)}\right |^p \, w dx \right )^{1/p} & \le &  \tilde{c}L+X\left (\sum_j\frac{a(R_j)^pw(R_j)}{a(R)^pw(R)}\right )^\frac{1}{p}\\
& \le & \tilde{c}L + X \frac{\|a\|}{L^{1/s}}.
\end{eqnarray*}	
Taking the supremum over all $R\in \cR$, we finally obtain
\begin{equation}\label{eq:isolating-X}
X \le \tilde{c}L + X \frac{\|a\|}{L^{1/s}},
\end{equation}
for any $L>1$.
Choosing here an appropriate $L$ as $L=e\max\{\|a\|^s,1\}$ and using the elementary fact that $\left (e^{1/s}\right )'\le 1+s$, we finally get that, for any $R\in \cR$,
\begin{equation*}
\left (\frac{1}{w(R)}\int_{R}\left |f-P_{R}f\right |^p \, w dx \right )^{1/p} \le c_n (1+s)\max\{\|a\|^s,1\}\,a(R),
\end{equation*}
which is the desired estimate. 
\end{proof}

Now we are ready to prove the consequences that can be derived from our general theorem. We start by proving Theorem \ref{thm:PoSo-Aq-diam-delta}, where we exploit the membership of the weight $w$ to a better $A_{q,\cR}$ class to reach a higher Poincar\'e-Sobolev exponent.

\begin{proof}[Proof of Theorem \ref{thm:PoSo-Aq-diam-delta}]
Recall that the functional from \eqref{eq:model-d(Q)^delta-a(Q)} satisfies a $SD_{p,\cR}^{n/\delta}$ condition with norm 1 (see Example \ref{ex:mainexample}). This is sufficient to prove $(p,p)$-Poincar\'e inequalities by using Theorem \ref{thm:AutomejorastrongcR}. Here, in order to achieve a higher integrability, 
the problem reduces to prove a better smallness preservation condition that comes with a price: the norm with respect to the better parameters is no longer uniformly bounded on the weights. Hence, a careful optimization has to be done. 
We will use the main estimate from Lemma \ref{lem:p^*},  choosing $M>1$ with $s=\frac{nM'}{\delta}$, namely 
\begin{equation}\label{ineq:||a||}
\|a\|:=\|a\|_{SD_{p^*_M,\cR}^{ \frac{nM'}{\delta} }(w)} \leq [w]_{A_{q,\cR}}^{ \frac{ \delta}{nqM} }
\end{equation}
where \,$ \displaystyle \frac{1}{p} -\frac{1}{ p_M^*}=\frac{\delta}{nqM}.$ The choice will depend on $[w]_{A_{q,\cR}}$ so that the expression $s\|a\|^s$ remains bounded. We choose 
\begin{equation}\label{eq:M}
M= 1+ \log[w]_{A_{q,\cR}}^\frac{1}{q}.
\end{equation}
Observe that both $M$ and $M'=	\frac{1+ \log[w]_{A_{q,\cR}}^\frac{1}{q}}{ \log[w]_{A_{q,\cR}}^\frac{1}{q}}> 1 $ and also that this choice of $M$ implies that the exponent $p^*_M$ is exactly the value $p^*_w$ from \eqref{eq:ptimes-Aq}.
Hence applying Theorem \ref{thm:AutomejorastrongcR}  with $p$ replaced by $p^*_w$  and using estimate \eqref{ineq:||a||} we obtain 
\begin{eqnarray*}
\left( \frac{1}{ w(R)  } \int_{ R }   |f -P_{R}f|^{p_w^*}    \,wdx\right)^{\frac{1}{p_w^*}}  & \leq  & C_n s\, \|a\|^{s}\, a(R) \\
& \le & C_n \frac{M'}{\delta} [w]_{A_{q,\cR}}^{\frac{\delta}{nqM}\frac{nM'}{\delta }} \, a(R)\\
& \le & C_n\frac{M'}{\delta}  [w]_{A_{q,\cR}}^{\frac{1}{ \log [w]_{A_{q,\cR}}  }} \, a(R)\\
& = & c_n \frac{1+ \log[w]_{A_{q,\cR}}^\frac{1}{q}}{ \log[w]_{A_{q,\cR}}^\frac{1}{q}}\frac{1}{\delta}\, a(R).
\end{eqnarray*}
This yields the proof of Theorem \ref{thm:PoSo-Aq-diam-delta} providing a good estimate for weights with large nontrivial $A_{q,\cR}$ constants. A simple computation shows that for any weight such that $[w]_{A_{q,\cR}}>1$, if we choose $K$ large such that $ [w]_{A_{q,\cR}}=e^{\frac{q}{K}}$, then we obtain a cleaner inequality
\begin{equation*}
\left( \frac{1}{ w(R)  } \int_{ R }   |f -f_{R}|^{p_w^*}    \,wdx\right)^{\frac{1}{p_w^*}} 
\le 
 c_n (K+1)\frac{1}{\delta}\, a(R). 
\end{equation*}
\end{proof}

Now we present the proof of Theorem \ref{thm:PoSo-Aq-diam-delta-weak} covering, in particular, the case of flat weights.

\begin{proof}[Proof of Theorem \ref{thm:PoSo-Aq-diam-delta-weak}]

We can assume that the weight $w$ satisfies $[w]_{A_q,\cR}\leq e^q$, since in the other case we simply use the previous Theorem \ref{thm:PoSo-Aq-diam-delta}.
We claim that for any family $\{R_i\}_i$ of pairwise disjoint dyadic subrectangles of $R$, the following inequality holds:
\begin{equation}\label{eq:p1*-importantEstimate}
\left(\sum_{i}a(R_i)^{ p^*_{1} }\,\frac{w(R_i)}{w(R)} \right)^{\frac{1}{p^*_{1}} } \leq  e^{ \frac{\delta}{n}  }
\, a(R), 
\end{equation}
recalling the notation $ \displaystyle \frac{1}{p} -\frac{1}{ p_1^*}=\frac{\delta}{n}\frac{1}{q}$. This means that $a\in D_{p^*_{1},\cR}(w)$ 
with $\|a\|_{D_{p^*_{1},\cR}(w)} \leq   e^{ \frac{\delta}{n} }$ according to Definition \ref{def:Dp(w)}.  Let's postpone the proof of the claim for the time being.  Now, applying Theorem \ref{thm:AutomejoraweakcR}, we get
\begin{eqnarray*}
\| f-f_R\|_{L^{p^*_{1},\infty}\big( R, \frac{w\,dx}{w(R)}\big)} &\leq&  c\, p^*_{1}\, [w]_{A_{\infty,\cR}}\,\|a\|_{D_{p^*_{1},\cR}(w)} \, a(R)\\
&\leq & c\, p^*_{1}\, [w]_{A_{q,\cR}}\,\|a\|_{D_{p^*_{1},\cR}(w)} \, a(R)\\
&\leq & c\, p^*_{1}\,e^q\,  e^{ \frac{\delta}{n} } \, a(R).  
\end{eqnarray*}
Consider here the same choice of $M=1+\log [w]^{\frac{1}{q}}_{A_{q,\cR}}$ as in the proof of Theorem \ref{thm:PoSo-Aq-diam-delta} above. Since  $p^*_{1}>p^*_M$ (just note that $p^*_M$ is a decreasing function on $M$), by Jensen's inequality which holds 
at weak level (simply use that the inner part of what is inside $()^{\frac1p}$ is less or equal than one) we have,

\begin{eqnarray*}
\| f-f_R\|_{L^{p^*_M,\infty}\big( R, \frac{w\,dx}{w(R)}\big)} & \leq  & \| f-f_R\|_{L^{p^*_{1},\infty}\big( R, \frac{w\,dx}{w(R)}\big)}\\
&\le &
c\, p^*_{1}\,e^q\, e^{ \frac{\delta}{n}  } \, a(R).  
\end{eqnarray*}
This is the desired weak type estimate.  To conclude the proof of the theorem we need to check claim \eqref{eq:p1*-importantEstimate}. But this is a variant of the proof of Lemma \ref{lem:p^*}, indeed by the key eccentricity property in Lemma \ref{lem:eccentr},  
\begin{eqnarray*}
\sum_{i}a(R_i)^{p^*_{1}}w(R_i) & = & \sum_{i} \mu(R_i)^{ \frac{p^*_{1}}{p} }    \left (    \frac{d(R_i)^\delta}{w(R_i)^{\frac{1}{p} -\frac{1}{p^*_{1}}} } \right )^{ p^*_{1}  }          \\
&= & \frac{1}{e(R)^{\delta p^*_{1}}}\sum_{i} \mu(R_i)^{\frac{p^*_{1}}{p}}    \left(    \frac{ |R_i|^\frac{\delta}{n} }{w(R_i)^{ \frac{\delta}{qn}  } } \right )^{ p^*_{1} } \\
&= & \frac{1}{e(R)^{\delta p^*_{1}}}\sum_{i} \mu(R_i)^{\frac{p^*_{1}}{p}}    \left(    \frac{ |R_i| }{w(R_i)^{ \frac{1}{q}  } } \right )^{ \frac{\delta p^*_{1}}{n} } \\
&\leq &\frac{1}{e(R)^{\delta p^*_{1}}}\sum_{i} \mu(R_i)^{\frac{p^*_{1}}{p}}    \left( [w]_{A_{q,\cR}}^{ \frac{1}{q}  }  \frac{ |R| }{w(R)^{\frac{1}{q}} } \right )^{ \frac{\delta p^*_{1}}{n} } \\
& \leq &  \frac{   e^{\frac{\delta p^*_{1}}{n} } }{e(R)^{\delta p^*_{1}}}
\left( \frac{|R|^{q}}{w(R) }   \right)  ^{ \frac{\delta p^*_{1}}{nq}   }  \  
  \sum_{i} \mu(R_i)^{\frac{p^*_{1}}{p}}\\
& \leq &   \frac{   e^{\frac{\delta p^*_{1}}{n} } }{e(R)^{\delta p^*_{1}}}
\left( \frac{|R|^{q}}{w(R) }   \right)  ^{ \frac{\delta p^*_{1}}{nq}   }  \  
\mu(R)^{\frac{p^*_{1}}{p}}\\
& =&     e^{\frac{\delta p^*_{1}}{n} }
a(R)^{p_{1}^*}w(R)
\\
\end{eqnarray*}
since we assumed that $[w]_{A_q,\cR}\leq e^q$.  This gives the claim \eqref{eq:p1*-importantEstimate} finishing the proof of the theorem.
\end{proof}

\begin{proof}[Proof of Corollary \ref{cor:ptimes-Aq-m-derivatives}]
In order to apply our self improving method from Theorem \ref{thm:PoSo-Aq-diam-delta} or Theorem \ref{thm:PoSo-Aq-diam-delta-weak}, we need a starting point as \eqref{eq:Sarting-PoSo-Aq} where the functional $a$ is of the form \eqref{eq:model-d(Q)^delta-a(Q)}. Regarding the starting point, we know from the work of Chua in   \cite[Lemma  2.5]{Chua}    that the following unweighted inequality holds \footnote{The results in \cite{Chua} are derived for cubes but the same results hold for rectangles as well.}
\begin{equation*}\label{eq:HigherOrderPoincare-Unweighted}
\frac{1}{ |R| }\int_{R} \absval{f-  P_Rf} \,dx \le C\,
\frac{ d(R)^{m} }{  |R| }
\int_{R} \absval{\nabla ^{m}f} \,dx,
\end{equation*}
where $P_Rf$  is the ``approximating" polynomial as defined in \eqref{eq:PR} and for a class of Sobolev functions with appropriate smoothness, namely $f\in W^{m,1}(R)$. This result follows from the well known first order $(1,1)$-Poincar\'e inequality on convex sets. From this inequality and using the standard   argument of introducing an $A_{p,\cR}$ weight we derive that
\begin{equation}\label{eq:HigherOrderPoincare-Weighted}
\avgint_{R} \absval{f -   P_Rf} \,dx \le C\,
[w]^{1/p}_{A_{p,\cR}}d(R)^{m} 
\left (\frac{1}{w(R)}\int_{R} \absval{\nabla ^{m}f}^p w\,dx\right )^{\frac1p}.
\end{equation}
Hence, we have our starting point 
\begin{equation*}
\frac{1}{ |R| }\int_{R} \absval{f-      P_Rf  }\,dx \leq a(R),
\end{equation*}
where the functional $a$ is defined by 
\begin{equation*}
a(R)=C\,[w]^{\frac1p}_{A_{p,\cR}}d(R)^{m} 
\left (\frac{1}{w(R)}\int_{R} \absval{\nabla ^{m}f}^p w\,dx\right )^{\frac1p}
\end{equation*}
which is a particular case of \eqref{eq:Model-a(R)}. An application of Theorem \ref{thm:PoSo-Aq-diam-delta} gives the result for nontrivial weights with $A_q$ constant bounded from below by $e^q$. For a result valid for flat weights, including the unweighted classical Poincar\'e-Sobolev inequality for the Lebesgue measure, we can only use Theorem \ref{thm:PoSo-Aq-diam-delta-weak} to get a weak inequality. 
\end{proof}

Now we move on to prove Corollary \ref{cor:fractional}. 

\begin{proof}[Proof of Corollary \ref{cor:fractional}]
Again, the key resides in proving a suitable starting point. 
Consider, for $\delta>0$, the functional defined by the expression 
\begin{equation*}
a(R)   := [w]^\frac{1}{p}_{A_{p,\cR}}\frac{d(R)^{\delta}}{e(R)^\frac{n}{p}}\left(\frac{1}{w(R)}\int_R A(R,x)\,w(x)dx \right)^{\frac1p}
\end{equation*}
 where
 \begin{equation}\label{eq:a(Q)-fractional}
 A(R,x)=\int_R \frac{|f(x)-f(y)|^p}{|x-y|^{\delta p+n}}\,w(y)dy.
 \end{equation}

Note that  by Hölder's inequality combined with the definition, again,  of the $A_{p,\cR}$ condition together with the concept of eccentricity we obtain
\begin{eqnarray*}
\avgint_R |f-f_R|&\approx& \frac{1}{|R|}\int_R \frac{1}{|R|}\int_R |f(x)-f(y)|\,dy\,dx \\
&\leq& [w]^\frac{1}{p}_{A_{p,\cR}} \left(\frac{1}{w(R)|R|}\int_R \int_R |f(x)-f(y)|^p \,dy\,w(x)dx\right)^\frac{1}{p}\\ 
&\leq&  [w]^\frac{1}{p}_{A_{p,\cR}}\,\left(  \frac{d(R)^{\delta p}}{e(R)^nw(R)}\int_R \int_R \frac{|f(x)-f(y)|^p}{|x-y|^{\delta p+n}} \,dy\,w(x)dx\right)^\frac{1}{p}\\
& = &   [w]^\frac{1}{p}_{A_{p,\cR}} \,\frac{d(R)^{\delta }}{e(R)^\frac{n}{p}}\left(  \frac{1}{w(R)}\int_R \int_R \frac{|f(x)-f(y)|^p}{|x-y|^{\delta p+n}} \,dy\,w(x)dx\right)^\frac{1}{p}\\
& = & a(R).
\end{eqnarray*}
Note that the functional $a$ here is not \emph{exactly} the same as in the model example \eqref{eq:model-d(Q)^delta-a(Q)} due to the presence of the eccentricity in front. But since we are testing the smallness preservation on families of dyadic rectangles on a fixed rectangle $R$, any function on the eccentricity moves out of the sum and therefore this kind of functional also satisfies Lemma \ref{lem:p^*}. Again, the result follows from Theorem \ref{thm:PoSo-Aq-diam-delta}.
\end{proof}

Next we present here the proof of Theorem \ref{thm:delta-StartingPoint}. Recall that the idea is that even only being able to start from a weaker starting point \eqref{eq:delta-StartingPoint}, in the end we arrive at the same place, namely to the usual $L^1$ oscillation initial point. 
 
 \begin{proof}[Proof of Theorem \ref{thm:delta-StartingPoint}]
Note that proving \eqref{eq:L^1-goal} is the same as proving that the following quantity
\begin{equation}\label{DefX}
X:=\sup_{R\in \cR}\inf_{c\in \mathbb R}\avgint_R \frac{|f(x)-c|}{a(R)} dx 
\end{equation}
is finite. There are two reductions that make this trivial: the function $f$ being bounded and the functional $a$ being uniformly away from zero. Both can be formalized in a standard way. For the functional $a$ it suffices to consider the perturbed functional $a_\varepsilon:=a+\varepsilon$ for $\varepsilon>0$ since this does not affect the properties assumed on $a$. This observation may (and should) be compared to  the remark made prior to the proof of Theorem \ref{thm:AutomejorastrongcR}. The absence of weights makes these reductions much easier.

The argument below shows how to truncate the function $f$. Therefore, we can always assume that the supremum defining $X$ is finite.  Let us start then by observing that we may assume that the function $f$ is bounded. More precisely, suppose that \eqref{eq:delta-StartingPoint} holds and consider $m\in \mathbb{N}$. Then the truncation $f_m:=\min\{|f|; m\}$ also verifies \eqref{eq:delta-StartingPoint}. To see why, consider $\lambda \in \mathbb{R}$ and also truncate it by introducing $\lambda_m:=\min\{|\lambda|;m\}$. We can compute directly both minima  and obtain:
 \begin{eqnarray*}
 f_m-\lambda_m & = & \min\{|f|; m\} - \min\{|\lambda|; m\}\\
& = & \frac{1}{2}\left (|f|+ m -||f|- m| - \left (|\lambda|+m-||\lambda|-m|\right) \right )\\
& = & \frac{1}{2}\left (|f| -|\lambda| - ||\lambda|-m|-||f|-m| \right ).
 \end{eqnarray*}
 Hence, taking absolute values, 
  \begin{eqnarray*}
| f_m-\lambda_m| & \le &  \frac{1}{2}\left (||f| -|\lambda|| + |||\lambda|-m|-||f|-m| |\right )\\
& \le & \frac{1}{2}\left (|f -\lambda| + ||\lambda|-|f| |\right )\\
& \le & \frac{1}{2}\left (|f -\lambda| + |f-\lambda|\right )\\
& \le & |f -\lambda| 
 \end{eqnarray*}
 for all $m\in \mathbb{N}$ and for all $\lambda>0$. This pointwise estimate yields the following
 \begin{equation*}
 \inf_{c\in \mathbb R}\left (\avgint_R |f_m(x)-c|^\delta\right )^{\frac{1}{\delta}}\le \left (\avgint_R |f_m(x)-\lambda_m|^\delta\right )^{\frac{1}{\delta}}\le \left (\avgint_R |f(x)-\lambda|^\delta\right )^{\frac{1}{\delta}},
 \end{equation*}
 also for any $\lambda>0$. Therefore we obtain that
 \begin{equation*}
  \inf_{c\in \mathbb R}\left (\avgint_R |f_m(x)-c|^\delta\right )^{\frac{1}{\delta}}\le \inf_{\lambda\in \mathbb R}\left (\avgint_R |f(x)-\lambda|^\delta\right )^{\frac{1}{\delta}}\le a(R).
 \end{equation*}
 This means that the same initial hypothesis also holds for any truncation of $f$ at height $m$, independently of $m$.

Now we start with the proof of inequality \eqref{eq:L1-starting} assuming that the function $f$ is bounded. Given a rectangle $R\in \cR$, we have by hypothesis that, for some $c_R\in \mathbb R$, 
\begin{equation*}
\left (\avgint_R \left |\frac{f(x)-c_R}{a(R)}\right |^\delta dx\right )^{\frac{1}{\delta}}\le 2.
\end{equation*}
We perform the standard dyadic C-Z decomposition of the function $\left |\frac{f-c_R}{a(R)}\right |^\delta $  adapted to $R$ at level $L>2$ to be chosen later. 
This means that we will have a collection $\{R_i\}_i$ of disjoint maximal dyadic rectangles satisfying 
\begin{equation}\label{eq:CZ-rectangles}
L< \left (\avgint_{R_i} \left |\frac{f(x)-c_R}{a(R)}\right |^\delta dx\right )^{\frac{1}{\delta}}\le 2^{\frac{n}{\delta}}L.
\end{equation}

Lets denote by $\Omega_L$ the union of such rectangles. As usual, this set is the level set of the dyadic maximal function relative to $R$:
\begin{equation}\label{eq:OmegaL-level-set}
\Omega_L=\left \{x\in R: M^d_R\left (\left |\frac{f-c_R}{a(R)}\right |^\delta \right )(x)>L^\delta\right \},
\end{equation}
where 
\begin{equation}\label{eq:MaximalDyadic}
M^d_R(g)(x)=\sup_{x\in P\in \mathcal{D}(R)}\avgint_P |g(y)|dy.
\end{equation}

Now, consider the following pointwise decomposition.
\begin{eqnarray*}
\left |\frac{f(x)-c_R}{a(R)}\right |^\delta & = & \left |\frac{f(x)-c_R}{a(R)}\right |^\delta\chi_{\Omega}(x) + \left |\frac{f(x)-c_R}{a(R)}\right |^\delta\chi_{\Omega^c}(x) \\
& \le &  \sum_i \left |\frac{f(x)-c_R}{a(R)}\right |^\delta\chi_{R_i}(x) + L^\delta
\end{eqnarray*}
since the Lebesgue differentiation theorem implies that outside $\Omega$ the function $\frac{f-c_R}{a(R)}$ is pointwise bounded by $L$. The usual argument at this stage would be to find a way to intercalate some average-like quantity related to $R_i$ instead of the original object $R$. We proceed as follows in the sum above. Denote $g_R=\frac{f-c_R}{a(R)}$, then

\begin{eqnarray*}
\sum_i |g_R(x)|^\delta\chi_{R_i}(x) &  =  & \sum_i \left ( |g_R(x)|^\delta-\avgint_{R_i}|g_R(y)|^\delta dy\right )\chi_{R_i}(x) \\
& & + \sum_i \avgint_{R_i}|g_R(y)|^\delta\chi_{R_i}(x) \\
& \le &  \sum_i \left ( \avgint_{R_i}\left |\frac{f(x)-f(y)}{a(R)}\right |^\delta dy\right )\chi_{R_i}(x) + 2^nL^\delta,
\end{eqnarray*}
by using the elementary bound $||a|^\delta-|b|^\delta|\le |a-b|^\delta$ for any $a,b\in\mathbb{R}$ and $0<\delta<1$. The second term is obtained using the maximality from \eqref{eq:CZ-rectangles}. Now, by convexity the $\delta$ power can be moved out from the integral and using that the rectangles are disjoint we obtain
\begin{equation*}
\sum_i |g_R(x)|^\delta\chi_{R_i}(x) \le  \left (\sum_i  \avgint_{R_i}\left |\frac{f(x)-f(y)}{a(R)}\right |dy\chi_{R_i}(x)\right )^\delta  + 2^nL^\delta.
\end{equation*}
Therefore, collecting all estimates, we get
\begin{equation*}
\left |\frac{f(x)-c_R}{a(R)}\right |^\delta\le \left (\sum_i  \avgint_{R_i}\left |\frac{f(x)-f(y)}{a(R)}\right |dy\chi_{R_i}(x)\right )^\delta  + 2^nL^\delta + L^\delta.
\end{equation*}

Now we can compute the desired $L^1$ norm. 
\begin{eqnarray*}
 \avgint_R\left |\frac{f(x)-c_R}{a(R)}\right |^{\delta\frac{1}{\delta}}& \le  &
\frac{2^{\frac{1}{\delta}-1}}{|R|}\sum_i  |R_i|\avgint_{R_i} \avgint_{R_i}\left |\frac{f(x)-f(y)}{a(R)}\right |dydx+ 2^\frac{n+2-\delta}{\delta}L.\\
\end{eqnarray*}

Now,  since the double average over $R_i$ of $f$ is less or equal than twice the infimum of the oscillations on $R_i$ we obtain, writing $C_\delta=2^\frac{n+2-\delta}{\delta}$
\begin{eqnarray*}
 \avgint_R\left |\frac{f(x)-c_R}{a(R)}\right |dx 
& \le & \frac{2^{\frac{1}{\delta}}}{a(R)|R|} \sum_i  a(R_i)|R_i|\inf_{c\in\mathbb{R}}\avgint_{R_i}\left |\frac{f(x)-c}{a(R_i)}\right |dy+C_\delta L \\
& \le& X \frac{2^{\frac{1}{\delta}}}{a(R)|R|} \sum_i  a(R_i)|R_i|+C_\delta L \\
\end{eqnarray*}
  where  $X$ is defined by \eqref{DefX}. Here there are two options, as it is presented in the statement of the theorem. In the case of the functional $a$ satisfying  the $SD_{1,\cR}^s$ condition for some $s>1$, we use this to obtain
\begin{eqnarray*}
\inf_{c\in \mathbb R} \avgint_R\left |\frac{f(x)-c}{a(R)}\right |dx  & \le &  \avgint_R\left |\frac{f(x)-c_R}{a(R)}\right |dx \\
& \le & 2^{\frac{1}{\delta}} X \frac{\|a\|_{SD_{1,\cR}^s } }{L^{1/s}}+2^\frac{n+2-\delta}{\delta}L.
\end{eqnarray*}
Therefore, taking the supremum over all rectangles $R$ on the left hand side we obtain
\begin{equation*}
X \le 2^{\frac{1}{\delta}} X \frac{\|a\|_{SD_{1,\cR}^s}}{L^{1/s}}+2^\frac{n+2-\delta}{\delta}L.
\end{equation*}

Now we proceed essentially as in \eqref{eq:isolating-X}. 
Choose $L$ large enough, namely $L=e\max\{\max\{2^{\frac{s}{\delta}}\|a\|_{SD_{1,\cR}^s}^s,1\}$ to note that  
$$
X\leq \frac{1}{e^{\frac1s}}X +2^\frac{n+2-\delta}{\delta}e \max\{2^{\frac{s}{\delta}}\|a\|_{SD_{1,\cR}^s}^s,1\}.
$$ 
As in \eqref{eq:isolating-X}, we use that  $(e^{\frac1s})'\le 1+s$ to conclude that 
\begin{equation*}
X\lesssim_{\delta,s}  \max\{ \|a\|^s_{SD_{1,\cR}^s},1 \} ,
\end{equation*}
which is the desired estimate.

The other possibility is that the functional $a$ satisfies a standard $D_p$ condition for $1<p<\infty$. In that case, using H\"older's inequality in the sum involving the $R_i$'s, we obtain \footnote{ Actually, we are using that $D_p \subset SD_{1,\cR}^s$  with  $  \|a\|_{SD_{1,\cR}^s } \leq   \frac{\|a\|_{ D_{p,\cR}   }   }{L^{ \frac{1}{p'}    }}$}
\begin{eqnarray*}
X & \le &  2^{\frac{1}{\delta}}X \frac{1}{a(R)|R|} \sum_i  a(R_i)|R_i|^{\frac{1}{p}+\frac{1}{p'}}+2^\frac{n+2-\delta}{\delta}L\\
& \le & 2^{\frac{1}{\delta}}X \left (\frac{\sum_i  a(R_i)^p|R_i|}{a(R)^p|R|} \right )^{1/p}\left (\frac{\sum_i  |R_i|}{|R|}\right )^{\frac{1}{p'}}+2^\frac{n+2-\delta}{\delta}L\\
& \le & 2^{\frac{1}{\delta}}X  \frac{\|a\|_{ D_{p,\cR}   }   }{L^{ \frac{1}{p'}    }} +2^\frac{n+2-\delta}{\delta}L.
\end{eqnarray*}
Once again, we are in the same situation as before and we can argue in the same way: choosing carefully the size of $L$, we obtain again an inequality of the form
\begin{equation*}
  X\lesssim_{\delta, p} \|a\|_{D_{p,\cR} }^{p'}.
\end{equation*}
 \end{proof}

\section{Analysis in \texorpdfstring{$\ccR$}{ccr}: Bi-paramater (1,1)-\texorpdfstring{Poincar\'e}{Poincare}} \label{sec:Prelim-ccR}

We start recalling the following $(1,1)$-Poincaré inequality 
\begin{equation}\label{eq:suma}
\avgint_{R} |f - f_{R}|\leq \ell(I_1)\avgint_{R} |\nabla_1 f| + \ell(I_2)\avgint_{R} |\nabla_2 f| \qquad R\in \ccR
\end{equation}
which is the statement of Lemma \ref{lem:(1,1)PI-ccR}. As already mentioned the idea  of considering this type of Poincaré inequality follows from the work of Shi and Torchinsky \cite{ShiTor}. 

We will prove inequality \eqref{eq:suma} in two ways, the first one using a point-wise estimate obtained by Lu and Wheeden in \cite{LuWheeden} and the second proof will follow as a consequence of a ``fractional" Poincaré type inequalities that we will show in Proposition \ref{pro:1-1FPIproductSpaces} and improve in Theorem \ref{thm:BBMbiparametrico}.

Recall that for a function $f$ defined on $\mathbb{R}^n=\mathbb{R}^{n_1}\times \mathbb{R}^{n_2}$, $\nabla_1f$ will denote the partial gradient of $f$ containing the $x_1$-derivatives and 
similarly for $\nabla_2f$ the partial gradient of $f$ containing the $x_2$-derivatives. From now on, $I_1$ will always denote a cube in $\mathbb{R}^{n_1}$ while $I_2$ will be a cube in $\mathbb{R}^{n_2}$. Let $\ell_1:=\ell(I_1)$ and $\ell_2:=\ell(I_2)$.

\begin{proposition}\label{pro:LW} \cite{LuWheeden}
Let $R\in \ccR$ be of the form $R=I_1\times I_2$.  If $f\in \text{Lip}(R),$ we have the following pointwise estimate
\begin{equation*}
|f(x_1,x_2) - f_{R}| \lesssim \avgint_{R} \frac{\ell_1|\nabla_1f(y_1,y_2)|+\ell_2|\nabla_2f(y_1,y_2)|}{\left (\frac{|x_1-y_1|^{2}}{\ell_1^2}+\frac{|x_2-y_2|^{2}}{\ell_2^2}\right )^{\frac{n-1}{2}}} \ dy_1 dy_2.
\end{equation*}
\end{proposition}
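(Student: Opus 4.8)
The plan is to reduce this biparametric pointwise estimate to the classical one-parameter pointwise Poincar\'e inequality on a cube by means of an anisotropic rescaling. Recall first the classical Morrey-type representation: for any Lipschitz function $g$ on the unit cube $Q_0:=[0,1]^{n}\subset\R^{n}$,
\begin{equation*}
|g(z)-g_{Q_0}|\le c_n\int_{Q_0}\frac{|\nabla g(\zeta)|}{|z-\zeta|^{n-1}}\,d\zeta,\qquad z\in Q_0,
\end{equation*}
which follows by writing $g(z)-g(\zeta)$ as the line integral of $\nabla g$ along the segment $[z,\zeta]$, averaging in $\zeta\in Q_0$, passing to polar coordinates centered at $z$ and extending $\nabla g$ by $0$ outside $Q_0$ (see e.g. \cite{KLV}); since $|Q_0|=1$ the right-hand side equals $c_n\avgint_{Q_0}|z-\zeta|^{1-n}|\nabla g(\zeta)|\,d\zeta$.

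First I would set up the change of variables. Write $R=I_1\times I_2$ with $I_1\subset\R^{n_1}$ a cube of sidelength $\ell_1$ and $I_2\subset\R^{n_2}$ a cube of sidelength $\ell_2$, and let $T\colon Q_0\to R$ be the affine bijection $T(z_1,z_2)=(a_1+\ell_1 z_1,\,a_2+\ell_2 z_2)$, where $a_i$ is the lower corner of $I_i$; thus $T$ carries $[0,1]^{n_1}\times[0,1]^{n_2}$ onto $R$ with Jacobian $|R|=\ell_1^{n_1}\ell_2^{n_2}$. Put $g:=f\circ T$, so $g\in\text{Lip}(Q_0)$ and $g_{Q_0}=f_R$. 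The chain rule gives $\nabla_1 g=\ell_1\,(\nabla_1 f)\circ T$ and $\nabla_2 g=\ell_2\,(\nabla_2 f)\circ T$, whence
\begin{equation*}
|\nabla g(\zeta)|=\Big(\ell_1^{2}|(\nabla_1 f)(T\zeta)|^{2}+\ell_2^{2}|(\nabla_2 f)(T\zeta)|^{2}\Big)^{1/2}\le \ell_1|(\nabla_1 f)(T\zeta)|+\ell_2|(\nabla_2 f)(T\zeta)|,
\end{equation*}
and, writing $z=T^{-1}x$, $\zeta=T^{-1}y$ with $x=(x_1,x_2)$ and $y=(y_1,y_2)$ in $R$,
\begin{equation*}
|z-\zeta|^{2}=\frac{|x_1-y_1|^{2}}{\ell_1^{2}}+\frac{|x_2-y_2|^{2}}{\ell_2^{2}}.
\end{equation*}

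Next I would apply the classical estimate to $g$ at the point $z=T^{-1}x$ and change variables $\zeta=T^{-1}y$, so that $d\zeta=|R|^{-1}\,dy$. The left-hand side becomes $|f(x_1,x_2)-f_R|$, and $c_n\int_{Q_0}|z-\zeta|^{1-n}|\nabla g(\zeta)|\,d\zeta$ becomes exactly
\begin{equation*}
c_n\,\avgint_{R}\frac{\ell_1|\nabla_1 f(y_1,y_2)|+\ell_2|\nabla_2 f(y_1,y_2)|}{\Big(\dfrac{|x_1-y_1|^{2}}{\ell_1^{2}}+\dfrac{|x_2-y_2|^{2}}{\ell_2^{2}}\Big)^{\frac{n-1}{2}}}\,dy_1\,dy_2,
\end{equation*}
which is the asserted inequality (with implicit constant $c_n$).

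The main obstacle is not conceptual: it is just the bookkeeping of the anisotropic dilation. It is precisely the non-uniform scaling (factor $\ell_1$ on the first block of variables, $\ell_2$ on the second) that manufactures the mixed kernel $\big(|x_1-y_1|^{2}/\ell_1^{2}+|x_2-y_2|^{2}/\ell_2^{2}\big)^{-(n-1)/2}$ out of $|z-\zeta|^{1-n}$, and one has to keep track of the Jacobian so that the normalization $\avgint_R$ comes out correctly. If instead one wants a proof internal to the product structure (in the spirit of \cite{LuWheeden}), one can split $f(x_1,x_2)-f(y_1,y_2)=[f(x_1,x_2)-f(y_1,x_2)]+[f(y_1,x_2)-f(y_1,y_2)]$, bound the first difference by a segment integral of $\nabla_1 f$ in the $x_1$-variables and the second by a segment integral of $\nabla_2 f$ in the $x_2$-variables, and then average over $y\in R$; this route also works but is more delicate, since after averaging one must reassemble the full symmetric anisotropic kernel, so I would favour the rescaling argument above.
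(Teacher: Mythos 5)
Your argument is correct, and it is worth noting that the paper itself offers no proof of this proposition: it is quoted from Lu--Wheeden \cite{LuWheeden}, where it is obtained from a representation formula built intrinsically on the product structure (in the more general setting of vector fields). Your route — apply the classical convex-domain representation $|g(z)-g_{Q_0}|\le c_n\int_{Q_0}|\nabla g(\zeta)|\,|z-\zeta|^{1-n}\,d\zeta$ on the unit cube and pull it back under the anisotropic affine map $T(z_1,z_2)=(a_1+\ell_1 z_1,a_2+\ell_2 z_2)$ — is a clean, self-contained derivation; the bookkeeping you record (chain rule giving the factors $\ell_1,\ell_2$ on the partial gradients, $|z-\zeta|^2=|x_1-y_1|^2/\ell_1^2+|x_2-y_2|^2/\ell_2^2$, Jacobian $|R|=\ell_1^{n_1}\ell_2^{n_2}$ turning $\int_{Q_0}d\zeta$ into $\avgint_R dy$) is exactly what is needed and checks out, with $g_{Q_0}=f_R$ for free. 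What the rescaling buys is brevity and an explicit dimensional constant inherited from the convex-domain formula; what the intrinsic product-space route (your alternative splitting through the intermediate point $(y_1,x_2)$) buys is that it survives in settings where no global anisotropic dilation is available, and it is in fact the decomposition the paper uses elsewhere (Proposition \ref{pro:1-1FPIproductSpaces} and the second proof of Lemma \ref{lem:(1,1)PI-ccR}), which is why the authors can bypass Proposition \ref{pro:LW} entirely. For the purposes of this paper your rescaling proof is perfectly adequate and arguably preferable to an external citation.
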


From this we provide the first proof of the $(1,1)$-Poincar\'e inequality \eqref{eq:suma}.

\begin{proof} [First proof of Lemma \ref{lem:(1,1)PI-ccR}]
Let's put $A:=\displaystyle\int_{R}|f(x_1,x_2)- f_{R}| \, dx_1 dx_2$. Then, using the pointwise estimate from Proposition \ref{pro:LW} we directly obtain
\begin{eqnarray*}
A
&\le &
\int_{R}\avgint_{R} \frac{\ell_1|\nabla_1f(y_1,y_2)|+\ell_2|\nabla_2f(y_1,y_2)|}{\left (\frac{|x_1-y_1|^{2}}{\ell_1^2}+\frac{|x_2-y_2|^{2}}{\ell_2^2}\right )^{\frac{n-1}{2}}} \ dy_1 dy_2 dx_1 dx_2\\
&\le & \int_{R}\frac{\ell_1|\nabla_1f(y_1,y_2)|}{|I_1||I_2|}\int_{R} \frac{1}{\left (\frac{|x_1-y_1|^{2}}{\ell_1^2}+\frac{|x_2-y_2|^{2}}{\ell_2^2}\right )^{\frac{n-1}{2}}} \ dx_1 dx_2 dy_1 dy_2\\
&&+ \int_{R}\frac{\ell_2|\nabla_2f(y_1,y_2)|}{|I_1||I_2|}\int_{R} \frac{1}{\left (\frac{|x_1-y_1|^{2}}{\ell_1^2}+\frac{|x_2-y_2|^{2}}{\ell_2^2}\right )^{\frac{n-1}{2}}} \ dx_1 dx_2 dy_1 dy_2\\
&\le & \ell_1\int_{R} |\nabla_1 f(y_1,y_2)| \ dy_1dy_2+ \ell_2\int_{R} |\nabla_2 f(y_1,y_2)| \ dy_1dy_2.
\end{eqnarray*}
In the above argument we have use the following inequality
\begin{eqnarray*}
\avgint_{R} \frac{ dx_1 dx_2}{\left (\frac{|x_1-y_1|^{2}}{\ell_1^2}+\frac{|x_2-y_2|^{2}}{\ell_2^2}\right )^{\frac{n-1}{2}}} 
&= & \avgint_{R} \frac{ dx_1 dx_2}{\left|(\frac{x_1}{\ell_1},\frac{x_2}{\ell_2})- (\frac{y_1}{\ell_1},\frac{y_2}{\ell_2})\right|^{\frac{n-1}{2}}} \\
&= &\frac{1}{|I_1||I_2|}\int_{\frac{I_1}{\ell_1}\times \frac{I_2}{\ell_2}} \frac{\ell_1^{n_1} \ell_2^{n_2}\ dz_1 dz_2}{\left|(z_1,z_2)- (\frac{y_1}{\ell_1},\frac{y_2}{\ell_2})\right|^{\frac{n-1}{2}}} \\
&\le &\left|\frac{I_1}{\ell_1}\times \frac{I_2}{\ell_2}\right|=1.
\end{eqnarray*}
\end{proof}

We can improve this result using fractional  type Poincaré inequalities. The idea is to ``interpolate'' between the oscillation
$$ 
\avgint_{I_1\times I_2} |f - f_{I_1\times I_2}|
$$
and the right hand side of \eqref{eq:suma} inspired by the following one parameter $(1,1)$ fractional  type Poincaré inequality which is easy to derive:
\begin{equation}\label{eq:FracPI}
\avgint_Q |f(t)-f_Q|dt \leq c_n\,\ell(Q)^{\delta}
 \avgint_Q \int_Q \frac{|f(t)-f(s)|}{|t-s|^{n+\delta }}\,dt\, ds.
\end{equation}

The result is the following, 

\begin{proposition}\label{pro:1-1FPIproductSpaces}
Let $R=I_1\times I_2$ be a rectangle and $\delta_1,\delta_2 \in (0,1)$. Then there exist dimensional constants $c_{n_1},c_{n_2}>0$ such that 
\begin{eqnarray}\label{eq:sumaFract}
\qquad \avgint_{R} |f - f_{R}|
& \leq & c_{n_1}
\ell(I_1)^{\delta_1} \avgint_{R}  \int_{I_1} \frac{|f(x_1, x_2) - f(y_1, x_2) |}{|x_1-y_1|^{n_1+\delta_1}}\ dx_1 dy_1 dx_2  \\
&& + \ c_{n_2} \ell(I_2)^{\delta_2}\avgint_{R}  \int_{I_2} \frac{|f(y_1, x_2) - f(y_1, y_2) |}{|x_2-y_2|^{n_2+\delta_2}}\ dy_1 dx_2 dy_2. \nonumber 
\end{eqnarray}
\end{proposition}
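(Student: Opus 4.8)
The plan is to imitate the one-parameter fractional Poincar\'e inequality \eqref{eq:FracPI}, applying it successively in each block of variables and interpolating through the \emph{partial average} in the first group of variables. For $x_2\in I_2$ set
\[
f_{I_1}(x_2):=\avgint_{I_1} f(y_1,x_2)\,dy_1,
\]
and observe that $f_R=\avgint_{I_2} f_{I_1}(x_2)\,dx_2$, i.e.\ $f_R$ is precisely the $I_2$-average of the function $x_2\mapsto f_{I_1}(x_2)$. Then I would split
\[
f(x_1,x_2)-f_R=\big(f(x_1,x_2)-f_{I_1}(x_2)\big)+\big(f_{I_1}(x_2)-f_R\big),
\]
and, since the second summand is independent of $x_1$, integrate to obtain
\[
\avgint_R |f-f_R|\le \avgint_R |f(x_1,x_2)-f_{I_1}(x_2)|\,dx_1\,dx_2+\avgint_{I_2}|f_{I_1}(x_2)-f_R|\,dx_2.
\]

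First I would bound the term $\avgint_R|f(x_1,x_2)-f_{I_1}(x_2)|\,dx_1\,dx_2$. For a.e.\ fixed $x_2$, applying \eqref{eq:FracPI} with $Q=I_1$ to the slice $y_1\mapsto f(y_1,x_2)$ gives
\[
\avgint_{I_1}|f(x_1,x_2)-f_{I_1}(x_2)|\,dx_1\le c_{n_1}\,\ell(I_1)^{\delta_1}\avgint_{I_1}\int_{I_1}\frac{|f(x_1,x_2)-f(y_1,x_2)|}{|x_1-y_1|^{n_1+\delta_1}}\,dx_1\,dy_1,
\]
and averaging in $x_2\in I_2$ and rearranging by Fubini yields exactly the first functional on the right-hand side of \eqref{eq:sumaFract}. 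For the second term I would apply \eqref{eq:FracPI} with $Q=I_2$ to the function $x_2\mapsto f_{I_1}(x_2)$ (which is at least as regular as $f$, in particular Lipschitz when $f\in\operatorname{Lip}(R)$), obtaining
\[
\avgint_{I_2}|f_{I_1}(x_2)-f_R|\,dx_2\le c_{n_2}\,\ell(I_2)^{\delta_2}\avgint_{I_2}\int_{I_2}\frac{|f_{I_1}(x_2)-f_{I_1}(y_2)|}{|x_2-y_2|^{n_2+\delta_2}}\,dx_2\,dy_2,
\]
and then using the triangle/Jensen inequality $|f_{I_1}(x_2)-f_{I_1}(y_2)|\le \avgint_{I_1}|f(y_1,x_2)-f(y_1,y_2)|\,dy_1$ and Fubini again produces the second functional in \eqref{eq:sumaFract}. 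Adding the two estimates finishes the proof.

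There is no serious obstacle here: the whole content is the choice of the intermediate average $f_{I_1}(x_2)$ together with the remark that $f_R$ is its $I_2$-average, after which the one-parameter inequality \eqref{eq:FracPI} does all the work slice by slice. The only points that need a little care are (i) verifying that the slices $y_1\mapsto f(y_1,x_2)$ and the function $x_2\mapsto f_{I_1}(x_2)$ are regular enough for \eqref{eq:FracPI} to apply --- immediate for $f\in\operatorname{Lip}(R)$, and otherwise obtained by the usual truncation/density argument --- and (ii) bookkeeping the normalizing factors $|I_1|^{-1},|I_2|^{-1}$ so that the nested averages $\avgint_{I_2}\avgint_{I_1}\int_{I_1}$ and $\avgint_{I_1}\avgint_{I_2}\int_{I_2}$ coincide with the $\avgint_R\int_{I_1}$ and $\avgint_R\int_{I_2}$ written in the statement, which is a routine Fubini computation. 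The asymmetry between the two functionals in \eqref{eq:sumaFract} (one involving $f(x_1,x_2)-f(y_1,x_2)$ with the remaining $x_2$-average, the other involving $f(y_1,x_2)-f(y_1,y_2)$ with the remaining $y_1$-average) is exactly the footprint of this two-step procedure, and matches the later refinement in Theorem \ref{thm:BBMbiparametrico}.
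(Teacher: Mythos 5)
Your argument is correct and is essentially the paper's proof in a mildly different dress: the paper writes the oscillation as the double average $\avgint_R\avgint_R|f(x_1,x_2)-f(y_1,y_2)|$ and chains through the mixed point $(y_1,x_2)$, then inserts the kernel via $|x_1-y_1|^{\delta_1}\le c\,\ell(I_1)^{\delta_1}$ and $|I_1|^{-1}\le c_{n_1}|x_1-y_1|^{-n_1}$, whereas you chain through the partial average $f_{I_1}(x_2)$ and invoke the one-parameter inequality \eqref{eq:FracPI} slice by slice plus Jensen. Both decompositions produce exactly the two functionals in \eqref{eq:sumaFract}, and your remark about regularity is not needed since \eqref{eq:FracPI} holds for any locally integrable function.
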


\begin{proof}

By the triangle inequality 
\begin{eqnarray*}
\avgint_{R} |f - f_{R}|  & \approx &
\avgint_{R}\avgint_{R} |f(x_1,x_2)- f(y_1,y_2)| \, dy_1 dy_2\ dx_1 dx_2\\
& \leq &\avgint_{R}\avgint_{R} |f(x_1,x_2)- f(y_1,x_2)| \, dy_1 dy_2\ dx_1 dx_2\\
&& + \
\avgint_{R}\avgint_{R} |f(y_1,x_2)- f(y_1,y_2)| \, dy_1 dy_2\ dx_1 dx_2\\
&=&\avgint_{R}\avgint_{I_1} |f(x_1,x_2)- f(y_1,x_2)| \, dy_1 \ dx_1 dx_2\\
&&
+ \
\avgint_{R}\avgint_{ I_2} |f(y_1,x_2)- f(y_1,y_2)| \, dy_1 dy_2\  dx_2=A+B.
\end{eqnarray*}

We compute $A$ to derive the first term of \eqref{eq:sumaFract}. A similar estimate holds for $B$ to derive the second term of \eqref{eq:sumaFract}
\begin{eqnarray*}
A & = &  \avgint_{R}\avgint_{ I_1}    \frac{|f(x_1,x_2)- f(y_1,x_2)| |x_1-y_1|^{\delta_1} }{|x_1-y_1|^{\delta_1}}
\, dy_1 dx_1\  dx_2\\
& \leq & \ell(I_1)^{\delta_1} \avgint_{R}\avgint_{ I_1}    \frac{|f(x_1,x_2)- f(y_1,x_2)| }{|x_1-y_1|^{\delta_1}}
\, dy_1 dx_1\  dx_2 \\
& \leq & c_{n_1} \, \ell(I_1)^{\delta_1} \avgint_{R} \int_{I_1}   \frac{|f(x_1,x_2)- f(y_1,x_2)|  }{|x_1-y_1|^{n_1+\delta_1}}
\, dy_1 dx_1\  dx_2. 
\end{eqnarray*}

\end{proof}

We remark that the fractional functional right hand side of \eqref{eq:sumaFract}  is smaller than the gradient functional from \eqref{eq:suma}, so  we could have avoided the use of  Lu-Wheeden pointwise estimate from Proposition \ref{pro:LW} and in addition we are getting a better estimate than we were looking for.
Indeed, to verify   last assertion we use the one-parameter fractional estimate from the following lemma.
\begin{lemma}\label{lem:oneparameterFractNabla} %
There exists a dimensional constant $c_n>0$ such that for any \, $\delta\in(0,1)$, and any cube $Q$ in $\mathbb{R}^{n}$
\begin{equation}\label{eq:roughfractionalPI}
\ell(Q)^{\delta}  \avgint_Q \int_Q\frac{|f(t)-f(s)|}{|t-s|^{n+\delta  }}\,dt\,ds  \leq \frac{c_n}{\delta(1-\delta) }\ell(Q)\avgint_Q |\nabla f| \,dx.
\end{equation}
\end{lemma}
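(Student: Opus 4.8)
The plan is to run the classical Sobolev-potential estimate on the convex set $Q$, being careful to extract the two scaling factors $\frac1\delta$ and $\frac1{1-\delta}$ separately. By a density argument it suffices to prove \eqref{eq:roughfractionalPI} for $f$ Lipschitz on $Q$; the general case follows since the right-hand side is finite whenever $\nabla f\in L^1(Q)$. First I would use that $Q$ is convex: for $t,s\in Q$ the segment $[t,s]$ lies in $Q$, and writing $\omega=\frac{s-t}{|s-t|}$ the fundamental theorem of calculus gives the pointwise bound
\[
|f(t)-f(s)|\le \int_0^{|t-s|}|\nabla f(t+r\omega)|\,dr ,
\]
with $t+r\omega\in Q$ for all $0\le r\le|t-s|$.

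\textbf{Main computation.} Next, for fixed $t\in Q$ I would pass to polar coordinates $s=t+\rho\omega$, so $ds=\rho^{n-1}d\rho\,d\omega$ and $|t-s|=\rho$. Since $\operatorname{diam}(Q)\le\sqrt n\,\ell(Q)$, the inner $s$-integral is supported in $\rho\in(0,\sqrt n\,\ell(Q))$, and Fubini in $(r,\rho)$ (for fixed $r$, $\rho$ ranges over $(r,\sqrt n\,\ell(Q))$) together with $\int_r^\infty\rho^{-1-\delta}\,d\rho=\frac{r^{-\delta}}{\delta}$ gives
\[
\int_Q\frac{|f(t)-f(s)|}{|t-s|^{n+\delta}}\,ds\le\frac1\delta\int_{S^{n-1}}\int_0^{\sqrt n\,\ell(Q)}|\nabla f(t+r\omega)|\,\chi_Q(t+r\omega)\,r^{-\delta}\,dr\,d\omega .
\]
Undoing the polar change via $y=t+r\omega$, $dy=r^{n-1}dr\,d\omega$, the right side equals $\frac1\delta\int_Q\frac{|\nabla f(y)|}{|y-t|^{n-1+\delta}}\,dy$. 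Integrating in $t$ and swapping order, I would then bound the inner integral $\int_Q\frac{dt}{|y-t|^{n-1+\delta}}$ by integrating over the ball $B(y,\sqrt n\,\ell(Q))$, which in polar coordinates is $\omega_{n-1}\int_0^{\sqrt n\,\ell(Q)}\rho^{-\delta}\,d\rho=\frac{\omega_{n-1}(\sqrt n\,\ell(Q))^{1-\delta}}{1-\delta}$ (here $\omega_{n-1}=|S^{n-1}|$). Collecting,
\[
\int_Q\int_Q\frac{|f(t)-f(s)|}{|t-s|^{n+\delta}}\,dt\,ds\le\frac{\omega_{n-1}(\sqrt n)^{1-\delta}}{\delta(1-\delta)}\,\ell(Q)^{1-\delta}\int_Q|\nabla f| ,
\]
and dividing by $|Q|$, multiplying by $\ell(Q)^\delta$, and using $(\sqrt n)^{1-\delta}\le\sqrt n$ yields \eqref{eq:roughfractionalPI} with $c_n=\sqrt n\,\omega_{n-1}$.

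\textbf{Expected obstacle.} The argument is essentially bookkeeping, so there is no deep obstruction; the one delicate point is the ordering of the two "kernel integrations" so that the factor $\frac1\delta$ comes cleanly from the tail $\rho\sim r$ of the $\rho$-integral and the factor $\frac1{1-\delta}$ from the Riesz-type singularity $t\sim y$, always cutting off at the single scale $\sqrt n\,\ell(Q)$. Being careless here (e.g. absorbing one singularity into the other, or using a crude bound $|t-s|^{-\delta}\le$ const before integrating) destroys the sharp $\frac1{\delta(1-\delta)}$ dependence, which is precisely what is needed downstream to produce the Bourgain--Brezis--Mironescu gain $(1-\delta)^{1/p}$ in the applications.
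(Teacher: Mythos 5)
Your proof is correct and follows essentially the same route as the paper's: a fundamental-theorem-of-calculus representation along segments (using convexity of $Q$), followed by Fubini/change of variables so that the factor $\frac1\delta$ comes from the integral over the line parameter and the factor $\frac1{1-\delta}$ from the resulting Riesz-potential kernel $|y-t|^{-(n-(1-\delta))}$ integrated over $Q$. The only differences are cosmetic (arc-length versus affine parametrization of the segment, and explicit polar coordinates in place of the paper's substitution $z=x+t(y-x)$ and its Riesz-potential lemma).
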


Since we do not find the proof of this in the literature we provide an argument in Appendix \ref{sec:App:oneparameterFractNabla}. 

\begin{proof} [Second proof of Lemma \ref{lem:(1,1)PI-ccR}]

Let $\delta_1=\delta_2 =\frac12 \in(0,1).$ We compare the first term in the fractional functional \eqref{eq:sumaFract}  with the first term of the gradient functional \eqref{eq:suma}. To verify this  we use the one-parameter fractional estimate from \eqref{eq:roughfractionalPI}: 

\begin{eqnarray*}
\ell(I_1)^{\delta_1} \avgint_{I_1\times I_2}  \int_{I_1} \frac{|f(x_1, x_2) - f(y_1, x_2) |}{|x_1-y_1|^{n_1+\delta_1}}\ dx_1 dy_1 dx_2 & = &\\
\\
& &\hspace{-6cm} =\ \ell(I_1)^{\delta_1} \avgint_{I_2}  \avgint_{I_1} \int_{I_1} \frac{|f(x_1, x_2) - f(y_1, x_2) |}{|x_1-y_1|^{n_1+\delta_1}}\ dx_1 dy_1 dx_2 \\
\\
& &\hspace{-6cm} \le \ \frac{c_{n_1}}{\delta_1(1-\delta_1)} \ell(I_1) \avgint_{I_2}  \avgint_{I_1} | \nabla_1 f| \ dx_1  dx_2.
\\
& &\hspace{-6cm} = \ 4c_{n_1} \ell(I_1) \avgint_{I_2}  \avgint_{I_1} | \nabla_1 f| \ dx_1  dx_2. 
\end{eqnarray*}
 A similar estimate holds for the second direction $I_2$ with $\delta_2$.  Then applying  
 Theorem \ref{thm:BBMbiparametrico}  with $p_1=p_2=1$ we are done  or directly from ``rough" Fractional Poincar\'e inequality  
 \eqref{eq:FracPI}.

\end{proof}

To conclude with the proof of the lemma, it remains to prove  Theorem \ref{thm:BBMbiparametrico}.
We recall again that  there is very interesting improvement of \eqref{eq:FracPI} obtained by Bourgain, Brezis and Mironescu  \cite{BBM}
The result is the following.

\begin{theorem} \label{thm:BBM}  Let $\delta \in (0,1)$. Then there exists a dimensional constant $c_n>0$ such that
\begin{equation*}
\avgint_Q |f(x)-f_Q|dt \leq c_n\,  
(1-\delta)^{\frac1p}\,
 \ell(Q)^{\delta} \,
 \left(\avgint_Q \int_Q \frac{|f(x)-f(y)|^p}{|x-y|^{n+\delta p}}\,dy\, dx\right)^{1/p}. 
\end{equation*}
for every cube $Q$ in $\R^n$.
\end{theorem}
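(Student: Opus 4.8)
The statement is the classical one-parameter Bourgain--Brezis--Mironescu improvement of the rough fractional Poincar\'e inequality \eqref{eq:FracPI}, so one option is simply to quote it from \cite{BBM} (or, with sharp constants, from Milman \cite{M}; see also \cite{MS, BVY, DD, DM}). To give a self-contained argument the plan is to run the interpolation proof behind it.

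First I would reduce the left-hand side to an $L^{p}$ oscillation: by Jensen's inequality $\avgint_{Q}|f-f_{Q}|\le\bigl(\avgint_{Q}|f-f_{Q}|^{p}\bigr)^{1/p}\le 2\,|Q|^{-1/p}\inf_{c\in\mathbb R}\|f-c\|_{L^{p}(Q)}$, so it suffices to estimate $\inf_{c}\|f-c\|_{L^{p}(Q)}$. Next, realise this quantity as a $K$-functional: for any decomposition $f=g+h$, choosing $c=h_{Q}$ gives $\inf_{c}\|f-c\|_{L^{p}(Q)}\le\|g\|_{L^{p}(Q)}+\|h-h_{Q}\|_{L^{p}(Q)}$, and the classical $(1,p)$-Poincar\'e--Wirtinger inequality on a cube yields $\|h-h_{Q}\|_{L^{p}(Q)}\le c_{n}\,\ell(Q)\,\|\nabla h\|_{L^{p}(Q)}$; hence
\begin{equation*}
\inf_{c}\|f-c\|_{L^{p}(Q)}\;\le\;c_{n}\inf_{f=g+h}\bigl(\|g\|_{L^{p}(Q)}+\ell(Q)\,\|\nabla h\|_{L^{p}(Q)}\bigr)\;=\;c_{n}\,K\bigl(\ell(Q),f;L^{p}(Q),W^{1,p}(Q)\bigr).
\end{equation*}
Choosing $h$ to be a mollification of $f$ at scale $t$ identifies $K(t,f;L^{p},W^{1,p})$, up to dimensional constants, with the $L^{p}$ modulus of smoothness $\omega_{p}(f,t)_{Q}=\sup_{0<|y|\le t}\|f(\cdot+y)-f\|_{L^{p}(Q\cap(Q-y))}$, while passing to polar coordinates in the variable $x-y$ gives the companion representation $\avgint_{Q}\int_{Q}\frac{|f(x)-f(y)|^{p}}{|x-y|^{n+\delta p}}\,dy\,dx\;\approx_{n}\;\frac1{|Q|}\int_{0}^{\ell(Q)}\bigl(t^{-\delta}\omega_{p}(f,t)_{Q}\bigr)^{p}\,\frac{dt}{t}$.

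The core of the proof --- and the step I expect to be the real obstacle --- is to extract the gain $(1-\delta)^{1/p}$ and not merely a $\delta$-independent constant. A single-scale choice of $t$, or a dyadic telescoping across scales, only reproduces the rough estimate $\avgint_{Q}|f-f_{Q}|\lesssim\bigl(1-2^{-\delta}\bigr)^{-1}\ell(Q)^{\delta}\,[f]_{W^{\delta,p}(Q)}$, whose constant stays bounded but does not decay as $\delta\uparrow1$; this is precisely what has to be improved. The decay is recovered by comparing $K(\ell(Q),f)$ with the \emph{whole} scale-integral of $\omega_{p}(f,t)_{Q}$ --- using the monotonicity of $t\mapsto\omega_{p}(f,t)_{Q}$ together with that of $t\mapsto K(t,f)/t$, i.e.\ the sharp interpolation inequality for $W^{\delta,p}(Q)=(L^{p}(Q),W^{1,p}(Q))_{\delta,p}$ --- after which the factor $(1-\delta)$ emerges from the elementary computation $\int_{0}^{\ell(Q)}t^{(1-\delta)p}\,\frac{dt}{t}=\frac{\ell(Q)^{(1-\delta)p}}{(1-\delta)p}$ that measures the $W^{1,p}$-part of the optimal splitting. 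Collecting these estimates over $Q$ yields the theorem; as a bonus, the bi-parameter analogue Theorem \ref{thm:BBMbiparametrico} then follows by applying this one-parameter inequality separately in the $x_{1}$- and $x_{2}$-variables, combined with the triangle inequality $|f(x_{1},x_{2})-f(y_{1},y_{2})|\le|f(x_{1},x_{2})-f(y_{1},x_{2})|+|f(y_{1},x_{2})-f(y_{1},y_{2})|$ and Jensen's inequality.
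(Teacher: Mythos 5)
The paper does not actually prove Theorem \ref{thm:BBM}: it is stated as a known result quoted from \cite{BBM}, with the $(1-\delta)^{1/p}$ gain attributed to Milman's interpolation argument in \cite{M}, which is precisely your first option. Your sketch of that interpolation proof (reduction to the $K$-functional for the couple $(L^p(Q),W^{1,p}(Q))$, its equivalence with the $L^p$ modulus of smoothness, and the gain extracted from the monotonicity of $t\mapsto K(t,f)/t$ integrated against $t^{(1-\delta)p}\,dt/t$) is a faithful account of the cited argument, so the proposal matches the paper's treatment.
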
 

\begin{remark}
We remark that the case $p>1$ does not follow from the case $p=1$ using Jensen's Also recall that there is sharper model version in Theorem \ref{thm:FracSobGain}.
\end{remark}

Our contribution here is the generalization stated in Theorem \ref{thm:BBMbiparametrico}.

\begin{proof}  [Proof of Theorem \ref{thm:BBMbiparametrico}]

The proof follows the same steps as in Proposition \ref{pro:1-1FPIproductSpaces}, but using the more precise estimate from Theorem \ref{thm:BBM}.

Let $R=I_1\times I_2 \in \ccR$. By the triangle inequality 
\begin{align*}
\avgint_{R} |f - f_{R}|  &\approx
\avgint_{R}\avgint_{R} |f(x_1,x_2)- f(y_1,y_2)| \, dy_1 dy_2\ dx_1 dx_2\\
&\leq \avgint_{R}\avgint_{R} |f(x_1,x_2)- f(y_1,x_2)| \, dy_1 dy_2\ dx_1 dx_2\\
&+\avgint_{R}\avgint_{R} |f(y_1,x_2)- f(y_1,y_2)| \, dy_1 dy_2\ dx_1 dx_2\\
&= \avgint_{R}\avgint_{I_1} |f(x_1,x_2)- f(y_1,x_2)| \, dy_1 \ dx_1 dx_2\\
&+\avgint_{R}\avgint_{ I_2} |f(y_1,x_2)- f(y_1,y_2)| \, dy_1 dy_2\  dx_2\\
&:=A+B.
\end{align*}

We compute $A.$ A similar estimate holds for $B.$ Now, from Theorem \ref{thm:BBM} we have, 
\begin{equation*}
\avgint_{ Q }  \avgint_{ Q }|u(x)-u(y)|\, dx\,dy  \leq c_n\,
(1-\delta_1)^{\frac{1}{p_1}} \,   \ell(Q)^{\delta_1} \,
\left(  \avgint_{Q}  \int_{Q}  \frac{|u(x)- u(y)|^{p_1}}{|x-y|^{n+p_1\delta_1}}\,dy\,dx\right)^{\frac{1}{p_1}}   
\end{equation*}
for every cube $Q$ in $\mathbb{R}^n$.   Applying this to the cube $I_1 \subset \mathbb{R}^{n_1}$
\begin{align*}
A&= \avgint_{R}\avgint_{ I_1}  |f(x_1,x_2)- f(y_1,x_2)|\, dy_1 dx_1\  dx_2\\
&=\avgint_{I_2}\avgint_{I_1}\avgint_{ I_1}  |f(x_1,x_2)- f(y_1,x_2)|\, dy_1 dx_1\  dx_2\\
&\le \avgint_{I_2}c_{n_1} \,
(1-\delta_1)^{\frac{1}{p_1}}\ell(I_1)^{\delta_1} \left(\avgint_{I_1} \int_{ I_1}  \frac{|f(x_1,x_2)- f(y_1,x_2)|^{p_1}}{|x_1-y_1|^{n_1+p_1\delta_1}}\, dy_1 dx_1\right)^{\frac{1}{p_1}}  dx_2\\
&
\leq c_{n_1} \,
(1-\delta_1)^{\frac{1}{p_1}}\ell(I_1)^{\delta_1} \left(\avgint_{R}\int_{ I_1}  \frac{|f(x_1,x_2)- f(y_1,x_2)|^{p_1}}{|x_1-y_1|^{n_1+p_1\delta_1}}\, dy_1 dx_1\  dx_2\right)^{\frac{1}{p_1}}\,
\end{align*}
by Jensen's inequality. 
\end{proof}

The same idea could be used to obtain similar results involving $m$-fold products of cubes. Let $R$ be a rectangle of the form $R=\prod_{i=1}^m I_i$ in $\mathbb{R}^n$ written as a product of cubes with $I_i\subset \R^{n_i}$, $n=\sum_{i=1}^m n_i$ and $m\le n$. We fix some notation: the points in $\mathbb{R}^n$ are of the form $\x=(x_1,\dots,x_m)$, where each $x_i$ is itself a string of $n_i$ real numbers forming a vector in $\mathbb{R}^{n_i}$. For any $0\le i\le m$, we denote 
\begin{equation*}
\x_i=\left \{
\begin{array}{cc}
(x_1,\dots,x_m) & i=0\\
(y_1,\dots, y_i,x_{i+1},\dots,x_m) & 1\le i < m\\
(y_1,\dots,y_m) & i=m.
\end{array}
\right.
\end{equation*}
We also adopt the notation $d\x_i:=dy_1\dots dy_i dx_{i}\dots dx_n.$

\begin{theorem}\label{thm:BBM-multi-iparametrico}
Let $R=\prod_{i=1}^m I_i$ be a rectangle in $\mathbb{R}^n$ written as a product of cubes with $I_i\subset \R^{n_i},$ and $n=\sum_{i=1}^m n_i$ and $m\le n$. Let $0<\delta_i<1\leq p_i<\infty$, $1\le i\le m$. Then there are some dimensional constants $c_{n_i}$  such that, 
\begin{equation*}\label{eq:BBM-multi-iparametrico}
\avgint_{R} |f - f_{R}|\le 
\sum_{i=1}^m c_{n_i}  
(1-\delta_i)^{\frac{1}{p_i}} \left( \avgint_{R}  \int_{I_i} \frac{|f(\x_{i-1}) - f(\x_i) |^{p_i}}{|x_i-y_i|^{n_i+p_i\delta_i}}\ dy_1\dots dy_i dx_{i}\dots dx_m\right)^{\frac{1}{p_i}}.
\end{equation*}

\end{theorem}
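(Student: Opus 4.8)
The plan is to repeat the proof of Theorem~\ref{thm:BBMbiparametrico} with $m$ factors in place of two; the only new feature is the bookkeeping of the blocks of variables that survive in each telescoped difference. As in that proof, by the triangle inequality
\begin{equation*}
\avgint_R |f-f_R| \approx \avgint_R \avgint_R |f(\x_0)-f(\x_m)|,
\end{equation*}
where $\x_0=(x_1,\dots,x_m)$ and $\x_m=(y_1,\dots,y_m)$ run independently over $R$. Telescoping along the chain $\x_0,\x_1,\dots,\x_m$, whose consecutive terms $\x_{i-1}$ and $\x_i$ differ only in the $i$-th block of coordinates, gives $f(\x_0)-f(\x_m)=\sum_{i=1}^m(f(\x_{i-1})-f(\x_i))$, so that
\begin{equation*}
\avgint_R |f-f_R| \lesssim \sum_{i=1}^m \avgint_R \avgint_R |f(\x_{i-1})-f(\x_i)| .
\end{equation*}

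Fix $i$. The difference $f(\x_{i-1})-f(\x_i)$ depends only on the $m+1$ variables $y_1,\dots,y_i$ and $x_i,\dots,x_m$; the remaining $m-1$ variables $x_1,\dots,x_{i-1}$ and $y_{i+1},\dots,y_m$ do not appear, so by Tonelli (everything is non-negative) their averages equal $1$ and the $i$-th summand collapses to
\begin{equation*}
\Big(\prod_{j<i}\avgint_{I_j}dy_j\Big)\Big(\prod_{j>i}\avgint_{I_j}dx_j\Big)\,\avgint_{I_i}\avgint_{I_i} |f(\x_{i-1})-f(\x_i)|\,dx_i\,dy_i .
\end{equation*}
For almost every value of the outer variables $u=(y_1,\dots,y_{i-1},x_{i+1},\dots,x_m)$ the slice $t\mapsto f(y_1,\dots,y_{i-1},t,x_{i+1},\dots,x_m)$ is admissible on the cube $I_i\subset\R^{n_i}$, so Theorem~\ref{thm:BBM} with exponents $p_i,\delta_i$ bounds the inner double average by
\begin{equation*}
c_{n_i}(1-\delta_i)^{\frac1{p_i}}\,\ell(I_i)^{\delta_i}\left(\avgint_{I_i}\int_{I_i}\frac{|f(\x_{i-1})-f(\x_i)|^{p_i}}{|x_i-y_i|^{n_i+p_i\delta_i}}\,dy_i\,dx_i\right)^{\frac1{p_i}} .
\end{equation*}
Averaging over $u$ and pulling the exponent $1/p_i$ outside by Jensen's inequality (valid since $p_i\ge 1$ and $t\mapsto t^{1/p_i}$ is concave), the remaining iterated integral is exactly $\avgint_R\int_{I_i}\frac{|f(\x_{i-1})-f(\x_i)|^{p_i}}{|x_i-y_i|^{n_i+p_i\delta_i}}\,dy_1\dots dy_i\,dx_i\dots dx_m$, so the $i$-th summand is dominated by the $i$-th term on the right-hand side (carrying, as in Theorem~\ref{thm:BBMbiparametrico}, the factor $\ell(I_i)^{\delta_i}$). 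Summing over $i$ finishes the argument, with $c_{n_i}$ the constant provided by Theorem~\ref{thm:BBM} on $\R^{n_i}$.

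The argument is a direct transcription of the two-factor case, so there is no genuine analytic difficulty beyond Theorem~\ref{thm:BBM} itself. The point requiring the most care is the combinatorial bookkeeping in the middle step: identifying precisely which of the $2m$ integration variables survive in $f(\x_{i-1})-f(\x_i)$, justifying via Tonelli the collapse of the $m-1$ irrelevant averages and the fiberwise use of Theorem~\ref{thm:BBM}, and checking that after averaging back the normalization reproduces the stated measure $dy_1\dots dy_i\,dx_i\dots dx_m$ paired with $|R|$. The cases $m=2$ (Theorem~\ref{thm:BBMbiparametrico}) and its rough analogue Proposition~\ref{pro:1-1FPIproductSpaces} already exhibit this pattern.
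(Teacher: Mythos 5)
Your proof is correct and is exactly the argument the paper intends: the paper gives no separate proof of Theorem \ref{thm:BBM-multi-iparametrico}, saying only that ``the same idea'' as in Theorem \ref{thm:BBMbiparametrico} applies, and your telescoping along the chain $\x_0,\dots,\x_m$, the Tonelli collapse of the inert blocks, the fiberwise application of Theorem \ref{thm:BBM} on $I_i\subset\R^{n_i}$, and the final Jensen step are precisely that idea carried out for $m$ factors. One small remark: you correctly carry the factor $\ell(I_i)^{\delta_i}$, which is in fact missing from the displayed right-hand side of the theorem as printed (compare with $a_i(R)$ in Theorem \ref{thm:BBMbiparametrico}), so what you prove is the corrected form of the statement.
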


Going back to the simplest product case, namely the case of product of two cubes, in the next section we will prove the self-improving result in Theorem \ref{thm:AutomejorastrongccR}.

\section{Proofs part II: the  Biparameter  Poincaré-Sobolev  inequality  }\label{sec:Bi-parameterPSI}

The goal of this section is to prove Theorem \ref{thm:AutomejorastrongccR}, namely, for any $R\in \ccR$
\begin{equation}\label{eq:AutomejorastrongccR}
\left\|f-f_R\right \|_{L^{p^*}( \frac{w\, dx}{w(R)})}
\leq
C[w]_{A_{q,\ccR}}^{\frac{1}{p}} \left [ \ell(I_1) 
\left\|\nabla_1 f\right \|_{L^{p}( \frac{w\, dx}{w(R)})}+
\ell(I_2) 
\left\|\nabla_2 f\right \|_{L^{p}( \frac{w\, dx}{w(R)})}
 \right ],
\end{equation}
where 
$$\displaystyle \frac1p-\frac1{p^*}= \frac1{n}\frac1{q+\log [w]_{A_{q,\ccR}}}.$$

Since this result involves a weighted estimate for a weight in the class $A_{p,\ccR}$, let us first introduce its obvious definition adapted to the geometry of the basis $\ccR$. For  a weight $w$ in $\mathbb{R}^{n_1}\times\mathbb{R}^{n_2}$, $n=n_1+n_2$, we will say that $w \in A_{p,\ccR}$ if
\begin{equation}\label{eq:Ap-ccR}
[w]_{A_{p, \ccR}}:=\sup_{R \in \ccR} 
\left( \frac{1}{|R|}\int_R w(x)\,dx\right) \left(\frac{1}{|R|}\int_R w(x)^{-\frac{1}{p-1}}\,dx \right)^{p-1}< \infty. 
\end{equation}

and in the case $p=1$, for a finite constant $c$
\begin{equation}\label{eq:A1-ccR}
\frac{1}{|R|}\int_R w(x)\,dx \leq c\, \inf_R w    \qquad R \in \ccR
\end{equation}
and the smallest of the constants $c$  is denoted by \,$[w]_{A_{1, \ccR}}$.

The strong  $A_{\infty,\ccR}$  class is defined in the same way as in the cubic or strong case and it enjoys the same geometric conditions,  
$$ A_{\infty,\ccR}=\bigcup_{p>1}A_{p, \ccR}. $$

We emphasize here that in the context of $\ccR$, the main difficulty is not in the self-improving method. 
The cube-product structure does not get affected by dyadic decompositions, so the standard procedures can be used to obtain Calder\'on--Zygmund coverings and many consequences of C--Z. In fact an inspection of the proof of Theorem \ref{thm:AutomejorastrongcR}, Theorem \ref{thm:PoSo-Aq-diam-delta} and Theorem \ref{thm:PoSo-Aq-diam-delta-weak} will show that the dyadic analysis will produce the analogous results for the basis $\ccR$. So we leave to the interested reader to check the details to prove the following claim.

\begin{claim}\label{cla:ccR-versions} 
There are analogous versions of  Theorem \ref{thm:AutomejorastrongcR}, Theorem \ref{thm:AutomejoraweakcR}, Theorem \ref{thm:PoSo-Aq-diam-delta} and Theorem \ref{thm:PoSo-Aq-diam-delta-weak} in the context of $\ccR$ with the same hypothesis and conclusions with the obvious modifications.
\end{claim}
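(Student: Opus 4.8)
The strategy is to check that every tool used in Section \ref{sec:proofscR} to prove the four theorems for $\cR$ has a verbatim counterpart for $\ccR$; once these are recorded, the four proofs transcribe line by line with $\cR$ replaced by $\ccR$. The tools are: (i) a dyadic filtration on a fixed rectangle whose members remain in the basis, with parent/child measure ratio $2^{n}$ and along which the Lebesgue differentiation theorem holds, so that the local Calder\'on--Zygmund decomposition and the dyadic maximal function $M_R^d$ are available; (ii) the constant--eccentricity Lemma \ref{lem:eccentr}; (iii) the polynomial projection $P_R$ of Section \ref{sec:polynomials} together with the bound \eqref{eq:Linfty-PR} and the identity $P_{R'}P_R=P_R$ for $R'\subset R$; (iv) the Reverse H\"older inequality for the strong $A_\infty$ class (Theorem \ref{thm:Ainfty-RHI-cR}) and the Karagulyan/John--Nirenberg estimate built on it (Theorem \ref{thm:TeoremacR} and Corollary \ref{COROLARIOcR}); and (v) the smallness--preservation conditions \eqref{eq:SDp}, \eqref{eq:Dp}, which are now assumed for $\ccR$ by hypothesis.

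The only point that is not purely mechanical is fixing the right dyadic structure on $\ccR$. Given $R=I_1\times I_2\in\ccR$, we declare the level--$j$ dyadic descendants of $R$ to be the rectangles $I_1'\times I_2'$ with $I_1'$ a level--$j$ dyadic subcube of $I_1$ and $I_2'$ a level--$j$ dyadic subcube of $I_2$; that is, all sides are bisected simultaneously. Each such rectangle is again a product of two cubes, hence lies in $\ccR$, and the family $\mathcal{D}(R)$ of all of them is a nested filtration that partitions $R$ at every level, has parent/child measure ratio $2^{n}$, and differentiates $L^1(R)$. Consequently the local C--Z stopping time at any level $L>1$, the weak--$(1,1)$ bound for $M_R^d$, and the pointwise estimates \eqref{eq:CZ1}--\eqref{eq:notinOmega} hold unchanged; and since a level--$j$ descendant $\tilde R$ has $|\tilde R|=|R|2^{-jn}$ and $d(\tilde R)=d(R)2^{-j}$, the computation \eqref{eq:eccentricity-children} gives $e(\tilde R)=e(R)$, so Lemma \ref{lem:eccentr} holds for $\ccR$. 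It is precisely here that isotropic subdivision matters: a ``one factor at a time'' refinement would both leave $\ccR$ and change the eccentricity.

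The remaining tools need no new idea. The projection $P_R$, the bound \eqref{eq:Linfty-PR}, the optimality of $P_Rf$ and $P_{R'}P_R=P_R$ depend only on $R$ being a bounded set of positive measure, so Section \ref{sec:polynomials} applies verbatim. The strong $A_{\infty,\ccR}$ class, defined through the Fujii--Wilson constant exactly as in Section \ref{sec:weights}, satisfies the Reverse H\"older inequality with the same statement and proof as Theorem \ref{thm:Ainfty-RHI-cR}, and the $A_{q,\ccR}$ weights obey the geometric bound $(|E|/|R|)^{q}\le[w]_{A_{q,\ccR}}\,w(E)/w(R)$ for $E\subset R$. Feeding the C--Z decomposition of this paragraph and this RHI into the argument of \cite{CantoPerez} yields \eqref{eq:John-NirenbergcR-general}--\eqref{eq:John-NirenbergcR-Ainfty} and Corollary \ref{COROLARIOcR} for $\ccR$. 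Finally, the model functionals of Examples \ref{ex:modelexample}--\ref{ex:mainexample} and the key estimate of Lemma \ref{lem:p^*} transfer unchanged, as their proofs use only the constant--eccentricity lemma and the $A_{q,\ccR}$ geometric bound.

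With all of the above in place, the four proofs of Section \ref{sec:proofscR} run word for word. For the $\ccR$ version of Theorem \ref{thm:AutomejorastrongcR} one performs the local C--Z decomposition of $|f-P_Rf|/a(R)$ on $R\in\ccR$, splits $f-P_Rf=A_1+A_2+A_3$, bounds $A_1,A_2$ pointwise via \eqref{eq:notinOmega} and \eqref{eq:Linfty-PR}, controls the $L^{p}$ norm of $A_3$ with the $SD_{p,\ccR}^{s}(w)$ hypothesis, and closes the self--referential inequality $X\le\tilde cL+X\|a\|/L^{1/s}$ as in \eqref{eq:isolating-X} with $L=e\max\{\|a\|^s,1\}$; finiteness of $X$ is obtained as in \cite{PR-Poincare} by truncating $f$ and perturbing $a$ to $a+\varepsilon$, the only place where $A_{\infty,\ccR}$ enters. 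The $\ccR$ version of Theorem \ref{thm:AutomejoraweakcR} follows by combining the \cite{FPW98} scheme with the good--$\lambda$ inequality from the $\ccR$ version of Corollary \ref{COROLARIOcR}; and the $\ccR$ versions of Theorems \ref{thm:PoSo-Aq-diam-delta} and \ref{thm:PoSo-Aq-diam-delta-weak} follow from these two, using the $\ccR$ analogues of Lemma \ref{lem:p^*} and of \eqref{eq:p1*-importantEstimate} together with the choices $M=1+\log[w]_{A_{q,\ccR}}^{1/q}$ and $[w]_{A_{q,\ccR}}\le e^{q}$, respectively. Thus no step meets a genuine obstacle; the one place demanding care is the choice of dyadic structure described in the second paragraph.
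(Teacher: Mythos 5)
Your proposal is correct and follows exactly the route the paper intends: the authors justify Claim \ref{cla:ccR-versions} only by observing that the cube-product structure is preserved under dyadic subdivision so the dyadic analysis of Section \ref{sec:proofscR} transcribes, leaving the details to the reader, and your argument is precisely that inspection carried out (isotropic bisection keeping the children in $\ccR$ and the eccentricity constant, the projections $P_R$, the $A_{\infty,\ccR}$ reverse H\"older and John--Nirenberg estimates, and the unchanged C--Z self-improving scheme). No gap; you have simply made explicit the verification the paper delegates.
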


Hence, the real problem here is in to find a useful starting point and also to check appropriate $D_p$ or $SD_p^s$-like conditions for the involved functionals. The first problem was solved in Section \ref{sec:Prelim-ccR}, with two proofs for Lemma \ref{lem:(1,1)PI-ccR}. The second will be studied below, in Lemma \ref{lem:Dp-likeConditions-ccR}.

Regarding the starting point,  we have a sort of unweighted $(1,1)$-Poincar\'e inequality proved in Lemma \ref{lem:(1,1)PI-ccR} that can be used as a starting point for our self-improving method. In the same way as in \eqref{eq:HigherOrderPoincare-Weighted}, for a weight $w\in A_{p,\ccR}$ we can apply H\"{o}lder's inequality to \eqref{eq:suma} to obtain, for a rectangle $R=I_1\times I_2$, that
\begin{equation*}
\avgint_{R} |f - f_{R}|\le [w]_{ A_{p,\ccR}}^{\frac1p} \,\ell(I_1)\left\|\nabla_1f\right \|_{L^p\left ( \frac{w\, dx}{w(R)}\right )}
+ [w]_{ A_{p,\ccR}}^{\frac1p}\,\ell(I_2)\left\|\nabla_2f\right \|_{L^p\left (  \frac{w\, dx}{w(R)} \right )}.
\end{equation*}
We can define the functionals
\begin{equation}\label{eq:ai}
a_i(R):= [w]_{ A_{p,\ccR}}^{\frac1p}\,\ell(I_i)\left\|\nabla_if\right \|_{L^p\left (\frac{w\, dx}{w(R)} \right )},
\qquad 
i=1,2
\end{equation}
to obtain our bi-parameter starting point
\begin{equation}\label{eq:sum-StartingPoint}
\avgint_{R} |f - f_{R}|\le a_1(R)+a_2(R).
\end{equation}

The novelty here is that our functional $a$ is a \emph{sum} of two functionals with certain structure which is \emph{not} exactly as in \eqref{eq:Model-a(R)}. The presence of the sidelenght instead of the diameter makes the situation non standard, so we need to find what kind of geometric condition is satisfied. We summarize all those properties in the following lemma.

\begin{lemma}\label{lem:Dp-likeConditions-ccR}

Let $R\in \ccR$ and let $R=I_1\times I_2$, where  $I_1\subset \mathbb{R}^{n_1}$ and $I_2\subset \mathbb{R}^{n_2}$ are cubes. Also put $n=n_1+n_2$. For a weight $w\in A_q$ with $1\le q\le p<n$, consider the functional 
\begin{equation}\label{eq:general-sidelength-a(R)}
a(R)= \ell(I_j)^{\delta}\left(\frac{1}{w(R)} \int_R A(R,x)\,dx \right)^{1/p} \qquad j=1,2
\end{equation}
where $A(R,x)$ is nonnegative and increasing in $R$, that is, $R_1 \subset R_2$ implies that  $A(R_1,x)\leq A(R_2,x)$. 

Then  
\begin{enumerate}
\item $a\in SD_{p,\ccR}^{\frac{n}{\delta}}(w)$ with norm less or equal than 1.
\item Consider the Sobolev-type exponent $p^*_w$ defined by the usual condition
$\frac{1}{p} -\frac{1}{ p_w^*}=\frac{\delta}{nq}$. Then $a\in D_{p_w^{*},\ccR}(w)$ with $\|a\|_{D_{p_w^*,\ccR}}\leq 
[w]^\frac{\delta}{nq}_{A_{q,\ccR}}. $
\item  For  a given $M>1$,  consider now the Sobolev-type exponent $p^*_w$ defined by  
$\frac{1}{p} -\frac{1}{ p_w^*}=\frac{\delta}{nq}\frac1M$.  Then $a\in SD^s_{p_w^*,\ccR}(w)$ with  $s=\frac{ nM'}{\delta}$ and $\|a\|_{SD^s_{p_w^*,\ccR}(w)}\leq [w]_{A_{q,\cR}}^{\frac{\delta}{nqM} }$.
\end{enumerate}

\end{lemma}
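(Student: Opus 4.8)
The plan is to prove the three assertions of Lemma~\ref{lem:Dp-likeConditions-ccR} by adapting the computation carried out in Example~\ref{ex:modelexample} and Lemma~\ref{lem:p^*}, with the \emph{diameter} $d(R)$ replaced by the single sidelength $\ell(I_j)$. The crucial geometric observation that makes this work is the following: if $\{R_i\}_i$ is a family of pairwise disjoint dyadic subrectangles of a fixed $R=I_1\times I_2\in\ccR$, then each $R_i=I_1^i\times I_2^i$ has $\ell(I_j^i)=2^{-k_i}\ell(I_j)$ for some nonnegative integer $k_i$ (depending on the dyadic level of $R_i$ in the $j$-th factor), and moreover $\ell(I_j^i)^{n_j}\le |R_i|^{n_j/n}$ is \emph{not} what we want --- rather, the right substitute for the eccentricity identity \eqref{eq:eccentricity-children} is that $\ell(I_j^i)^{n} = \ell(I_j^i)^{n_1}\,\ell(I_j^i)^{n_2}$ and, since the two factors scale independently, $|R_i| = |I_1^i|\,|I_2^i| = \ell(I_1^i)^{n_1}\ell(I_2^i)^{n_2}$. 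The point is that $\ell(I_j)^{\delta}$ is no longer comparable to a power of $|R|$, but we only ever need an \emph{upper} bound of the form $\ell(I_j^i)^{\delta p}\le \ell(I_j)^{\delta p}\bigl(|R_i|/|R|\bigr)^{?}$ --- and here the correct exponent comes out of the dyadic scaling, not the eccentricity.

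More concretely, first I would handle (1). Fix $j$, write $\ell_i:=\ell(I_j^i)$ and $\ell:=\ell(I_j)$, and note $\ell_i = 2^{-m_i}\ell$ where $m_i$ is the number of dyadic subdivisions of the $j$-th factor, so that $|R_i|\le |R|\,2^{-m_i}$ (dividing the $j$-th factor alone already shrinks the measure by $2^{-m_i n_j}$, and $|R_i|/|R|\le 2^{-m_i n_j}\le 2^{-m_i}$ when $n_j\ge 1$). Hence $\ell_i^{\delta}\le \ell^{\delta}\bigl(|R_i|/|R|\bigr)^{\delta/n_j}\le \ell^\delta(|R_i|/|R|)^{\delta/n}$ --- wait, this needs care because it is the \emph{other} inequality we will use; the honest route is to mimic Example~\ref{ex:modelexample} verbatim: choose $t>1$ with $t\delta p/n_j>1$, write
\begin{equation*}
\sum_i a(R_i)^p w(R_i) = \sum_i \ell_i^{\delta p}\int_{R_i}A(R_i,x)\,dx,
\end{equation*}
apply H\"older with exponents $t,t'$ to split $\sum_i \ell_i^{\delta p}\bigl(\int_{R_i}A(R_i,\cdot)\bigr)$, use monotonicity of $A(\cdot,x)$ and disjointness to bound $\bigl(\sum_i(\int_{R_i}A(R_i,\cdot))^{t'}\bigr)^{1/t'}\le \int_R A(R,\cdot)$, and bound $\bigl(\sum_i \ell_i^{t\delta p}\bigr)^{1/t}$. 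For the last sum, since $\ell_i^{n_j}\le |R_i|$ (as $|R_i|=\ell_i^{n_j}|I_{3-j}^i|^{n_{3-j}}$ and the other factor is $\le \ell(I_{3-j})^{n_{3-j}}$... this requires the other factor not to have been subdivided, which need not hold) --- so the cleanest bound is $\ell_i^{n}\le \ell^{n_{3-j}}\,|R_i|^{?}$; I expect the correct statement is simply $\ell_i^{\delta p}\le \ell^{\delta p}\,(|R_i|/|R|)^{\delta p/n}$ \emph{fails} in general and instead one gets $s=n/\delta$ by the same bookkeeping as in Example~\ref{ex:modelexample} because $\sum_i\ell_i^{t\delta p}\le \ell^{t\delta p - n}\sum_i\ell_i^{n}\le\ell^{t\delta p-n}\sum_i|R_i|$, using $t\delta p>n$ and $\ell_i\le\ell$, which gives exactly the factor $(|\bigcup R_i|/|R|)^{\delta p/n}$ after collecting powers. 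This is the main technical point and the place where a careless argument breaks.

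Given (1), parts (2) and (3) follow by \emph{exactly} the argument of Lemma~\ref{lem:p^*}: one replaces the eccentricity identity with the elementary bound $\ell(I_j^i)\le\ell(I_j)$ together with $\ell(I_j^i)^{n_j}\le|R_i|$ wherever a power of $|R_i|$ is needed to absorb the $w(R_i)^{-\theta}$ factor via the $A_{q,\ccR}$ geometric inequality $(|E|/|R|)^q\le [w]_{A_{q,\ccR}}\,w(E)/w(R)$. For (2) (the $M=1$ case, $D_p$ condition), after applying H\"older to $\sum_i\mu(R_i)^{p_w^*/p}|R_i|^{\delta p_w^*/n\cdot(q-1)/q}$ one collapses the $\mu$-sum by disjointness and the $|R_i|$-sum trivially since $\sum_i|R_i|\le|R|$, producing the constant $[w]_{A_{q,\ccR}}^{\delta/(nq)}$ with no smallness gain; for (3) one keeps the smallness factor $(|\bigcup R_i|/|R|)^{\delta/(nM')}$ exactly as in \eqref{eq:Keyestimate}, with $s=nM'/\delta$ and constant $[w]_{A_{q,\ccR}}^{\delta/(nqM)}$. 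In all three cases the only genuinely new ingredient over \cite{PR-Poincare} and the model examples is the substitution of $\ell(I_j)$ for $d(R)$, and the verification that $\ell(I_j^i)$ scales by powers of $2$ under dyadic refinement so that $\sum_i \ell(I_j^i)^{\lambda}$ is controlled by $\ell(I_j)^{\lambda-n}\sum_i|R_i|$ for any $\lambda>n$ --- I expect \emph{this} scaling bookkeeping, rather than any weight estimate, to be the main obstacle, since it is the only place where the product-of-cubes geometry of $\ccR$ genuinely differs from the cubic case.
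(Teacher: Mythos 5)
Your overall skeleton (H\"older with exponents $t,t'$, monotonicity of $A(\cdot,x)$ plus disjointness to collapse the inner sum, and the $A_{q,\ccR}$ geometric inequality for parts (2)--(3)) is the right one, but the scaling input you feed into it is wrong, and it is precisely the one piece of content that the proof actually requires. The dyadic subrectangles of $R=I_1\times I_2$ in $\ccR$ are \emph{not} obtained by refining the two cube factors independently: a dyadic descendant at generation $k$ is $I_1^i\times I_2^i$ with $\ell(I_1^i)=2^{-k}\ell(I_1)$ and $\ell(I_2^i)=2^{-k}\ell(I_2)$ \emph{simultaneously} --- this is what the Calder\'on--Zygmund decomposition of $R$ produces, exactly as in the cubic case. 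Consequently the relation you dismiss as failing in general, namely $\ell(I_j^i)^{\delta}=\ell(I_j)^{\delta}\bigl(|R_i|/|R|\bigr)^{\delta/n}$, is in fact an \emph{identity}: setting $E(R)=\ell(I_2)^{n_2}/\ell(I_1)^{n_2}$ one has $\ell(I_1)^nE(R)=|R|$ and $E(R_i)=E(R)$ for every dyadic descendant, whence $\ell(I_1^i)^n/|R_i|=\ell(I_1)^n/|R|$ (and symmetrically for $j=2$). This is the exact analogue of the eccentricity invariance of Lemma~\ref{lem:eccentr}, and with it the computations of Example~\ref{ex:modelexample}, of \eqref{eq:p1*-importantEstimate}, and of Lemma~\ref{lem:p^*} go through line by line with $d(\cdot)^{\delta}/e(R)^{\delta}$ replaced by $\ell(I_j^{\cdot})^{\delta}$; that is all that is needed.

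The substitute you propose instead, $\ell(I_j^i)^{n}\le|R_i|$ (equivalently $\ell(I_j^i)\le\ell(I_{3-j}^i)$), is simply false whenever the $j$-th factor is the longer one, e.g.\ $R=[0,2]^{n_1}\times[0,1]^{n_2}$ with $j=1$ and $R_i=R$; the same objection applies to the bound $\ell(I_j^i)^{n_j}\le|R_i|$ that you invoke for parts (2) and (3). Moreover, even granting it, your bookkeeping for (1) does not close: from $\sum_i\ell_i^{t\delta p}\le\ell^{t\delta p-n}\sum_i|R_i|$ you obtain $\bigl(\sum_i\ell_i^{t\delta p}\bigr)^{1/t}\le\ell^{\delta p}\bigl(\sum_i|R_i|/\ell^{n}\bigr)^{1/t}$, and to recombine this into $\ell^{\delta p}\bigl(|\bigcup_iR_i|/|R|\bigr)^{\delta p/n}$, as required to reconstitute $a(R)^{p}w(R)$, you would need both $1/t=\delta p/n$ (which contradicts your requirement $t\delta p>n$) and $\ell(I_j)^{n}=|R|$, i.e.\ $R$ a cube. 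So the gap is genuine and located exactly where you predicted the main obstacle would be; once you replace your scaling estimates by the invariance of $E(R)$ and the resulting identity $\ell(I_j^i)^{\delta}/\ell(I_j)^{\delta}=(|R_i|/|R|)^{\delta/n}$, the remainder of your outline for all three parts is correct.
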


\begin{proof}

The arguments are very similar to those given before, so we will omit some of details of the proof, but mention the key idea behind them. As in  all the previous cases, it is crucial to quantify the eccentricity of the objects that we are dealing with. Motivated by the fact that $|R|=\ell(I_1)^{n_1}\ell(I_2)^{n_2}$, we will define an \emph{eccentricity related} quantity as
\begin{equation*}
E(R)=\frac{\ell(I_2)^{n_2}}{\ell(I_1)^{n_2}}.
\end{equation*}
This is equivalent to the expression 
$$
\ell(I_1)^n.E(R)=|R|.
$$
In other words, the quantity $E(R)$ reflects the relation between the volume of the $n$-dimensional cube built from the side length $\ell(I_1)$ with respect to the actual measure of the rectangle $R$. 
The crucial property here is that this quantity is invariant with respect to dyadic children from $R$, namely 
as in Lemma \ref{lem:eccentr}, $E(\tilde{R})=E(R)$ for any $\tilde{R}$ dyadic descendant of $R \in \ccR$

With this idea in mind and using the quantity $E(R)$, the proof of (1) is a variation of the argument given in Example \ref{ex:modelexample}, the proof of (2) can be translated from the proof of \eqref{eq:p1*-importantEstimate} and finally the proof of (3) is an appropriate variation of  the proof of Lemma \ref{lem:p^*}.

Indeed, consider a family of disjoint dyadic subrectangles from $R\in \ccR$ denoted by $\{R_i\}$. Of course, each subrectangle is of the form $R_i=I_i\times J_i$ where $I_i$ and $J_i$ are dyadic subcubes of $I$ and $J$ respectively.
\end{proof}

\begin{remark}\label{rem:SumOfTwo} To be able to apply our main Theorem, the easiest way to proceed is to note that any $D_p$-like condition enumerated in Lemma \ref{lem:Dp-likeConditions-ccR}, is preserved when summing two functionals. This can be verified as follows (for the unweighted $D_{p,\ccR}$ as an example, the other situations are similar). Let $\{R_i\}_i$ be a family of pairwise disjoint dyadic subrectangles of $R=I_1\times I_2.$ Then, using Minkowsky inequality,
\begin{eqnarray*}
\left(\sum_i (a_1(R_i)+ a_2(R_i))^p \frac{|R_i|}{|R|}\right)^\frac{1}{p}&\le &\left(\sum_i a_1(R_i)^p \frac{|R_i|}{|R|}\right)^\frac{1}{p}  \\
&&+ \left(\sum_i a_2(R_i)^p \frac{|R_i|}{|R|}\right)^\frac{1}{p}\\
&\le & \|a_1\|_{D_{p,\ccR}} a_1(R) + \|a_2\|_{D_{p,\ccR}} a_2(R)\\
&\le &C\left (a_1(R) + a_2(R)\right ),
\end{eqnarray*}
with $C=\max\left \{\|a_1\|_{D_{p,\ccR}},\|a_2\|_{D_{p,\ccR}}\right \} $. 
\end{remark}

We can now proceed to present the proof of Theorem \ref{thm:AutomejorastrongccR}. 

\begin{proof}[Proof of Theorem \ref{thm:AutomejorastrongccR}]

Suppose first that $w$ is a weigh such that $[w]_{A_{q,\ccR}} \geq e^q$. Since the functionals from \eqref{eq:ai} namely
\begin{equation*}
a_i(R):= [w]^{\frac1p}_{ A_{p,\ccR}}\ell(I_i)\left\|\nabla_if\right \|_{L^p\left (\mu\right )},
\qquad 
i=1,2
\end{equation*}
satisfy a $ SD_{p_w^*,\ccR}^s$ condition given in Lemma \ref{lem:Dp-likeConditions-ccR} (with $\delta=1$), (3), the same holds for 
\begin{equation*}
a(R):= a_1(R)+a_2(R).
\end{equation*}
Hence, we can apply the $\ccR$-version of Theorem \ref{thm:PoSo-Aq-diam-delta} to this functional  to obtain the proof with a uniform bound using the assumption on $w$. More precisely, we obtain that 
\begin{equation*}
\left\|f-f_R\right \|_{L^{p^*}( \frac{w\, dx}{w(R)})}
\leq c_{p,q}
[w]_{A_{p,\ccR}}^{\frac{1}{p}} \left [ \ell(I_1) 
\left\|\nabla_1 f\right \|_{L^{p}( \frac{w\, dx}{w(R)})}+
\ell(I_2) 
\left\|\nabla_2 f\right \|_{L^{p}( \frac{w\, dx}{w(R)})}
 \right ].
\end{equation*}


Now suppose that we are dealing with a trivial weight (such as $[w]_{A_{q,\ccR}}=1$) or more generally with flat weights such that $[w]_{A_{q,\ccR}}\le e^q$. To avoid the blowup, we can apply the $\ccR$-version of Theorem \ref{thm:AutomejoraweakcR} to get the weak inequality
\begin{equation*}
\| f-f_R\|_{L^{p^*_w,\infty}\big( R, \frac{w\,dx}{w(R)}\big)} \le
c\, p^*_{1}\,e^q\, e^{ \frac{1}{n}  } \, a(R),
 \end{equation*}
where $ \frac{1}{p} -\frac{1}{ p_1^*}=\frac{1}{nq}$.
To finish with the proof, we can use a bi-parameter version of the truncation method to jump to the strong bound (see Appendix \ref{sec:App-BiParameterTruncation}).
\end{proof}

\section{Proofs part III: the local Biparameter  Fractional Poincaré-Sobolev  inequality  }
\label{sec:Bi-parameterFractionalBBMPSI}

 In this section we will provide a proof of Theorem \ref{thm:AutomejorastrongBBM-A1-ccR}. We use the main argument from Section \ref{sec:Bi-parameterPSI} for proving Theorem \ref{thm:AutomejorastrongccR} with a different functional $a(R)$ given by Theorem  \ref{thm:BBMbiparametrico}.

\begin{proof}[Proof of Theorem \ref{thm:AutomejorastrongBBM-A1-ccR}]

From Theorem \ref{thm:BBMbiparametrico}  with $p_1=p_2=p$ we have 
\begin{equation*}
\avgint_{R} |f - f_{R}|\leq A+B
\end{equation*}
where 
\begin{equation*}
A\leq c_{n_1} 
(1-\delta)^{\frac1p}\ell(I_1)^{\delta} \left(\avgint_{R}\int_{ I_1}  \frac{|f(x_1,x_2)- f(y_1,x_2)|^p}{|x_1-y_1|^{n_1+p\delta}}\, dy_1 dx_1\  dx_2\right)^{\frac1p}
\end{equation*}
and 
\begin{equation*}
B\leq c_{n_2} \, 
(1-\delta)^{\frac1p} \ell(I_2)^{\delta}
\left(\avgint_{R}  \int_{I_2} \frac{|f(y_1, x_2) - f(y_1, y_2) |^p}{|x_2-y_2|^{n_2+p\delta}}\ dy_1 dx_2 dy_2.\right)^{\frac1p}
\end{equation*}

Now, recalling the definition of $A_{1, \ccR}$ given in \eqref{eq:A1-ccR}: 
\begin{equation*}
\frac{1}{|R|}\int_R w(x)\,dx \leq [w]_{A_{1, \ccR}}\, \inf_R w    \qquad R \in \ccR
\end{equation*}
we have, 
$$
A\leq c_{1}\,[w]_{A_{1, \ccR}}^{\frac1p}\, (1-\delta)^{\frac1p}\ell(I_1)^{\delta}
\left( \frac{1}{w(R)} \int_{R} \int_{ I_1}  \frac{|f(x_1,x_2)- f(y_1,x_2)|^p}{|x_1-y_1|^{n_1+p\delta}}\, w(x_1,x_2) dx_1  dx_2\ dy_1\right)^{\frac1p}.
$$

Similarly we have for $B$
$$
B\leq c_{2}\,[w]_{A_{1, \ccR}}^{\frac1p}\, (1-\delta)^{\frac1p}\ell(I_2)^{\delta} 
\left( \frac{1}{w(R)} \int_{R} \int_{ I_2}  \frac{|f(y_1,x_2)- f(y_1,y_2)|^p}{|x_2-y_2|^{n_2+p\delta}}\, w(y_1,y_2)\,  dy_1dy_2\ dx_2  \right)^{\frac1p}
$$
Let 
$$
a_1(R)= c_{1}\,[w]_{A_{1, \ccR}}^{\frac1p}\, (1-\delta)^{\frac1p}\ell(I_1)^{\delta} 
\left( \frac{1}{w(R)} \int_{R} \int_{ I_1}  \frac{|f(x_1,x_2)- f(y_1,x_2)|^p}{|x_1-y_1|^{n_1+p\delta}}\, w(x_1,x_2) dx_1  dx_2\ dy_1\right)^{\frac1p}.
$$
and 
$$
a_2(R) =c_{2}\,[w]_{A_{1, \ccR}}^{\frac1p}\, (1-\delta)^{\frac1p}\ell(I_2)^{\delta} 
\left( \frac{1}{w(R)} \int_{R} \int_{ I_2}  \frac{|f(y_1,x_2)- f(y_1,y_2)|^p}{|x_2-y_2|^{n_2+p\delta}}\, w(y_1,y_2)\,  dy_1dy_2\ dx_2  \right)^{\frac1p}
$$
Observe that each  functional $a_j$ is of the form \eqref{eq:general-sidelength-a(R)} since each of the inner integrand  of the functional, $A(R,x)$ is increasing in $R$,  
$$R_1=I_1\times I_2 \subseteq R_2= J_1\times J_2 \quad \iff  \quad I_1\subseteq J_1 \text{ and } I_2 \subseteq J_2$$
and we can apply  Lemma \ref{lem:Dp-likeConditions-ccR}.  Then if $M>1$,   $a_j \in SD^s_{p_w^*,\ccR}(w)$ with  $s=\frac{ nM'}{\delta}$ and $\|a\|_{SD^s_{p_w^*,\ccR}(w)}\leq [w]_{A_{q,\cR}}^{\frac{\delta}{nqM} }$.

Since the functionals from \eqref{eq:general-sidelength-a(R)} satisfy the $ SD_{p_w^*,\ccR}^s$ condition given in Lemma \ref{lem:Dp-likeConditions-ccR}, (3), namely 
\begin{equation*}
\frac{1}{p} -\frac{1}{ p_w^*}=\frac{\delta}{n}\frac1M.
\end{equation*}

\begin{equation*}
\left( \sum_{i}a_j(R_i)^{ p^*_{w} }\,\frac{w(R_i)}{w(R)}\right)^{\frac{1}{p^*_{w}}}  \leq  [w]_{A_{1,\ccR}}^{ \frac{ \delta}{nM} }
\,\left (  \frac{|\bigcup_iR_i|}{|R|}   \right )^{ \frac{\delta}{nM'}  } a(R)  \qquad j=1,2
\end{equation*}
and hence the corresponding result holds for 
\begin{equation*}
a(R):= a_1(R)+a_2(R),
\end{equation*}
more precisily 
\begin{equation*}
\left( \sum_{i} (a(R_i))^{ p^*_{w} }\,\frac{w(R_i)}{w(R)}\right)^{\frac{1}{p^*_{w}}}  \leq 2 [w]_{A_{1,\ccR}}^{ 
\frac{ \delta } {nM} 
}
\,\left (  \frac{|\bigcup_iR_i|}{|R|}      \right )^{ \frac{\delta}{nM'}  } a(R)  \qquad j=1,2
\end{equation*}

Now we are in a position to finish the proof  exactly as of Theorem \ref{thm:AutomejorastrongccR}. We omit the details.

\end{proof}

\appendix

\section{The truncation method} \label{sec:App-Truncation}

We include here for completeness the different truncation arguments that we used in our results.

\subsection{The classical weak implies strong for the gradient}\label{sec:App-ClassicalTruncation}

We include here the classical truncation argument for Lipschitz functions related to the functional defined by the integral of the gradient. 
The general idea of truncation arguments in the context of Poincar\'e-Sobolev inequalities
is well known, classical references are \cite{Mazya-Sobolev} and \cite{BCLSC}.

\begin{lemma}\label{lem:App-Classic-Truncation}
Let $g:   \mathbb{R}^{n}\to\R$ be any nonnegative Lipschitz function. Suppose that for the pair $1\le q<p$ there is a weak 
type estimate for a pair of measures $\nu, \mu$ of the form:
\[
\|g\|_{L^{p,\infty}_{\mu}}
\lesssim\left (\int_{\R^{n}} |\nabla g|^q\, d\nu\right )^\frac{1}{q}.
\]
Then the strong estimate also holds, namely
\[
\|g\|_{L^p_{\mu}} \lesssim\left (\int_{\R^{n}} |\nabla g|^q\, d\nu\right )^\frac{1}{q}.
\]
\end{lemma}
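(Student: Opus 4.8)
The plan is to use the standard dyadic truncation of the Lipschitz function $g$ at geometrically spaced levels, apply the weak-type hypothesis to each truncation, and sum. Fix a rectangle (or simply work globally); set $a_k = 2^k$ for $k \in \mathbb{Z}$ and define the truncated function
\begin{equation*}
g_k := \min\{a_k, \max\{g - a_{k-1}, 0\}\} = \begin{cases} 0 & g \le a_{k-1} \\ g - a_{k-1} & a_{k-1} < g \le a_k \\ a_k - a_{k-1} & g > a_k, \end{cases}
\end{equation*}
which is again Lipschitz with $\nabla g_k = \nabla g \cdot \chi_{\{a_{k-1} < g \le a_k\}}$ almost everywhere, and the sets $E_k := \{a_{k-1} < g \le a_k\}$ are pairwise disjoint. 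First I would record the elementary pointwise bound: on the level set $\{g > a_k\}$ one has $g_k \equiv a_k - a_{k-1} = a_{k-1} = 2^{k-1}$, hence
\begin{equation*}
\mu(\{g > a_k\}) \le \mu(\{g_k \ge 2^{k-1}\}) \le \frac{\|g_k\|_{L^{p,\infty}_\mu}^p}{2^{(k-1)p}}.
\end{equation*}

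Next I would apply the weak-type hypothesis to each $g_k$, obtaining $\|g_k\|_{L^{p,\infty}_\mu}^q \lesssim \int_{E_k} |\nabla g|^q \, d\nu$. Combining with the previous display gives $2^{kp}\mu(\{g > a_k\}) \lesssim \big(\int_{E_k}|\nabla g|^q\,d\nu\big)^{p/q}$. Then I would write the strong norm as a sum over dyadic annuli,
\begin{equation*}
\|g\|_{L^p_\mu}^p = \int_{\mathbb{R}^n} g^p\, d\mu \le \sum_{k \in \mathbb{Z}} a_k^p \, \mu(\{a_{k-1} < g \le a_k\}) \le \sum_{k\in\mathbb{Z}} 2^{kp}\, \mu(\{g > a_{k-1}\}) \lesssim \sum_{k\in\mathbb{Z}} \Big(\int_{E_{k-1}} |\nabla g|^q\, d\nu\Big)^{p/q}.
\end{equation*}
Since $p/q \ge 1$, the superadditivity inequality $\sum_k t_k^{p/q} \le \big(\sum_k t_k\big)^{p/q}$ for nonnegative $t_k$ applies, and using that the $E_k$ are disjoint we bound the last sum by $\big(\sum_k \int_{E_k}|\nabla g|^q\,d\nu\big)^{p/q} \le \big(\int_{\mathbb{R}^n}|\nabla g|^q\,d\nu\big)^{p/q}$. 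Taking $p$-th roots yields the claimed strong estimate.

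The only genuinely delicate point is bookkeeping rather than conceptual: one must be careful that the implicit constant in the weak-type hypothesis is uniform in $k$ (it is, since it does not depend on the function), and that the telescoping/annulus decomposition of $\int g^p\,d\mu$ is done with the correct powers of $2$ so that no divergent factor is introduced — this is why the geometric spacing $a_k = 2^k$ is essential and why the gain $g_k \equiv 2^{k-1}$ on $\{g > a_k\}$ is exactly what makes the sum close. I expect the main obstacle, in a fully detailed write-up, to be simply verifying that the truncations $g_k$ are admissible test functions for the hypothesis (Lipschitz, with the stated almost-everywhere gradient identity), which follows from the chain rule for compositions of Lipschitz functions with the piecewise-linear map $t \mapsto \min\{a_k,\max\{t-a_{k-1},0\}\}$, together with the observation that the weak-type inequality is assumed for \emph{all} nonnegative Lipschitz functions, so no further regularity check is needed.
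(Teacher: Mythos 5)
Your proof is correct and follows essentially the same route as the paper's: truncate $g$ at dyadic levels (your $g_k$ is the paper's $T_{k-1}g$ up to an index shift), apply the weak-type hypothesis to each truncation, and close the sum using the disjointness of the sets $E_k$ together with the superadditivity $\sum_k t_k^{p/q}\le\left(\sum_k t_k\right)^{p/q}$. The only blemish is notational: the formula $\min\{a_k,\max\{g-a_{k-1},0\}\}$ truncates at height $a_k$ rather than $a_k-a_{k-1}=2^{k-1}$ and so does not match your (correct) piecewise definition, which is what the rest of the argument actually uses; it should read $\min\{a_k-a_{k-1},\max\{g-a_{k-1},0\}\}$.
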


\begin{proof}
Consider the usual truncation of a non negative function $g$ at level $2^k$ given by $T_k(g)$ defined by:
\begin{equation*}
T_kg(x):=
\left \{\begin{array}{cc}
0& \text{ if } g(x)\le 2^k\\
g(x) - 2^k& \text{ if } 2^k< g(x) < 2^{k+1}\\
2^k & \text{ if } g(x)\ge 2^{k+1}.\\
\end{array}
\right .
\end{equation*}
Also define for each $k\in \mathbb Z$ the set $G_k:=\{x\in \mathbb{R}^n:  2^k< g(x)\le 2^{k+1}\}$.
We have that, for all $x\in G_{k+1}$, $T_kg(x)=2^k$ and $\text{sop}\nabla(T_kg)\subset G_k$. We proceed as follows:
\begin{eqnarray*}
\int_{\mathbb{R}^n}g^p(x) d\mu  &\lesssim &\sum_{k=-\infty}^{k=\infty} 2^{kp}\mu(G_{k+1})\\
& \lesssim  & \sum_{k=-\infty}^{k=\infty} 2^{(k-1)p}\mu( T_kg(x)>2^{k-1})\\
& \lesssim  & \sum_{k=-\infty}^{k=\infty} \left (2^{k-1}\mu( T_kg(x)>2^{k-1})^\frac{1}{p}\right )^{p}.
\end{eqnarray*}
Since $T_kg$ is still a Lipschitz function, we can use the hypothesis and get
\begin{eqnarray*}
\int_{\mathbb{R}^n}g^p(x) d\mu  &\lesssim &\sum_{k=-\infty}^{k=\infty} \left (\int_{\R^{n}} |\nabla T_kg|^q\, d\nu\right )^\frac{p}{q}\\
&\lesssim &\sum_{k=-\infty}^{k=\infty} \left (\int_{G_k} |\nabla T_kg|^q\, d\nu\right )^\frac{p}{q}\\
&\lesssim &\sum_{k=-\infty}^{k=\infty} \left (\int_{G_k} |\nabla g|^q\, d\nu\right )^\frac{p}{q}\\
&\lesssim & \left (\int_{\mathbb{R}^n} |\nabla g|^q\, d\nu\right )^\frac{p}{q}
\end{eqnarray*} 
using that $\frac{p}{q}>1$ and the disjointness of the family $\{G_k\}$.
\end{proof}

\subsection{Truncation in the bi-parameter setting}
\label{sec:App-BiParameterTruncation}

\begin{lemma}\label{lem:App-Bi-parameter-Truncation}
Let $g:   \mathbb{R}^{n_1}\times \mathbb{R}^{n_2} \to\R$ be any nonnegative Lipschitz function. Suppose that for $p\ge1$ there is a weak 
$(q,p)$-type estimate for a pair of measures $\nu, \mu$ of the form:
\[
\|g\|_{L^{p,\infty}_{\mu}}
\lesssim\left (\int_{\R^{n_1}\times\R^{n_2}} (|\nabla_1g|^q d\nu\right )^{1/q}+\left (\int_{\R^{n_1}\times\R^{n_2}} (|\nabla_2g|^q d\nu\right )^{1/q}.
\]
Then the strong estimate also holds, namely
\[
\|g\|_{L^p_{\mu}}
\lesssim\left (\int_{\R^{n_1}\times\R^{n_2}} (|\nabla_1g|^q d\nu\right )^{1/q}+\left (\int_{\R^{n_1}\times\R^{n_2}} (|\nabla_2g|^q d\nu\right )^{1/q}.
\]
\end{lemma}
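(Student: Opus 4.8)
The plan is to transcribe, essentially verbatim, the one–parameter truncation argument of Lemma~\ref{lem:App-Classic-Truncation}, handling the two partial–gradient terms on the right-hand side separately but in exactly the same way. We keep implicit the assumption $q\le p$ contained in the phrase ``$(q,p)$-type estimate'' (as in the one–parameter statement); this is what makes the final summation work, and it holds in all our applications, where the weak estimate comes with a genuine Sobolev gain $p<p^*$.

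First I would introduce, for each $k\in\mathbb{Z}$, the dyadic truncation $T_k g$ at height $2^k$ defined exactly as in the proof of Lemma~\ref{lem:App-Classic-Truncation}, together with the annuli $G_k:=\{x:2^k<g(x)\le 2^{k+1}\}$. The two facts needed are: (i) $T_k g$ is again a nonnegative Lipschitz function on $\mathbb{R}^{n_1}\times\mathbb{R}^{n_2}$, so the weak hypothesis applies to it; and (ii) since $T_k g=\phi_k\circ g$ for a $1$-Lipschitz $\phi_k$ which is constant off the strip where $2^k<g<2^{k+1}$, the chain rule for Lipschitz functions together with the standard fact that $\nabla g=0$ a.e.\ on any level set of $g$ gives $\nabla_i(T_k g)=\chi_{G_k}\nabla_i g$ almost everywhere for $i=1,2$. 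In particular $|\nabla_i(T_k g)|^q\,d\nu$ is just the restriction of $|\nabla_i g|^q\,d\nu$ to $G_k$.

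Next I would run the usual layer–cake bound $\int g^p\,d\mu\lesssim\sum_k 2^{kp}\,\mu(G_{k+1})$ and observe that on $G_{k+1}$ one has $g\ge 2^{k+1}$, hence $T_k g=2^k$ there, so $G_{k+1}\subset\{T_k g>2^{k-1}\}$. Applying Chebyshev and then the weak $(q,p)$ hypothesis to $T_k g$, followed by $(a+b)^p\lesssim a^p+b^p$, gives
\[
2^{kp}\,\mu(G_{k+1})\lesssim\big(\|T_k g\|_{L^{p,\infty}_\mu}\big)^p\lesssim\Big(\textstyle\int_{G_k}|\nabla_1 g|^q\,d\nu\Big)^{p/q}+\Big(\textstyle\int_{G_k}|\nabla_2 g|^q\,d\nu\Big)^{p/q}.
\]
Summing over $k$ and using that $p/q\ge 1$ together with the pairwise disjointness of the $G_k$ (so that $\sum_k c_k^{p/q}\le(\sum_k c_k)^{p/q}$ for nonnegative $c_k$), I obtain
\[
\int g^p\,d\mu\lesssim\Big(\textstyle\int|\nabla_1 g|^q\,d\nu\Big)^{p/q}+\Big(\textstyle\int|\nabla_2 g|^q\,d\nu\Big)^{p/q},
\]
and taking $p$-th roots, using the subadditivity of $t\mapsto t^{1/p}$, yields the claimed strong inequality.

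I do not expect a serious obstacle: the argument is a line-by-line copy of the one–parameter case, the only genuinely bi-parameter point being the elementary observation~(ii) that each partial gradient commutes with truncation and has its support localized to $G_k$, so that the two terms on the right-hand side can be summed independently exactly as the single gradient term is in Lemma~\ref{lem:App-Classic-Truncation}. The one place to be careful is that the summation step requires $q\le p$ (equivalently $p/q\ge 1$), which is automatic in the intended applications.
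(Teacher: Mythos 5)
Your proposal is correct and is exactly the argument the paper has in mind: the paper's own proof of this lemma simply observes that the one-parameter truncation argument of Lemma \ref{lem:App-Classic-Truncation} carries over verbatim, which is precisely what you have written out, including the key point that $\nabla_i(T_kg)$ is supported in $G_k$ and equals $\nabla_i g$ there so the two partial-gradient terms can be summed independently. Your remark that the summation step needs $p/q\ge 1$ (implicit in the phrase ``$(q,p)$-type'' and present in the one-parameter statement as $q<p$) is a fair and accurate clarification.
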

\begin{proof}
This case is as easy as Lemma \ref{lem:App-Classic-Truncation}, since the class of Lipschitz functions enjoys the truncation property. The same proof works here as well.
\end{proof}

\section{The proof of Lemma \ref{lem:oneparameterFractNabla}}\label{sec:App:oneparameterFractNabla}

We include here the proof of Lemma \ref{lem:oneparameterFractNabla}, namely estimate \eqref{eq:roughfractionalPI}. We recall here that estimate:

\begin{equation}\label{eq:fractional_derivative}
\ell(Q)^\delta \avgint_Q\int_Q\frac{|f(x)-f(y)|}{|x-y|^{n+\delta }}dydx 
\lesssim_{n} 
\frac{1}{\delta(1-\delta)}
\ell(Q)  \avgint_Q |\nabla f|.
\end{equation}

\begin{proof}[Proof of Lemma \ref{lem:oneparameterFractNabla}]

We start by using the FTC to obtain a representation formula as follows

Using the FTC, for every $x,y\in Q$, 
\[
f(y)-f(x)=\int_0^1\nabla f(x+t(y-x))\cdot(y-x)dt.
\]

Then, for a fixed $x\in Q$, the inner integral in \eqref{eq:fractional_derivative} can be written as

\begin{eqnarray*}
\int_Q\frac{|f(x)-f(y)|}{|x-y|^{n+\delta }}dydx
& \leq  & \int_Q\int_0^1\frac{|\nabla f(x+t(y-x))|}{|x-y|^{n+\delta -1}}dtdydx\\
& = & \int_0^1\int_{Q\cap B(x,\sqrt{n}\ell(Q))} \frac{|\nabla f(x+t(y-x))|}{|x-y|^{n-(1-\delta)}}dydtdx\\
& = & I
\end{eqnarray*}
since $Q\subset B(x,\sqrt{n}\ell(Q))$ for any $x\in Q$. Now, we change variables putting $z=x+t(y-x)=(1-t)x+ty$. Then one has $|x-y|=|z-x|/t$ and the change of variables theorem  yields 
\begin{eqnarray*}
I & = & \int_0^1\int_{ ((1-t)x+tQ) \cap B(x,\sqrt{n}t\ell(Q))} \frac{|\nabla f(z)|}{|z-x|^{n-(1-\delta)}}\frac{t^{n-(1-\delta)}}{t^n}dzdtdx\\
& \leq & \int_0^1\int_{Q\cap B(x,\sqrt{n}t\ell(Q))} \frac{|\nabla f(z)|}{|z-x|^{n-(1-\delta)}}t^{-(1-\delta)}dzdtdx\\
&\le & \int_Q\int_{\frac{|z-x|}{\sqrt{n}\ell(Q)}}^1 \frac{dt}{t^{1-\delta}} \frac{|\nabla f(z)|}{|z-x|^{n-(1-\delta)}}dzdx
\end{eqnarray*}
by Fubini again. Going back to the beginning and putting all together we obtain
\begin{equation*}
 \frac{\ell(Q)^\delta }{\delta} \,  \avgint_Q\int_Q   \frac{|\nabla f(z)|}{|z-x|^{n-(1-\delta)}}dzdx
= \frac{\ell(Q)^\delta }{\delta} \,\avgint_Q|\nabla f(z)| \int_Q  \frac{dx}{|z-x|^{n-(1-\delta)}} dz.
\end{equation*}
Now we use that, for any Lebesgue measurable set $\Omega$ and $0<\alpha<n$, we have the estimate
\begin{equation*}\label{eq:lemita}
\int_\Omega  \frac{dx}{|z-x|^{n-\alpha}} \leq v_n^{-\alpha/n}\alpha^{-1}|\Omega|^{\alpha/n},\qquad \text{for all }z,
\end{equation*}
where $v_n$ is the volume of the unit ball of $\mathbb{R}^n$. We finally obtain that 
\begin{eqnarray*}
\ell(Q)^\delta \avgint_Q\int_Q\frac{|f(x)-f(y)|}{|x-y|^{n+\delta }}dydx & \le &
c_n \ell(Q)^\delta \frac{1}{\delta(1-\delta)}\, |Q|^\frac{1-\delta}{n} \, \avgint_Q|\nabla f(z)| \\
&= & \frac{c_n}{\delta(1-\delta)}\, \ell(Q) \, \avgint_Q|\nabla f(z)|.
\end{eqnarray*}
\end{proof}

\bibliographystyle{amsalpha}

%


\end{document}